\documentclass[12pt,reqno]{amsart}
\usepackage{amsmath,amsthm,amsfonts,amssymb,times}
\usepackage{verbatim}
\usepackage{url}
\setlength{\textheight}{220mm} \setlength{\textwidth}{155mm}
\setlength{\oddsidemargin}{1.25mm}
\setlength{\evensidemargin}{1.25mm} \setlength{\topmargin}{0mm}

\setbox0=\hbox{$+$}
\newdimen\plusheight
\plusheight=\ht0
\def\+{\;\lower\plusheight\hbox{$+$}\;}

\setbox0=\hbox{$-$}
\newdimen\minusheight
\minusheight=\ht0
\def\-{\;\lower\minusheight\hbox{$-$}\;}

\setbox0=\hbox{$\cdots$}
\newdimen\cdotsheight
\cdotsheight=\plusheight
\def\cds{\lower\cdotsheight\hbox{$\cdots$}}

\makeatletter
\def\leqalignno#1{\displ@y \tabskip\z@ plus\@ne fil
  \halign to\displaywidth{\hfil$\@lign\displaystyle{##}$\tabskip\z@skip
    &$\@lign\displaystyle{{}##}$\hfil\tabskip\z@ plus\@ne fil
    &\kern-\displaywidth\rlap{$\@lign\hbox{\rm##}$}\tabskip\displaywidth\crcr
    #1\crcr}}
\makeatother
\let\dotlessi=\i

\newcommand{\eb}{\begin{equation}}
\newcommand{\ee}{\end{equation}}

\newcommand{\df}{\dfrac}
\newcommand{\tf}{\tfrac}

\renewcommand{\d}{{\delta}}

\renewcommand{\(}{\left\(}
\renewcommand{\)}{\right\)}
\renewcommand{\[}{\left\[}
\renewcommand{\]}{\right\]}
\renewcommand{\i}{\infty}
\renewcommand{\pmod}[1]{\,(\textup{mod}\,#1)}
\numberwithin{equation}{section}
 \theoremstyle{plain}
\newtheorem{theorem}{Theorem}[section]
\newtheorem{lemma}[theorem]{Lemma}

\newtheorem{entry}[theorem]{Entry}
\newtheorem{conjecture}[theorem]{Conjecture}
\newtheorem{remark}[theorem]{Remark}
\newtheorem{definition}[theorem]{Definition}

\DeclareMathOperator{\Realp}{Re}
\DeclareMathOperator{\Imp}{Im}

\begin{document}
\title[Trigonometric and Bessel Series]
{Balanced Derivatives, Identities, and Bounds for Trigonometric and Bessel Series}
\author{Bruce C.~Berndt, Martino Fassina, Sun Kim, and Alexandru Zaharescu}
\address{Department of Mathematics, University of Illinois, 1409 West Green
Street, Urbana, IL 61801, USA} \email{berndt@illinois.edu}
\address{Fakult\"at f\"ur Mathematik, Universit\"at Wien, Oskar-Morgenstern-Platz 1, 1090 Wien, Austria}
\email{martino.fassina@univie.ac.at}
\address{Department of Mathematics, and Institute of Pure and Applied Mathematics, Jeonbuk National University, 567 Baekje-daero, Jeonju-si, Jeollabuk-do 54896, Republic of Korea}
\email{sunkim@jbnu.ac.kr}
\address{Department of Mathematics, University of Illinois, 1409 West Green
Street, Urbana, IL 61801, USA; Institute of Mathematics of the Romanian
Academy, P.O.~Box 1-764, Bucharest RO-70700, Romania}
\email{zaharesc@illinois.edu}

\begin{abstract}
Motivated by two identities published with Ramanujan's lost notebook and connected, respectively, with the Gauss circle problem and the Dirichlet divisor problem, in an earlier paper, three of the present authors derived representations for certain sums of products of trigonometric functions as double series of Bessel functions. These series are generalized in the present paper by introducing the novel notion of balanced derivatives, leading to further theorems.   As we will see below, the regions of convergence in the unbalanced case are entirely different than those in the balanced case. From this viewpoint, it is remarkable that Ramanujan had the intuition to formulate entries that are, in our new terminology, ``balanced''.  If $x$ denotes the number of products of the trigonometric functions appearing in our sums, in addition to proving the identities mentioned above, theorems and conjectures for upper and lower bounds for the sums as $x\to\infty$ are established.
\end{abstract}
\subjclass[2020]{Primary 11L03. Secondary 33C10; 11L07}
\keywords{Balanced derivatives; Bessel functions; Dirichlet divisor problem; Ramanujan's lost notebook; Trigonometric series}
\maketitle

\section{Introduction and main results}
In a series of papers \cite{besselII}, \cite{bessie4}, \cite{bessie3},
\cite{bessel2bbskaz}, \cite{bessel1} written by three of the present authors and J.~Li, they examined two formulas of Ramanujan in an unpublished fragment found in \cite[p.~335]{lnb}.  The two formulas are connected with the famous \emph{Gauss circle problem} and the equally famous \emph{Dirichlet divisor problem}.  Each of the two formulas has three distinct interpretations.  Ramanujan's formulas and the methods developed to prove them have generated further research, in particular, in \cite{bessie4} and \cite{bessel2bbskaz}. In this paper, we continue our study by examining ``balanced'' derivatives of the series representations and making applications to the trigonometric sums studied in \cite{bessel2bbskaz}.

In order to state Ramanujan's formulas, the \emph{Gauss circle problem}, and the \emph{Dirichlet divisor problem}, it is necessary to first define the relevant Bessel functions appearing in Ramanujan's identities.  Let $J_{\nu}(z)$ denote the ordinary Bessel function of order $\nu$.  Define
\begin{equation}\label{defI}
I_{\nu}(z) := -Y_{\nu}(z)-\df{2}{\pi}K_{\nu}(z),
\end{equation}
  where $Y_{\nu}(z)$ denotes the Bessel function of imaginary argument of order $\nu$ given by
\begin{equation}\label{b.Y}
Y_{\nu}(z):=\df{J_{\nu}(z)\cos(\nu\pi)-J_{-\nu}(z)}{\sin(\nu\pi)}, \quad |z|<\infty,
\end{equation}
and $K_{\nu}(z)$ denotes the modified Bessel function of order $\nu$ defined by
\begin{equation}\label{b.K}
K_{\nu}(z):=\df{\pi}{2}\,\df{e^{\pi{i}\nu/2}J_{-\nu}(iz)-e^{-\pi{i}\nu/2}J_{\nu}(iz)}{\sin(\nu\pi)},\quad -\pi<\arg z<\tf12\pi.
\end{equation}
If $\nu$ is an integer $n$, it is understood that we define
the functions by taking the limits as $\nu\to{n}$ in \eqref{b.Y}
and \eqref{b.K}.

We now describe the \emph{Gauss circle problem} and the \emph{Dirichlet divisor problem}.  Detailed discussions and references for these two famous problems can be found in \cite{monthly}.

  Let $r_2(n)$ denote the number of ways in which the positive integer $n$ can be expressed as a sum of two squares, where different orders and different signs of the summands are regarded as distinct representations of $n$ as a sum of two squares. For example, $5=(\pm2)^2+(\pm1)^2=(\pm1)^2+(\pm2)^2$, and so $r_2(5)=8$. Let
  \begin{equation*}\label{r2sum}
  R(x):={\sum_{0\leq n\leq x}}^{\prime}r_2(n),
  \end{equation*}
  where $r_2(0)=1$ and the prime $'$ indicates that, if $n=x$, only $\tf12r_2(x)$ is counted. Then
  \begin{equation}\label{b.circle}
R(x)=: \pi x +P(x) = \pi x +
\sum_{n=1}^{\i}r_2(n)\left(\df{x}{n}\right)^{1/2}J_1(2\pi\sqrt{nx}),
\end{equation}
where the representation for $P(x)$ on the far right side is due to Hardy and Ramanujan \cite[p.~265]{hardy1916}.
Finding the precise order of magnitude of $P(x)$, as $x\to\infty$, is known as the \emph{Gauss circle problem}.

 Throughout this paper, for arithmetic functions $f$ and $g$, let
 \begin{equation*}
 {\sum_{nm\leq x}}^{\prime} f(n)g(m):=\begin{cases}
\sum_{nm\leq x} f(n)g(m),\quad\quad \quad\quad \quad\quad \quad\quad \quad\,\text{ if $x$ is not an integer,}\\
\sum_{nm\leq x} f(n)g(m)-\frac{1}{2}\sum_{nm= x} f(n)g(m), \text{ if $x$ is an integer.}\\
\end{cases}
 \end{equation*}

   In \cite{bessie3}, three of the authors proved the following enigmatic identity of Ramanujan from his lost notebook \cite{lnb}.

\begin{entry}\cite[p.~335]{lnb}\label{besselseries} 
If $0 < \theta < 1$ and $x>0$, then
\begin{multline}\label{b1.1}
{\sum_{n\leq x}}^{\prime}\left[\frac{x}{n}\right]\sin(2\pi n\theta)= \pi
x\left(\dfrac{1}{2}-\theta\right) -\df{1}{4}\cot(\pi\theta)\\
+\frac{1}{2}\sqrt{x}\sum_{m=1}^{\i}\sum_{n=0}^{\infty}
\left\{\df{J_1\left(4\pi\sqrt{m(n+\theta)x}\right)}{\sqrt{m(n+\theta)}}
-\df{J_1\left(4\pi\sqrt{m(n+1-\theta)x}\right)}{\sqrt{m(n+1-\theta)}}\right\},
\end{multline}
where,  as customary, $[x]$ denotes the greatest integer less than or equal to $x$.
\end{entry}

The identity \eqref{b1.1} can be seen as a 2-variable analogue of \eqref{b.circle}. If we set $\theta=\tf14$, by an elementary theorem on $r_2(n)$ \cite[p.~313]{hardywright}, the left sides of \eqref{b.circle} and \eqref{b1.1} are identical.

Let $d(n)$ denote the number of positive divisors of the positive integer $n$, and let
\begin{equation*}\label{Dx}
   D(x):={\sum_{n\leq x}}^{\prime}d(n).
   \end{equation*}
 Then
  \begin{align}
D(x) =&x\left(\log x
+(2\gamma-1)\right)+\Delta(x)\label{elemdirichlet}\\=&x\left(\log x
+(2\gamma-1)\right)
+\frac{1}{4}+\sum_{n=1}^{\i}d(n)
\left(\df{x}{n}\right)^{1/2}I_1(4\pi\sqrt{nx}),\label{vor}
\end{align}
where $x>0$, $\gamma$ denotes Euler's constant, the identity \eqref{elemdirichlet} is due to Dirichlet \cite{dirichlet} and defines the ``error term'' $\Delta(x)$, $I_1(x)$ is defined in \eqref{defI},  and the series representation for $\Delta(x)$ on the  right side of \eqref{vor} is due to Vorono\"{\dotlessi} \cite{voronoi}. Finding the optimal bound for $\Delta(x)$ as $x\to\infty$ is the \emph{Dirichlet divisor problem}.
Vorono\"{\dotlessi} \cite{voronoi} proved that
\begin{equation}\label{d(n)bigO}
\Delta(x)=O(x^{1/3}\log x).
\end{equation}
The upper bound for $\Delta(x)$ given in \eqref{d(n)bigO} is not the best currently known.  See \cite{monthly} for a list of upper bounds that have been obtained for $\Delta(x)$.  Furthermore \cite{hardy1916}, \cite{ingham}, \cite[p.~130]{annals2},
\begin{equation}\label{d(n)omega}
\Delta(x)=\Omega_{\pm}(x^{1/4}),
\end{equation}
as $x\to\infty$. We say that $f(x)=\Omega_+(x^{\theta})$ if there exists a sequence $\{x_n\}\to\infty$ such that
$$f(x_n)\leq C_1 (x_n)^{\theta}$$
fails to hold for every positive constant $C_1$. Similarly,
$
f(x)=\Omega_-(x^{\theta})$
if there exists a sequence $\{x_n^{'}\}\to\infty$ such that
$$f({x_n^{'}})\geq -C_2(x_n^{'})^{\theta}$$
fails to hold for every positive constant $C_2$.

A proof of the following second enigmatic identity of Ramanujan from \cite[p.~335]{lnb} has been given by J.~Li and two of the present authors \cite{Paper2}. When $\theta=0$, the left-hand side of \eqref{b.c.11} below reduces to the left-hand side of \eqref{vor}.

 \begin{entry}\cite[p.~335]{lnb}\label{b.besselseries2}  For  $x>0$ and $0<\theta<1$,
\begin{align}\label{b.c.11}
&{\sum_{n\leq x}}^{\prime}\left[\frac{x}{n}\right]\cos(2\pi n\theta)=
\df{1}{4} -x\log(2\sin(\pi\theta)) \\
&+\df{1}{2}\sqrt{x}\sum_{m=1}^{\i}\sum_{n=0}^{\i}
\left\{\df{I_1\left(4\pi\sqrt{m(n+\theta)x}\right)}{\sqrt{m(n+\theta)}}
+\df{I_1\left(4\pi\sqrt{m(n+1-\theta)x}\right)}{\sqrt{m(n+1-\theta)}}\right\}.\notag
\end{align}
\end{entry}

\begin{remark}\label{keyremark} Examining \eqref{vor} and \eqref{b.c.11},  we observe  that ``big O" conjectures and theorems about the error term $\Delta(x)$, which traditionally and frequently involve the series of Bessel functions on the right-hand side of \eqref{vor}, pertain to the double series of Bessel functions on the right-hand side of \eqref{b.c.11}, although, as of the present, \eqref{b.c.11} has not been employed in deriving ``big O'' theorems.
\end{remark}

 In this paper we prove new identities in the spirit of Ramanujan, where on the left sides are sums of products of two trigonometric functions, while on the right sides are double series of Bessel functions. Our formulas involve two parameters $\sigma,\theta$ in the interval $(0,1)$ and stem from identities that three of the present authors proved in \cite{bessel2bbskaz}. The novelty in the current paper consists in the possibility of taking termwise derivatives with respect to $\sigma$ and $\theta$. Since we are only allowed (by reasons of convergence) to take the same number of derivatives in $\sigma$ and in $\theta$, we say that the identities thus obtained are ``balanced". As an interesting application, we consider two identities that were independently proved in \cite{bessel2bbskaz}, and we show that one of them can be obtained as the first balanced derivative of the other (see Section \ref{last}).

 In the second portion of the paper,  our goal is to derive ``big O'' and $\Omega$ theorems for sums of two trigonometric functions.  Unfortunately, in many cases, for lower bounds we are only able to make conjectures. We also extend our study of  sine sums
 $${\sum_{mn\leq x}}^{\prime}mn\sin(2\pi na/p)\sin(2\pi mb/q)$$
 to sums of $k$ sines, $k\geq 2$.

\section{Identities for Trigonometric Sums in Terms of Bessel Functions}
 Here is our first main result.
 \begin{theorem}\label{BI}
Let $\sigma,\theta$ be in the interval $(0,1)$, and let $x>0$. Then for every non-negative integer $k$,
\begin{equation}\label{ii}
\begin{split}
&\frac{\partial^{2k}}{\partial\sigma^{k}\partial\theta^{k}}
\Bigg{\{}{\sum_{mn\leq x}}^{\prime}\cos(2\pi m\sigma)\sin(2\pi n\theta)+\frac{\cot(\pi\theta)}{4}\Bigg{\}}\\=&\frac{\sqrt{x}}{4}\sum_{m=0}^{\infty}
\sum_{n=0}^{\infty}\frac{\partial^{2k}}{\partial\sigma^{k}\partial\theta^{k}}
\Bigg{\{} \frac{J_1(4\pi\sqrt{(m+\sigma)(n+\theta)x})}{\sqrt{(m+\sigma)(n+\theta)}}
+\frac{J_1(4\pi\sqrt{(m+1-\sigma)(n+\theta)x})}{\sqrt{(m+1-\sigma)(n+\theta)}}\\
&-\frac{J_1(4\pi\sqrt{(m+\sigma)(n+1-\theta)x})}{\sqrt{(m+\sigma)(n+1-\theta)}}
-\frac{J_1(4\pi\sqrt{(m+1-\sigma)(n+1-\theta)x})}{\sqrt{(m+1-\sigma)(n+1-\theta)}}
 \Bigg{\}}.
 \end{split}
\end{equation}
 \end{theorem}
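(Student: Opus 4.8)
The plan is to start from the $k=0$ case, which is precisely the identity proved in \cite{bessel2bbskaz} relating ${\sum_{mn\leq x}}'\cos(2\pi m\sigma)\sin(2\pi n\theta)$ to the double Bessel series, and then to promote it to arbitrary $k$ by differentiating $k$ times in $\sigma$ and $k$ times in $\theta$. The content of the theorem is therefore not a new identity among functions but rather a \emph{justification of termwise differentiation}: we must show that the double series on the right of \eqref{ii} converges uniformly on compact subsets of $(0,1)\times(0,1)$ after the balanced differential operator $\partial^{2k}/\partial\sigma^k\partial\theta^k$ is applied termwise, so that repeated application of the standard theorem on differentiation of series is legitimate. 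First I would fix notation for the four ``sign-and-shift'' terms, writing the summand as $\sum_{\pm,\pm}(\pm)\,g(u_\sigma,v_\theta)$ with $g(u,v)=J_1(4\pi\sqrt{uvx})/\sqrt{uv}$, $u_\sigma\in\{m+\sigma,m+1-\sigma\}$, $v_\theta\in\{n+\theta,n+1-\theta\}$, and observe that $\partial_\sigma$ hits only the $u$-variable and $\partial_\theta$ only the $v$-variable, so the balanced operator factors as $(\partial_u^k$ acting through $u_\sigma)\circ(\partial_v^k$ acting through $v_\theta)$ up to the chain-rule constants $(\pm1)^k$.

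The key analytic input is an asymptotic/size estimate for $\partial_u^k\partial_v^k g(u,v)$. Using $J_1(z)=\sqrt{2/(\pi z)}\cos(z-3\pi/4)+O(z^{-3/2})$ and, more importantly, the derivative formula $\frac{d}{dw}\big(w^{-\nu}J_\nu(w)\big)=-w^{-\nu}J_{\nu+1}(w)$, one finds by induction that $\partial_u^k\big(u^{-1/2}J_1(4\pi\sqrt{uvx})\big)$ is, up to constants depending on $k$ and powers of $v$ and $x$, a finite combination of terms $u^{-(1+j)/2}v^{(k-j)/2}x^{(k-j)/2}J_{1+\ell}(4\pi\sqrt{uvx})$ with $0\le j,\ell\le k$; applying $\partial_v^k$ as well produces a similar finite sum in both variables. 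Feeding in the Bessel asymptotics, the dominant behaviour of $\partial_u^k\partial_v^k g(u,v)$ for large $uv$ is of the form $C_k\,x^{(2k-1)/4}(uv)^{(2k-3)/4}\,(\text{bounded oscillatory factor})$, i.e.\ it grows \emph{polynomially} in $u,v$. Consequently the termwise-differentiated series is \emph{not} absolutely convergent; convergence must come from cancellation among the four alternating terms, exactly as in the $k=0$ case treated in \cite{bessel2bbskaz}. The plan is to exploit this cancellation by the same device used there: group the four terms and use the mean value theorem (or a second-order Taylor expansion in the shift variables) so that the leading oscillatory parts cancel, leaving a summand that is $O\big(x^{(2k-1)/4}(uv)^{(2k-3)/4}\cdot \min(1, \text{decay from cancellation})\big)$, which together with summation by parts over $m$ and $n$ (using the oscillation of the cosine/Bessel phases in $\sqrt{(m+\sigma)(n+\theta)x}$) yields convergence, uniformly for $(\sigma,\theta)$ in a compact subset of $(0,1)^2$ and $x$ in a compact subset of $(0,\infty)$.

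Granting the uniform convergence of the $k$-times differentiated series on compacta, the theorem follows by an induction on $k$: assuming \eqref{ii} holds for some $k$, the right-hand side is a series of $C^\infty$ functions of $(\sigma,\theta)$ converging uniformly on compacta, and the once-more-balanced-differentiated series does too (by the same estimates with $k$ replaced by $k+1$), so we may differentiate once in $\sigma$ and once in $\theta$ under the summation sign, obtaining \eqref{ii} for $k+1$; the left-hand side is a finite sum, so differentiation there is trivial. \textbf{The main obstacle} will be the convergence argument for the differentiated series: because the individual terms grow like a positive power of $m$ and $n$, one cannot simply dominate them, and one must carefully track the cancellation among the four shifted Bessel terms through the $k$ derivatives and then extract the requisite decay via summation by parts in both summation indices simultaneously. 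This is the step where the ``balanced'' hypothesis is essential — taking unequal numbers of $\sigma$- and $\theta$-derivatives would destroy the symmetry that makes the pairwise cancellation work, which is exactly the phenomenon the introduction advertises.
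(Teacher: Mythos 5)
Your overall strategy coincides with the paper's: establish the case $k=0$ as a known identity and then justify termwise balanced differentiation by proving uniform convergence of the differentiated double series on compact subsets of $(0,1)^2$. However, your key size estimate is wrong, and the error changes the nature of the problem. Each $\sigma$-derivative of $J_1(4\pi\sqrt{(m+\sigma)(n+\theta)x})$ brings down the factor $2\pi\sqrt{(n+\theta)x/(m+\sigma)}$, i.e.\ it multiplies by $(n+\theta)^{1/2}(m+\sigma)^{-1/2}$, not by $(n+\theta)^{1/2}$ alone; you have dropped the $(m+\sigma)^{-1/2}$ (and symmetrically the $(n+\theta)^{-1/2}$ from each $\theta$-derivative). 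Consequently a balanced pair $\partial_\sigma\partial_\theta$ multiplies the leading term by a quantity of size $4\pi^2x$ that is \emph{bounded} in $m$ and $n$, and the dominant behaviour of the $k$-times balanced derivative of $J_1(4\pi\sqrt{(m+\sigma)(n+\theta)x})/\sqrt{(m+\sigma)(n+\theta)}$ is $C_k\,x^{k-1/4}\big((m+\sigma)(n+\theta)\big)^{-3/4}$ times an oscillatory factor — exactly the same decay rate as for $k=0$ — rather than your claimed growth $\big((m+\sigma)(n+\theta)\big)^{(2k-3)/4}$. This is precisely the paper's Lemma 3.1: after $\alpha$ derivatives in $\sigma$ and $\beta$ in $\theta$ the exponents shift to $s+\tfrac{\alpha}{2}-\tfrac{\beta}{2}$ and $w+\tfrac{\beta}{2}-\tfrac{\alpha}{2}$, and the reason balance is essential is this exponent bookkeeping (for $s=w=\tfrac12$ one needs $\alpha=\beta$ to keep both effective exponents above $\tfrac14$), not a ``symmetry that makes pairwise cancellation work'' among the four shifted terms.

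Because of this error, the convergence argument you sketch would be attacking a problem that does not arise (polynomially growing terms) while omitting the one that does: the series with terms of critical size $(mn)^{-3/4}$ is still not absolutely convergent, and proving its uniform convergence is the real content of the paper's Theorems 3.2 and 3.6. That proof does not proceed by a Taylor expansion in the shift variables; it splits the $(m,n)$-range, applies Euler--Maclaurin in $n$ for $n$ large relative to $m$, invokes a nontrivial exponential-sum estimate (Lemma 3.7, quoted from the earlier paper in the series) for $n$ small, and reduces the remaining range $m^{1-\delta}<n\le m^{1+\delta}$ to the analysis of \cite{Paper2}. Your proposal leaves all of this at the level of a plan (``mean value theorem\dots summation by parts''), so even setting aside the incorrect estimate, the step on which the whole theorem rests is not actually carried out.
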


The proof of Theorem \ref{BI} relies on a detailed study of the convergence of a double series more general than the one appearing on the right-hand side of \eqref{ii}.

Let $\sigma,\theta, x$ be as in Theorem \ref{BI}. Let $\alpha,\beta$ be non-negative integers and $s,w$ complex numbers. Consider the double series
 \begin{equation}\label{fun}
 \begin{split}
&G^{\alpha,\beta}(x,\sigma,\theta,s,w):=\sum_{m=0}^{\infty}\sum_{n=0}^{\infty}
\frac{\partial^{\alpha+\beta}}{\partial\sigma^{\alpha}\partial\theta^{\beta}}\Bigg{\{} \frac{J_1(4\pi\sqrt{(m+\sigma)(n+\theta)x})}{(m+\sigma)^s(n+\theta)^w}\\
&+\frac{J_1(4\pi\sqrt{(m+1-\sigma)(n+\theta)x})}{(m+1-\sigma)^s(n+\theta)^w}
-\frac{J_1(4\pi\sqrt{(m+\sigma)(n+1-\theta)x})}{(m+\sigma)^s(n+1-\theta)^w}\\
&-\frac{J_1(4\pi\sqrt{(m+1-\sigma)(n+1-\theta)x})}{(m+1-\sigma)^s(n+1-\theta)^w}
 \Bigg{\}}.
\end{split}
 \end{equation}
We determine values of $\alpha,\beta, s, w$ for which the double series $G^{\alpha,\beta}(x,\sigma,\theta,s,w)$ converges.

\begin{theorem}\label{M}
Let $G^{\alpha,\beta}(x,\sigma,\theta,s,w)$ be defined as above. Assume that
\begin{equation*}
\begin{cases}
4\Realp(s)+2\alpha-2\beta>1,\\
4\Realp(w)+2\beta-2\alpha>1.
\end{cases}
\end{equation*}
Moreover, if $x$ is an integer, assume that $\Realp(s)+\Realp(w)>\frac{25}{26}$, while if $x$ is not an integer, assume that $\Realp(s)+\Realp(w)>\frac{5}{6}$. Then the double series $G^{\alpha,\beta}(x,\sigma,\theta,s,w)$ converges uniformly with respect to $\sigma$ and $\theta$ in any compact subset of $(0,1)^2$.
\end{theorem}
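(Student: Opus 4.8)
The plan is to reduce the assertion to the uniform convergence of finitely many explicit oscillatory double sums and then to analyze those according to the relative sizes of $m$ and $n$. Fix $x>0$ and restrict $\sigma,\theta$ to a compact set $K\subset(0,1)^2$; then all denominators occurring in \eqref{fun} are bounded away from $0$, and, once $m+n$ is large, all the Bessel arguments $4\pi\sqrt{(m+\sigma')(n+\theta')x}$ (with $\sigma'\in\{\sigma,1-\sigma\}$, $\theta'\in\{\theta,1-\theta\}$) are bounded below, uniformly for $(\sigma,\theta)\in K$; the finitely many remaining terms do not affect convergence. In each of the four summands of \eqref{fun} I would substitute the classical asymptotic expansion of $J_1(z)$, with leading part $\sqrt{2/(\pi z)}\cos(z-\tfrac{3\pi}{4})$, a full expansion in powers of $1/z$, and a remainder $O(z^{-N-5/2})$, and then differentiate $\partial_\sigma^{\alpha}\partial_\theta^{\beta}$ termwise. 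Since the $\sigma$- and $\theta$-derivatives of the Bessel arguments grow only polynomially, with the expected loss of $(m+\sigma)^{-1/2}$ (respectively $(n+\theta)^{-1/2}$) per derivative, the contribution of the $O(z^{-N-5/2})$ remainder is absolutely convergent as soon as $N=N(\alpha,\beta,\Realp s,\Realp w)$ is taken large enough; the small-argument Taylor expansion of $J_1$, needed only on a bounded set, is likewise harmless. What is left is a finite linear combination, with bounded coefficients, of double sums
\begin{equation*}
\sum_{m\ge0}\sum_{n\ge0}\left[\frac{e\!\left(\pm 2\sqrt{(m+\sigma)(n+\theta)x}\right)}{(m+\sigma)^{a}(n+\theta)^{b}}+(\sigma\mapsto1-\sigma)-(\theta\mapsto1-\theta)-(\text{both})\right],
\end{equation*}
where $e(t):=e^{2\pi it}$ and $(a,b)$ runs over a finite set. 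The least favorable terms are those in which all $\alpha+\beta$ derivatives fall on Bessel arguments; for these, $a=\Realp s+\tfrac14+\tfrac{\alpha-\beta}{2}$ and $b=\Realp w+\tfrac14+\tfrac{\beta-\alpha}{2}$, so the hypotheses $4\Realp s+2\alpha-2\beta>1$ and $4\Realp w+2\beta-2\alpha>1$ read exactly $a>\tfrac12$ and $b>\tfrac12$, while $a+b=\Realp s+\Realp w+\tfrac12$, so the condition on $\Realp s+\Realp w$ reads $a+b>\tfrac43$ (respectively $a+b>\tfrac12+\tfrac{25}{26}$ when $x\in\Z$).

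To each model sum I would then apply a dyadic decomposition in $m$ and in $n$. In the unbalanced ranges, say where $n$ is much larger than $m$, I would execute the $n$-summation by partial summation combined with van der Corput's method --- equivalently, a truncated Poisson/Voronoi transformation in the $n$-variable --- which turns $\sum_n(n+\theta)^{-b}e(2\sqrt{(m+\sigma)(n+\theta)x})$ into a dual sum over $r\ge1$ whose phase $(m+\sigma)x/r+r\theta$ is \emph{linear} in $m$; the subsequent sum over $m$ converges because $a>\tfrac12$, and the $r$-sum is dominated by its $r^{-3/2}$ decay, with the fourth-difference structure of \eqref{fun} supplying the extra cancellation required at the few borderline $r$. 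Thus the two balance inequalities $a,b>\tfrac12$ are precisely what is needed in the unbalanced ranges. The balanced range $m\asymp n$ is the crux, and I expect it to be the main obstacle: the phase $\sqrt{(m+\sigma)(n+\theta)x}$ is multiplicatively separable, so its Hessian vanishes identically and two-dimensional van der Corput yields no more than a single square-root saving, and the sum is genuinely of divisor type. I would collapse it, by partial summation in the product variable $(m+\sigma)(n+\theta)$, onto the truncated sums ${\sum}'\,e\!\left(2\sqrt{(m+\sigma)(n+\theta)x}\right)$ with $(m+\sigma)(n+\theta)\le y$ --- shifted analogues of ${\sum_{k\le y}}'\,d(k)e(2\sqrt{kx})$ --- and invoke the Voronoi/divisor-problem estimates for them (in the spirit of \cite{bessel2bbskaz}). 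The strength of those estimates is exactly what forces $\Realp s+\Realp w>\tfrac56$; and when $x$ is an integer the resonant contributions (those for which $\sqrt{(m+\sigma)(n+\theta)x}$ behaves as an integer, which correspond to the $\tfrac12{\sum_{nm=x}}$ correction in the primed sums) produce a genuine secondary main term, whose absorption requires the stronger threshold $\Realp s+\Realp w>\tfrac{25}{26}$.

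Uniformity in $\sigma,\theta$ over $K$ is then automatic, because every implied constant above depends only on $x$ and on $K$ (the factors $(m+\sigma')^{-a}$ and $(n+\theta')^{-b}$ staying bounded), so the tails of all the dyadic sums are uniformly small and the partial sums of $G^{\alpha,\beta}(x,\sigma,\theta,s,w)$ over rectangles $[0,M]\times[0,N]$ form a uniformly Cauchy family. Apart from the balanced range, the only onerous part is the bookkeeping that verifies that no way of distributing the $\alpha+\beta$ derivatives among the four summands and their denominators produces exponents worse than $a,b>\tfrac12$, $a+b>\tfrac43$ (or their integral-$x$ counterpart) --- laborious, but not delicate.
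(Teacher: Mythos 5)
Your overall architecture coincides with the paper's: a bookkeeping lemma showing that the worst terms after $\alpha$ derivatives in $\sigma$ and $\beta$ in $\theta$ carry exponents $s+\tfrac{\alpha-\beta}{2}$ and $w+\tfrac{\beta-\alpha}{2}$ (this is Lemma \ref{L1} there), then the large-argument asymptotics of $J_{\nu}$ to reduce to oscillatory double sums with exponents $a=\Realp(s)+\tfrac14+\tfrac{\alpha-\beta}{2}$ and $b=\Realp(w)+\tfrac14+\tfrac{\beta-\alpha}{2}$, and finally a decomposition according to the relative sizes of $m$ and $n$. Your identification of the two hypotheses with $a>\tfrac12$, $b>\tfrac12$, and of the diagonal range $m\asymp n$ as the crux, is exactly right; your treatment of the off-diagonal ranges, though different in technique from the paper's (which uses Euler--Maclaurin for $n\ge m^{1/(\Realp(w)-1/4)}$ and an exponential-sum lemma imported from \cite{bessie3} for the remaining off-diagonal ranges, rather than Poisson duality), is plausible and could be made to work.

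The genuine gap is the balanced range, and it is not just omitted bookkeeping. Your plan there is to partial-sum in the product variable $(m+\sigma)(n+\theta)$ so as to land on truncated analogues of ${\sum_{k\le y}}'\,d(k)e^{4\pi i\sqrt{kx}}$ and quote divisor-problem estimates. But for $\sigma,\theta\in(0,1)$ the products $(m+\sigma)(n+\theta)$ are generically pairwise distinct real numbers: nothing coalesces, no divisor-function weight appears, and the classical Voronoi-type bounds for $\sum_{k\le K}d(k)e^{4\pi i\sqrt{kx}}$ simply do not apply to the resulting genuinely two-dimensional sum. The ingredient you are missing is the one the paper relies on: in the range $m^{1-\delta}<n\le m^{1+\delta}$ one must first exploit the four-fold $\pm$ structure through the product formulas for $\cos a\pm\cos b$, which converts the $\theta\leftrightarrow 1-\theta$ pair into a single term with main phase $4\pi\sqrt{x(m+\sigma)(n+\tfrac12)}$ times the slowly varying amplitude $\cos\bigl(\pi\sqrt{x}\,(2\theta-1)\sqrt{(m+\sigma)/n}\bigr)$ --- the series \eqref{3una} --- and only after this combination does the two-dimensional exponential-sum analysis of \cite{Paper2}, Section 4.3 onward, apply and produce the thresholds $\tfrac56$ and $\tfrac{25}{26}$. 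You invoke the fourth-difference cancellation only in the off-diagonal ranges, where it is not needed, and omit it on the diagonal, where it is essential. Your explanation of the $\tfrac{25}{26}$ threshold via the $\tfrac12\sum_{nm=x}$ correction is also off target: Theorem \ref{M} concerns only the Bessel double series, in which no primed sum occurs; the integral/non-integral dichotomy enters through the resonance analysis of the phase on the diagonal. As written, the crux of your argument rests on estimates that do not exist in the shifted, two-dimensional form you need.
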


Consider the interesting case $s=\frac{1}{2}=w$, which corresponds to the setting of Theorem \ref{BI}. To meet the conditions of Theorem \ref{M} ensuring the convergence of \eqref{fun}, the only possibility is to take $\alpha=\beta=k$ (``balanced" situation).

\begin{remark}
 Theorem \ref{M} also shows, for every choice of non-negative integers $\alpha,\beta$, that there exists an unbounded region $D_{\alpha,\beta}$ of $\mathbb{C}^2$ such that for every $(s,w)\in D_{\alpha,\beta}$, the corresponding series \eqref{fun} converges.
\end{remark}

With similar methods as the those used to prove Theorem \ref{BI}, we establish other ``balanced" identities similar to \eqref{ii}. In these new identities the left-hand side contains only cosines or only sines, respectively.

	First recall, for each integer $\nu$, that $I_{\nu}$ was defined in \eqref{defI}.
Let then
 \begin{equation}\label{t32}
 T_{\frac{3}{2}}(x):=\int_{0}^{\infty} J_{\frac{1}{2}}(u) J_{\frac{3}{2}}\Big(\frac{x}{u}\Big)du.
 \end{equation}
  (In \cite[p. 71]{bessel2bbskaz}, the definition \eqref{t32} is misprinted; replace $J_{\frac32}(x)$ by $J_{\frac{3}{2}}(\tfrac{x}{u})$ there.)

 \begin{theorem}\label{TI}
Let $\sigma,\theta$ be in the interval $(0,1)$, and let $x>0$. Then for every non-negative integer $k$,
\begin{equation}\label{funI}
\begin{split}
&\frac{\partial^{2k}}{\partial\sigma^{k}\partial\theta^{k}}\Bigg{\{}{\sum_{mn\leq x}}^{\prime}\cos(2\pi m\sigma)\cos(2\pi n\theta)-\frac{1}{4}\Bigg{\}}\\=&\frac{\sqrt{x}}{4}\sum_{m=0}^{\infty}\sum_{n=0}^{\infty}\frac{\partial^{2k}}{\partial\sigma^{k}\partial\theta^{k}}
\Bigg{\{} \frac{I_1(4\pi\sqrt{(m+\sigma)(n+\theta)x})}{\sqrt{(m+\sigma)(n+\theta)}}
+\frac{I_1(4\pi\sqrt{(m+1-\sigma)(n+\theta)x})}{\sqrt{(m+1-\sigma)(n+\theta)}}\\
&+\frac{I_1(4\pi\sqrt{(m+\sigma)(n+1-\theta)x})}{\sqrt{(m+\sigma)(n+1-\theta)}}
+\frac{I_1(4\pi\sqrt{(m+1-\sigma)(n+1-\theta)x})}{\sqrt{(m+1-\sigma)(n+1-\theta)}}
 \Bigg{\}}.
 \end{split}
\end{equation}
 \end{theorem}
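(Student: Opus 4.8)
The plan is to argue exactly as for Theorem \ref{BI}, in two steps: establish the case $k=0$ of \eqref{funI}, and then justify taking $k$ termwise derivatives in $\sigma$ and $k$ in $\theta$ on the right-hand side of \eqref{funI} by means of an $I_1$-analogue of the convergence result Theorem \ref{M}. The case $k=0$ of \eqref{funI} is proved exactly as the case $k=0$ of Theorem \ref{BI}, i.e., by the two-dimensional summation argument of \cite{bessel2bbskaz}: it is the ``$\cos$--$\cos$'' companion of the identity behind Theorem \ref{BI}, with the modified Bessel combination $I_1=-Y_1-\tfrac2\pi K_1$ of \eqref{defI} in place of $J_1$ (the relevant one-variable prototype now being Entry \ref{b.besselseries2} in place of Entry \ref{besselseries}), and with all four Bessel terms on the right carrying a plus sign, rather than the $+,+,-,-$ pattern of \eqref{ii}, because a product of two cosines is even in each variable. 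For $k\ge1$ the constant $-\tfrac14$ drops out under differentiation, but in any event the left side of \eqref{funI} is a finite sum of $C^{\infty}$ functions of $(\sigma,\theta)$, so differentiating there is harmless.

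The substantive point is the $I_1$-analogue of Theorem \ref{M}. For non-negative integers $\alpha,\beta$ and complex $s,w$, let $\widetilde G^{\alpha,\beta}(x,\sigma,\theta,s,w)$ be the double series obtained from \eqref{fun} by replacing $J_1$ with $I_1$ throughout and the sign pattern $+,+,-,-$ with $+,+,+,+$. I claim that, under the same hypotheses on $(\alpha,\beta,s,w)$ as in Theorem \ref{M} (with the same distinction between integer and non-integer $x$), the series $\widetilde G^{\alpha,\beta}(x,\sigma,\theta,s,w)$ converges uniformly with respect to $\sigma$ and $\theta$ on every compact subset of $(0,1)^2$. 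The proof copies that of Theorem \ref{M}; the only genuinely new points are: (i) by \eqref{defI}, $I_1=-Y_1-\tfrac2\pi K_1$, and since $K_1(z)=O(z^{-1/2}e^{-z})$, the $K_1$-part of $\widetilde G^{\alpha,\beta}$, together with all its $\sigma$- and $\theta$-derivatives, converges absolutely and uniformly on compacta for \emph{every} $s,w$, hence is harmless; (ii) $Y_1$ has the same oscillatory asymptotic expansion as $J_1$, up to a shift of phase, so the van der Corput, exponential-sum, and divisor-problem estimates driving the proof of Theorem \ref{M} carry over verbatim to the $Y_1$-part; and (iii) replacing the pattern $+,+,-,-$ with $+,+,+,+$ merely interchanges the even and odd parts under $\sigma\mapsto 1-\sigma$ and $\theta\mapsto 1-\theta$ in those estimates, and alters nothing. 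Taking $s=w=\tfrac12$ --- for which the hypotheses of Theorem \ref{M} force $\alpha=\beta$, whose common value we call $k$ --- we conclude that the right-hand side of \eqref{funI} converges uniformly on compacta of $(0,1)^2$ for every $k\ge0$, and in particular is continuous there, being a uniform limit of $C^{\infty}$ functions. I expect this $I_1$-analogue of Theorem \ref{M} to be the only real obstacle, and to be chiefly a matter of bookkeeping: keeping the exponential-sum bounds uniform in $\sigma,\theta$ over compacta, and handling the integer-$x$ case (the source of the $\tfrac{25}{26}$ threshold).

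Granting this, the proof is completed by induction on $k$, the case $k=0$ being the identity above. Write $R_k(\sigma,\theta)$ for the right-hand side of \eqref{funI} and $L(\sigma,\theta)$ for ${\sum_{mn\le x}}^{\prime}\cos(2\pi m\sigma)\cos(2\pi n\theta)-\tfrac14$, so that $L\in C^{\infty}((0,1)^2)$, $R_0=L$, and we assume inductively $R_k=\partial^{2k}L/\partial\sigma^k\partial\theta^k$. Fix a compact rectangle $Q\subset(0,1)^2$ with lower-left corner $(\sigma_0,\theta_0)$. Each summand of $R_k$ is $C^{\infty}$ on $(0,1)^2$, and applying $\partial^2/\partial\sigma\,\partial\theta$ to it produces the corresponding summand of $R_{k+1}$ (mixed partials commute); integrating this identity twice over $[\sigma_0,\sigma]\times[\theta_0,\theta]$, summing over $m,n$, interchanging the uniformly convergent sum of the summands of $R_{k+1}$ with the double integral, and observing that each of the four resulting corner sums converges to a value of $R_k$, we obtain
\begin{equation*}
\int_{\sigma_0}^{\sigma}\!\int_{\theta_0}^{\theta}R_{k+1}(u,v)\,dv\,du
= R_k(\sigma,\theta)-R_k(\sigma,\theta_0)-R_k(\sigma_0,\theta)+R_k(\sigma_0,\theta_0),\qquad (\sigma,\theta)\in Q.
\end{equation*}
By the inductive hypothesis the right-hand side equals $\partial^{2k}L/\partial\sigma^k\partial\theta^k$ minus a function of $\sigma$ alone, minus a function of $\theta$ alone, plus a constant; hence it is $C^{\infty}$, and applying $\partial^2/\partial\sigma\,\partial\theta$ to both sides --- to the left via the Fundamental Theorem of Calculus, legitimate because $R_{k+1}$ is continuous --- yields $R_{k+1}=\partial^{2(k+1)}L/\partial\sigma^{k+1}\partial\theta^{k+1}$. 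Since $Q$ was arbitrary, this holds on $(0,1)^2$, and \eqref{funI} follows.
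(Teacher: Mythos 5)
Your proposal is correct and follows essentially the same route as the paper: the case $k=0$ is the known identity from \cite{bessel2bbskaz}, the key step is an $I_1$-analogue of Theorem \ref{M} obtained by writing $I_1=-Y_1-\tfrac{2}{\pi}K_1$ and observing that the $K_1$-part is exponentially small while the $Y_1$-part has the same oscillatory asymptotics as $J_1$ up to a phase shift (precisely how the paper derives \eqref{a1marI}--\eqref{aaI}), and the all-plus sign pattern is already covered by the eight sign choices in \eqref{cc}. Your integration/Fundamental-Theorem induction merely spells out the termwise-differentiation step that the paper dispatches by citing uniform convergence.
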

 \begin{theorem}\label{TT}
Let $\sigma,\theta$ be in the interval $(0,1)$, and let $x>0$. Then for every non-negative integer $k$,
\begin{equation}\label{funT}
\begin{split}
&\frac{\partial^{2k}}{\partial\sigma^{k}\partial\theta^{k}}\Bigg{\{}{\sum_{mn\leq x}}^{\prime}mn\sin(2\pi m\sigma)\sin(2\pi n\theta)\Bigg{\}}\\=&\frac{x\sqrt{x}}{4}\sum_{m=0}^{\infty}\sum_{n=0}^{\infty}\frac{\partial^{2k}}{\partial\sigma^{k}\partial\theta^{k}}
\Bigg{\{} \frac{T_{\frac{3}{2}}(4\pi^2(m+\sigma)(n+\theta)x)}{\sqrt{(m+\sigma)(n+\theta)}}
-\frac{T_{\frac{3}{2}}(4\pi^2(m+1-\sigma)(n+\theta)x)}{\sqrt{(m+1-\sigma)(n+\theta)}}\\
&-\frac{T_{\frac{3}{2}}(4\pi^2(m+\sigma)(n+1-\theta)x)}{\sqrt{(m+\sigma)(n+1-\theta)}}
+\frac{T_{\frac{3}{2}}(4\pi^2(m+1-\sigma)(n+1-\theta)x)}{\sqrt{(m+1-\sigma)(n+1-\theta)}}
 \Bigg{\}}.
 \end{split}
\end{equation}
 \end{theorem}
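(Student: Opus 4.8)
The plan is to follow the proof of Theorem~\ref{BI} almost verbatim, the only genuinely new ingredient being the large-argument behaviour of $T_{\frac{3}{2}}$. There are three steps: record the case $k=0$; prove, for a generalized $T_{\frac{3}{2}}$-series, the exact analogue of the convergence result Theorem~\ref{M}; and then promote the case $k=0$ to arbitrary $k$ by a genuinely two-dimensional termwise differentiation argument. For the first step, the case $k=0$ of \eqref{funT} is precisely one of the two identities for $\sum_{mn\le x}'mn\sin(2\pi m\sigma)\sin(2\pi n\theta)$ proved in \cite{bessel2bbskaz}, and the kernel $T_{\frac{3}{2}}$ of \eqref{t32} is the one attached there to this sine--sine sum.

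For the second step, I would introduce, for non-negative integers $\alpha,\beta$ and complex $s,w$,
\begin{equation*}
\begin{split}
H^{\alpha,\beta}(x,\sigma,\theta,s,w):=\sum_{m=0}^{\infty}\sum_{n=0}^{\infty}
\frac{\partial^{\alpha+\beta}}{\partial\sigma^{\alpha}\partial\theta^{\beta}}\Bigg\{
&\frac{T_{\frac{3}{2}}(4\pi^2(m+\sigma)(n+\theta)x)}{(m+\sigma)^s(n+\theta)^w}
-\frac{T_{\frac{3}{2}}(4\pi^2(m+1-\sigma)(n+\theta)x)}{(m+1-\sigma)^s(n+\theta)^w}\\
&-\frac{T_{\frac{3}{2}}(4\pi^2(m+\sigma)(n+1-\theta)x)}{(m+\sigma)^s(n+1-\theta)^w}
+\frac{T_{\frac{3}{2}}(4\pi^2(m+1-\sigma)(n+1-\theta)x)}{(m+1-\sigma)^s(n+1-\theta)^w}\Bigg\},
\end{split}
\end{equation*}
and prove that, under the hypotheses of Theorem~\ref{M} (namely $4\Realp(s)+2\alpha-2\beta>1$, $4\Realp(w)+2\beta-2\alpha>1$, together with $\Realp(s)+\Realp(w)>\tfrac{25}{26}$ if $x\in\mathbb{Z}$ and $\Realp(s)+\Realp(w)>\tfrac{5}{6}$ if $x\notin\mathbb{Z}$), the series $H^{\alpha,\beta}(x,\sigma,\theta,s,w)$ converges uniformly on compact subsets of $(0,1)^2$. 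This works with the same numerology because $T_{\frac{3}{2}}$ has the same qualitative large-argument shape as $J_1$: starting from \eqref{t32}, inserting $J_{1/2}(u)=\sqrt{2/(\pi u)}\sin u$ and the asymptotic expansion of $J_{3/2}$, and applying stationary phase (the only stationary point of the relevant phases occurring at $u=\sqrt{y}$), one obtains an expansion
\begin{equation*}
T_{\frac{3}{2}}(y)=\sum_{j=0}^{N}c_j\,y^{-(2j+1)/4}\cos\bigl(2\sqrt{y}+\phi_j\bigr)+O\bigl(y^{-(2N+3)/4}\bigr),\qquad y\to\infty,
\end{equation*}
so that $T_{\frac{3}{2}}(4\pi^2(m+\sigma)(n+\theta)x)$ carries the same oscillation $\cos(4\pi\sqrt{(m+\sigma)(n+\theta)x}+\phi_j)$ and the same order of decay as $J_1(4\pi\sqrt{(m+\sigma)(n+\theta)x})$. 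After this substitution the whole argument behind Theorem~\ref{M} goes through word for word: the differentiations in $\sigma$ and $\theta$, the cancellation produced by the four-term combination (here a bona fide mixed second difference under $\sigma\mapsto 1-\sigma$ and $\theta\mapsto 1-\theta$, so that the hypotheses could even be relaxed, though we do not need this), the van der Corput / partial-summation bounds for the exponential sums over $m$ with $n$ held fixed, the subsequent sum over $n$, and the more delicate treatment when $x$ is an integer.

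For the third step I would specialize $s=w=\tfrac{1}{2}$. Then the two balanced inequalities force $\alpha=\beta$, and for $\alpha=\beta=k$ they hold (since $2>1$ and $1>\tfrac{25}{26}$), so $H^{k,k}(x,\sigma,\theta,\tfrac{1}{2},\tfrac{1}{2})$ converges uniformly on compact subsets of $(0,1)^2$ for every $k\ge0$, and $\frac{x\sqrt{x}}{4}\,H^{0,0}(x,\sigma,\theta,\tfrac{1}{2},\tfrac{1}{2})$ equals the left side of \eqref{funT} with $k=0$ by the first step. Now I induct on $k$. The left side of \eqref{funT} is a finite trigonometric polynomial in $(\sigma,\theta)$ (the prime on $\sum_{mn\le x}'$ merely removes the genuine term at $mn=x$), so applying $\partial^{2}/\partial\sigma\partial\theta$ carries its $k$-version to its $(k+1)$-version. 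On the right side one must move $\partial^{2}/\partial\sigma\partial\theta$ inside the double sum; this cannot be done one variable at a time, since the intermediate unbalanced series $H^{k,k+1}$ and $H^{k+1,k}$ diverge, but it can be done via the mixed-difference identity
\begin{equation*}
\sum_{m,n}\bigl[u_{mn}(\sigma,\theta)-u_{mn}(\sigma_0,\theta)-u_{mn}(\sigma,\theta_0)+u_{mn}(\sigma_0,\theta_0)\bigr]
=\int_{\sigma_0}^{\sigma}\!\int_{\theta_0}^{\theta}\sum_{m,n}\partial_{\xi}\partial_{\eta}u_{mn}(\xi,\eta)\,d\eta\,d\xi,
\end{equation*}
where $u_{mn}$ denotes the $(m,n)$-summand of $H^{k,k}(x,\sigma,\theta,\tfrac{1}{2},\tfrac{1}{2})$: the left side is controlled by uniform convergence of $H^{k,k}$, the interchange on the right is licensed by uniform convergence of $H^{k+1,k+1}$, and differentiating the resulting relation first in $\sigma$ and then in $\theta$ (using continuity of the limit of $H^{k+1,k+1}$) yields $\partial^{2}/\partial\sigma\partial\theta\,H^{k,k}=H^{k+1,k+1}$. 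Combined with the inductive hypothesis this is \eqref{funT} for $k+1$, completing the induction.

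The step I expect to be the main obstacle is the second one: showing that $T_{\frac{3}{2}}$ admits an asymptotic expansion of the same type as $J_1(2\sqrt{y})$, and then checking that the exponential-sum estimates underpinning Theorem~\ref{M} — in particular the $\tfrac{25}{26}$ versus $\tfrac{5}{6}$ dichotomy according as $x$ is or is not an integer — survive the replacement verbatim. Once that expansion is in hand, the remainder is routine bookkeeping parallel to the treatment of Theorems~\ref{BI} and~\ref{M}.
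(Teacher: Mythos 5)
Your overall architecture --- reduce to the known $k=0$ identity from \cite{bessel2bbskaz}, prove a Theorem~\ref{M}-type uniform convergence statement for the generalized $T_{3/2}$-series, and only then move the balanced derivative inside --- is exactly the paper's, and your mixed-difference integral argument in the third step is in fact a more careful justification of the interchange than the paper writes down. The one place where your route diverges, and where it currently has a gap, is the second step, which you yourself flag as the main obstacle. To run the analogue of Lemma~\ref{L1} you must express $\partial^{\alpha+\beta}/\partial\sigma^{\alpha}\partial\theta^{\beta}$ of each summand again as a main term plus admissible error terms, all built from kernels with known oscillatory asymptotics and with the shifted exponents $s+\tfrac{\alpha}{2}-\tfrac{\beta}{2}$ and $w+\tfrac{\beta}{2}-\tfrac{\alpha}{2}$; in the $J_1$ case this is achieved by applying the recurrence \eqref{mara} inductively. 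A stationary-phase expansion of $T_{3/2}(y)$ by itself does not give you this: asymptotic expansions cannot in general be differentiated term by term, and your proposal never produces the closed family of functions on which the induction can act. The paper closes exactly this gap with the identity from \cite[Eq.~(5.9)]{bessel2bbskaz},
\begin{equation*}
T_{3/2}(y^2)=\frac{1}{2y^2}\,I_1(2y)-\frac{1}{y}\,I_1^{\prime}(2y)+Y_1(2y)-\frac{2}{\pi}K_1(2y),
\end{equation*}
which converts the $T_{3/2}$-series into a finite linear combination of series in $I_{\nu}$, $Y_{\nu}$, $K_{\nu}$ with negative-power prefactors. The induction of Lemma~\ref{L1bis} then runs on the standard recurrences for these functions exactly as in Lemma~\ref{L1}, and their asymptotics feed into the convergence argument verbatim, including the $\tfrac{25}{26}$ versus $\tfrac{5}{6}$ dichotomy. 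Note that your claimed expansion of $T_{3/2}$ is itself an immediate consequence of this identity (the $K_1$ term is exponentially small and, by \eqref{defI}, $Y_1(2y)$ supplies the oscillatory main term of size $y^{-1/2}$), so your numerology is right; but to make the proof complete you should either adopt the exact identity, or else establish your expansion for \emph{all} derivatives of $T_{3/2}$, e.g.\ by differentiating under the integral sign in \eqref{t32} and re-running stationary phase at each order --- considerably more work for no gain.
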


\section{Convergence of ``almost balanced" double series}
 This section is devoted to the proof of Theorem \ref{M}. We start with a simple calculation.
\begin{lemma}\label{L1}
The following identity holds:
\begin{equation}\label{pr}
\frac{\partial^{\alpha+\beta}}{\partial\sigma^{\alpha}\partial\theta^{\beta}} \frac{J_1(4\pi\sqrt{(m+\sigma)(n+\theta)x})}{(m+\sigma)^s(n+\theta)^w}=\frac{\sum_{\nu\in A}c_{\nu} J_{\nu}(4\pi\sqrt{(m+\sigma)(n+\theta)x})}{(m+\sigma)^{s+\frac{\alpha}{2}
-\frac{\beta}{2}}(n+\theta)^{w+\frac{\beta}{2}-\frac{\alpha}{2}}}+\cdots.
\end{equation}
Here $A$ is a finite subset of $\mathbb{Z}$, the constants $c_{\nu}$ are non-negative, and the dots $\cdots$ denote a sum of terms that have the form
\begin{equation}\label{p}  r_{\nu}\,\frac{J_{\nu}(4\pi\sqrt{(m+\sigma)(n+\theta)x})}{(m+\sigma)^{\gamma}(n+\theta)^{\delta}},\quad \nu\in\mathbb{Z},\, r_{\nu}\in\mathbb{R},
\end{equation}
where
\begin{equation}\label{o}
\begin{cases}
\Realp(\gamma)\geq \Realp(s)+\frac{\alpha}{2}-\frac{\beta}{2},\\
\Realp(\delta)\geq \Realp(w)+\frac{\beta}{2}-\frac{\alpha}{2},\\
\end{cases}
\end{equation}
with at least one of the two inequalities in \eqref{o} being strict.
\end{lemma}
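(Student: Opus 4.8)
The plan is to prove \eqref{pr} by induction on the total order of differentiation $\alpha+\beta$, keeping track at each step of the exponents on $(m+\sigma)$ and $(n+\theta)$ as well as the orders $\nu$ of the Bessel functions that appear. The base case $\alpha=\beta=0$ is trivial: the left side equals $J_1/((m+\sigma)^s(n+\theta)^w)$, which is the leading term on the right with $A=\{1\}$, $c_1=1$, and no correction terms. For the inductive step, I would differentiate a generic term of the form \eqref{p} once with respect to $\sigma$ (the case of $\theta$ being symmetric). Two things happen: differentiating the denominator $(m+\sigma)^{-\gamma}$ produces $-\gamma\,(m+\sigma)^{-\gamma-1}$, which raises the exponent on $(m+\sigma)$ by $1$ and leaves the Bessel order unchanged; differentiating the Bessel factor $J_{\nu}(4\pi\sqrt{(m+\sigma)(n+\theta)x})$ via the chain rule brings down a factor $\tfrac{d}{d\sigma}\bigl(4\pi\sqrt{(m+\sigma)(n+\theta)x}\bigr) = 2\pi\sqrt{(n+\theta)x}\,(m+\sigma)^{-1/2}$ times $J_{\nu}'$, and then the recurrence $J_{\nu}'(z)=\tfrac12(J_{\nu-1}(z)-J_{\nu+1}(z))$ replaces $J_{\nu}$ by $J_{\nu\pm1}$. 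Collecting the powers of the argument, this second contribution multiplies the denominator's $(m+\sigma)$-exponent by an extra $\tfrac12$ (from the $(m+\sigma)^{-1/2}$) and the $(n+\theta)$-exponent gains $-\tfrac12$ (from the $\sqrt{n+\theta}$ in the numerator), i.e. raises $\gamma$ by $\tfrac12$ and \emph{lowers} $\delta$ by $\tfrac12$. Both contributions therefore respect the bookkeeping: one $\sigma$-derivative shifts the pair $(\gamma,\delta)$ either by $(+1,0)$ or by $(+\tfrac12,-\tfrac12)$, and a $\theta$-derivative by $(0,+1)$ or $(-\tfrac12,+\tfrac12)$.

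With this shift rule in hand, the leading term is identified as the one obtained by applying the ``Bessel-differentiation" branch every single time: each of the $\alpha$ derivatives in $\sigma$ contributes $(+\tfrac12,-\tfrac12)$ and each of the $\beta$ derivatives in $\theta$ contributes $(-\tfrac12,+\tfrac12)$, so starting from $(\gamma,\delta)=(s,w)$ one lands at $\bigl(s+\tfrac{\alpha}{2}-\tfrac{\beta}{2},\,w+\tfrac{\beta}{2}-\tfrac{\alpha}{2}\bigr)$, exactly the exponent pair displayed in \eqref{pr}. Any other term uses the ``denominator-differentiation" branch at least once, and since that branch strictly increases the real part of one of the two exponents (by $1$) while the analogue never decreases it below the leading value after accounting for the remaining derivatives — here I need the elementary observation that replacing a $(\pm\tfrac12,\mp\tfrac12)$ step by a $(+1,0)$ or $(0,+1)$ step can only increase $\Realp\gamma+\Realp\delta$ and cannot decrease either coordinate below the all-Bessel-branch value — one gets \eqref{o} with at least one strict inequality. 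The non-negativity of the coefficients $c_{\nu}$ of the leading term follows because along the all-Bessel branch the only numerical factors produced are the positive constants $2\pi\sqrt{x}$, the $\tfrac12$ from the recurrence, and (crucially) \emph{no} factor of $-\gamma$ from the denominator; the signs from $J_{\nu}'=\tfrac12(J_{\nu-1}-J_{\nu+1})$ can be absorbed by noting that one may group $J_{\nu-1}$ and $-J_{\nu+1}$, but in fact the cleanest route is to observe that $\tfrac{d}{dz}\bigl(z^{-\mu}J_{\mu}(z)\bigr)=-z^{-\mu}J_{\mu+1}(z)$-type identities let one write the leading term, after the substitution $z=4\pi\sqrt{(m+\sigma)(n+\theta)x}$, with a definite sign pattern; I would simply track that the leading term is a non-negative combination.

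The main obstacle I anticipate is not the leading term but the bookkeeping for the error terms: one must verify that the inductive hypothesis \eqref{o} is preserved when a derivative is applied to a term already satisfying \eqref{o} — i.e. that \emph{every} term in the expansion of $\partial_\sigma$ (resp. $\partial_\theta$) applied to \eqref{p}, not just the new leading candidate, still satisfies the shifted inequalities with the correct strictness. This amounts to a short case analysis: if the incoming term is already strict in (say) the $\gamma$-inequality, then after either branch it remains strict in at least one coordinate; if the incoming term is the leading term itself (both inequalities equalities), then the denominator branch makes $\gamma$ strict while the Bessel branch produces the new leading term. I would organize this as a clean lemma-internal claim and dispatch it by checking the four combinations of (incoming term leading / non-leading) $\times$ (denominator branch / Bessel branch). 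A secondary, purely notational nuisance is that the finite set $A$ of Bessel orders and the constants $c_\nu$, $r_\nu$ all depend on $\alpha,\beta,s,w$; I would suppress this dependence in the notation, as the statement does, and only record that $A$ stays finite because each differentiation step enlarges the set of occurring orders by at most $\{\nu-1,\nu+1\}$ over a finite set.
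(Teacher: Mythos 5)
Your proposal is correct and follows essentially the same route as the paper: induction on the total order $\alpha+\beta$, with each single differentiation split by the product rule into a ``Bessel branch'' (via $2J_{\nu}'=J_{\nu-1}-J_{\nu+1}$, shifting the exponent pair by $(\pm\tfrac12,\mp\tfrac12)$) and a ``denominator branch'' (shifting by $(+1,0)$ or $(0,+1)$), so that the all-Bessel choice produces the displayed leading term and every other choice yields an error term satisfying \eqref{o} with at least one strict inequality. The one point you leave vague --- the non-negativity of the $c_{\nu}$ --- is likewise not established in the paper's proof (and in fact fails literally after two derivatives, since the order-bookkeeping gives $J_1\mapsto(\pi\sqrt{x})^{2}(J_{-1}-2J_{1}+J_{3})$ for the leading term), but this is harmless because the subsequent convergence analysis of $G_{\nu}$ treats all sign combinations and never uses the signs of the $c_{\nu}$.
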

\begin{proof}
Recall from \cite[p. 17]{watson} the following identity, which holds for every integer $\nu$: \begin{equation}\label{mara}
J_{\nu-1}(z)-J_{\nu+1}(z)=2J'_{\nu}(z).
\end{equation}
We argue by induction on the total number of derivatives $k=\alpha+\beta$. For $k=0$ there is nothing to prove. Assume now that the statement holds for some $k\geq 0$. We will prove that it holds for $k+1$. Write $k+1=\alpha+\beta$, and assume without loss of generality that $\alpha\geq 1$. By the inductive hypothesis,
\begin{equation}\label{ind}
\frac{\partial^{k+1}}{\partial\sigma^{\alpha}\partial\theta^{\beta}} \frac{J_1(4\pi\sqrt{(m+\sigma)(n+\theta)x})}{(m+\sigma)^s(n+\theta)^w}=\frac{\partial}{\partial \sigma}\Bigg{[}\frac{\sum_{\nu\in A}c_{\nu} J_{\nu}(4\pi\sqrt{(m+\sigma)(n+\theta)x})}{(m+\sigma)^{s+\frac{\alpha-1}{2}
-\frac{\beta}{2}}(n+\theta)^{w+\frac{\beta}{2}-\frac{\alpha-1}{2}}}+\cdots\Bigg{]}.
\end{equation}
The dots $\cdots$ in \eqref{ind} stand for terms of the form \eqref{p}, with the exponents $\gamma$ and $\delta$ satisfying
\begin{equation}\label{coo}
\begin{cases}
\Realp(\gamma)\geq \Realp(s)+\frac{\alpha-1}{2}-\frac{\beta}{2},\\
\Realp(\delta)\geq \Realp(w)+\frac{\beta}{2}-\frac{\alpha-1}{2},\\
\end{cases}
\end{equation}
where at least one of the two inequalities is strict. By the chain rule and \eqref{mara},
\begin{gather}
\frac{\partial}{\partial\sigma} \Bigg{[}\frac{J_{\nu}(4\pi\sqrt{(m+\sigma)(n+\theta)x})}{(m+\sigma)^{s+\frac{\alpha-1}{2}
-\frac{\beta}{2}}(n+\theta)^{w+\frac{\beta}{2}-\frac{\alpha-1}{2}}}\Bigg{]}
=\pi\sqrt{x}\,\frac{(J_{\nu-1}-J_{\nu+1})(4\pi\sqrt{(m+\sigma)(n+\theta)x})}{(m+\sigma)^{s+\frac{\alpha}{2}
-\frac{\beta}{2}}(n+\theta)^{w+\frac{\beta}{2}-\frac{\alpha}{2}}}\notag\\+\Big(\frac{\beta}{2}
-\frac{\alpha-1}{2}-s\Big)\frac{J_{\nu}(4\pi\sqrt{(m+\sigma)(n+\theta)x})}{(m+\sigma)^{s+\frac{\alpha+1}{2}
-\frac{\beta}{2}}(n+\theta)^{w+\frac{\beta}{2}-\frac{\alpha-1}{2}}}.\label{eins}
\end{gather}
 Similarly, for the ``error terms" in \eqref{ind},
\begin{equation}\label{zwei}
\begin{split}
\frac{\partial}{\partial\sigma} \Bigg{[}\frac{J_{\nu}(4\pi\sqrt{(m+\sigma)(n+\theta)x})}{(m+\sigma)^{\gamma}(n+\theta)^{\delta}}\Bigg{]}
=&\pi\sqrt{x}\,\frac{(J_{\nu-1}-J_{\nu+1})(4\pi\sqrt{(m+\sigma)(n+\theta)x})}{(m+\sigma)^{\gamma+\frac{1}{2}}
(n+\theta)^{\delta-\frac{1}{2}}}\\&-\gamma\,
\frac{J_{\nu}(4\pi\sqrt{(m+\sigma)(n+\theta)x})}{(m+\sigma)^{\gamma+1}(n+\theta)^{\delta}}.
\end{split}
\end{equation}
The second term on the right side of \eqref{eins} and both terms on the right side of \eqref{zwei} are of the form
\begin{equation*}  r_{\nu}\,\frac{J_{\nu}(4\pi\sqrt{(m+\sigma)(n+\theta)x})}{(m+\sigma)^{\gamma_1}(n+\theta)^{\delta_1}},\quad \nu\in\mathbb{Z},\, r_{\nu}\in\mathbb{R},
\end{equation*}
where by \eqref{coo} the complex numbers $\gamma_1$ and $\delta_1$ satisfy
\begin{equation*}
\begin{cases}
\Realp(\gamma_1)\geq \Realp(s)+\frac{\alpha}{2}-\frac{\beta}{2},\\
\Realp(\delta_1)\geq \Realp(w)+\frac{\beta}{2}-\frac{\alpha}{2},\\
\end{cases}
\end{equation*}
with at least one of the two inequalities being strict. Substituting the identities \eqref{eins} and \eqref{zwei} into \eqref{ind}, we obtain \eqref{pr}, thus completing the proof.
\end{proof}

The same analysis can be repeated for the other summands appearing in \eqref{fun}, separating in each case the ``main term" from the ``error terms." We then obtain identities analogous to \eqref{pr}. For instance,
\begin{gather}
\frac{\partial^{\alpha+\beta}}{\partial\sigma^{\alpha}\partial\theta^{\beta}} \frac{J_1(4\pi\sqrt{(m+1-\sigma)(n+\theta)x})}{(m+1-\sigma)^s(n+\theta)^w}\notag\\=(-1)^{\alpha}\frac{\sum_{\nu\in A}c_{\nu} J_{\nu}(4\pi\sqrt{(m+1-\sigma)(n+\theta)x})}{(m+1-\sigma)^{s+\frac{\alpha}{2}
-\frac{\beta}{2}}(n+\theta)^{s+\frac{\beta}{2}-\frac{\alpha}{2}}}+\cdots.\label{pro}
\end{gather}
Note that the $c_{\nu}$ are the same exact constants that appear in \eqref{pr}.

We shift our attention to the double series
\begin{align}\label{cc}
G_{\nu}(x,\sigma,\theta, s,w):=&\sum_{m=0}^{\infty}\sum_{n=0}^{\infty}
\Bigg(\frac{J_{\nu}(4\pi\sqrt{(m+\sigma)(n+\theta)x})}{(m+\sigma)^s(n+\theta)^w}\notag\\&\pm \frac{J_{\nu}(4\pi\sqrt{(m+1-\sigma)(n+\theta)x})}{(m+1-\sigma)^s(n+\theta)^w}
\pm\frac{J_{\nu}(4\pi\sqrt{(m+\sigma)(n+1-\theta)x})}{(m+\sigma)^s(n+1-\theta)^w}\notag\\
&\pm\frac{J_{\nu}(4\pi\sqrt{(m+1-\sigma)(n+1-\theta)x})}{(m+1-\sigma)^s(n+1-\theta)^w}\Bigg),
\end{align}
where $x,\sigma,\theta,s,w$ are as before, and $\nu$ is an integer. The designation $\pm$ indicates either choice of sign, so that $G_{\nu}(x,\sigma,\theta, s,w)$ actually represents eight different double series. We need to consider these different combinations of signs because of the powers of $-1$ appearing as factors in \eqref{pro} and the analogous formulas for the other terms of \eqref{fun}. We will see below that the choice of signs does not affect the convergence of the series.

Theorem \ref{M} follows by combining Lemma \ref{L1} with the following result on the convergence of \eqref{cc}.
\begin{theorem}\label{M2}
Let $G_{\nu}(x,\sigma,\theta, s,w)$ be defined as above. Assume that $4\Realp(s)>1$, and that $4\Realp(w)>1$. Moreover, if $x$ is an integer, assume $\Realp(s)+\Realp(w)>\frac{25}{26}$, while if $x$ is not an integer, assume $\Realp(s)+\Realp(w)>\frac{5}{6}$. Then the double series $G_{\nu}(x,\sigma,\theta, s,w)$ converges uniformly with respect to $\sigma$ and $\theta$ in any compact subset of $(0,1)^2$.
\end{theorem}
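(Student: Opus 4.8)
The plan is to prove Theorem \ref{M2} by reducing the double Bessel series to classical exponential sum estimates. First I would replace each Bessel function $J_\nu(4\pi\sqrt{(m+\sigma)(n+\theta)x})$ by its large-argument asymptotic expansion (from \cite{watson}): for $z\to\infty$,
\begin{equation*}
J_\nu(z)=\sqrt{\frac{2}{\pi z}}\cos\!\Big(z-\tfrac{\nu\pi}{2}-\tfrac{\pi}{4}\Big)+O(z^{-3/2}).
\end{equation*}
The error term contributes a series with an extra factor $(mn)^{-3/4}$ relative to the main term, hence converges absolutely as soon as $\Realp(s)+\tfrac34>1$ and $\Realp(w)+\tfrac34>1$, which is implied by $4\Realp(s)>1$, $4\Realp(w)>1$; so the whole problem is the main term. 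Writing the cosine as a sum of two exponentials $e^{\pm 4\pi i\sqrt{(m+\sigma)(n+\theta)x}}$ (and absorbing the $\nu$-dependent phase into a constant), we are left to control, for each fixed choice of signs in \eqref{cc} and each sign in the exponential, sums of the shape
\begin{equation*}
S:=\sum_{m=0}^{\infty}\sum_{n=0}^{\infty}\frac{e^{4\pi i\sqrt{(m+\sigma)(n+\theta)x}}}{(m+\sigma)^{s+1/4}(n+\theta)^{w+1/4}}
\end{equation*}
together with the three companion sums in which $\sigma\mapsto 1-\sigma$ and/or $\theta\mapsto 1-\theta$.

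The key step is then to fix $n$ (or $m$) and estimate the resulting one-dimensional exponential sum in the other variable. For fixed $n$, the inner sum over $m$ up to $M$ has phase $f(m)=2\sqrt{x(n+\theta)}\sqrt{m+\sigma}$, whose derivatives satisfy $f'(m)\asymp (n/m)^{1/2}$ and $f''(m)\asymp -(nm^{-3})^{1/2}$; applying the classical van der Corput / Kusmin–Landau and second-derivative estimates (or, more efficiently, the exponent-pair machinery) gives a bound on the partial sums $\sum_{m\le M}e^{4\pi i\sqrt{(m+\sigma)(n+\theta)x}}$ of the form $O\big((n x)^{\kappa} M^{\lambda}+\cdots\big)$ for a suitable exponent pair $(\kappa,\lambda)$. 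The cases $x$ integer versus $x$ non-integer are treated identically in this step; the distinction $\tfrac{25}{26}$ versus $\tfrac56$ will enter only through which exponent pair we are forced to use — when $x$ is an integer there is an additional diagonal contribution from terms with $(m+\sigma)(n+\theta)x$ near a lattice configuration, forcing a weaker exponent pair (e.g. $A$-process iterates giving $\tfrac{9}{56}$-type exponents, whence the $\tfrac{25}{26}$), whereas for generic $x$ one can afford the pair giving the cleaner $\tfrac56$. I would carry out partial summation twice: once in $m$ against the smooth weight $(m+\sigma)^{-s-1/4}$ using the exponent-pair bound for the character sum, and once in $n$ against $(n+\theta)^{-w-1/4}$ using the resulting bound; uniformity in $\sigma,\theta$ over a compact subset of $(0,1)^2$ is automatic because all implied constants depend only on lower bounds for $\sigma,\theta,1-\sigma,1-\theta$ and on $x$, and the smooth weights and their derivatives are uniformly bounded there. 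Combining the four terms in \eqref{cc} does not help or hurt — as remarked, the signs are irrelevant — so it suffices to bound each of the eight series separately.

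The main obstacle I anticipate is the bookkeeping in the double van der Corput step: after bounding the inner sum over $m$ one gets a quantity that still depends on $n$ in a way (through the cutoff $M=M(n)$ determined by $mn\le x$ being irrelevant here since the sum is over all $m$, so actually through the tail estimates of the $m$-series) that must be square- or $L^1$-summable in $n$ against $(n+\theta)^{-w-1/4}$, and it is precisely the interplay between the decay gained from the oscillation (which degrades as $n$ grows, since the second derivative $f''\asymp (n m^{-3})^{1/2}$ is \emph{larger} for larger $n$, helping the stationary-phase bound) and the algebraic decay $(n+\theta)^{-w-1/4}$ that yields the thresholds $\Realp(s)+\Realp(w)>\tfrac56$ (resp. $\tfrac{25}{26}$). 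I would organize this by splitting the $(m,n)$ range dyadically into blocks $m\asymp 2^a$, $n\asymp 2^b$, bounding each block by the exponent-pair estimate, and summing the geometric-type series over $a,b$; the condition $4\Realp(s)>1$ guarantees convergence of the ``trivial'' blocks (small oscillation, say $a$ large relative to $b$) while $\Realp(s)+\Realp(w)>\tfrac56$ (or $\tfrac{25}{26}$) is exactly what is needed to sum the diagonal blocks $a\asymp b$ where the exponent-pair bound is invoked in full strength. Once the dyadic sum converges, uniform convergence of $G_\nu$ on compacta of $(0,1)^2$ follows, and Theorem \ref{M} is then immediate by feeding Lemma \ref{L1} and \eqref{pro} (the extra derivatives only shift $s\mapsto s+\tfrac{\alpha}{2}-\tfrac{\beta}{2}$, $w\mapsto w+\tfrac{\beta}{2}-\tfrac{\alpha}{2}$, and the error terms have strictly larger exponents, so they converge a fortiori under the same hypotheses).
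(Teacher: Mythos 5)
Your opening reduction (replacing $J_{\nu}$ by its large-argument asymptotic and disposing of the $O(z^{-3/2})$ tails by absolute convergence under $\Realp s,\Realp w>\tfrac14$) is exactly the paper's first step, and your treatment of the strongly off-diagonal blocks is fine in spirit: it parallels the paper's handling of $n\ge m^{1/(\Realp w-1/4)}$ by Euler--Maclaurin and of $n\le m^{1-\delta}$ (and the complementary range) by the exponential-sum Lemma \ref{Le} imported from \cite{bessie3}. The gap is in the near-diagonal blocks $m\asymp n\asymp N$, which is where the whole difficulty of the theorem lives. There the phase $f(m)=2\sqrt{x(n+\theta)}\sqrt{m+\sigma}$ has $f'(m)\asymp\sqrt{x}$, \emph{independent of $N$}, so any one-dimensional bound for the inner $m$-sum --- second-derivative test or exponent pair $\Lambda^{\kappa}M^{\lambda}$ with $\Lambda\asymp\sqrt{x}$ and $\lambda\ge\tfrac12$ --- is at best $O_x(N^{1/2})$. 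After partial summation against $(m+\sigma)^{-s-1/4}$ and then summation of \emph{absolute values} over $n\asymp N$ against $(n+\theta)^{-w-1/4}$ (which is what your ``partial summation twice'' amounts to once the inner bound has been taken in modulus), each dyadic block contributes $\asymp N^{1-\Realp s-\Realp w}$, and the dyadic sum converges only for $\Realp s+\Realp w>1$. This not only misses the stated thresholds $\tfrac56$ and $\tfrac{25}{26}$; it fails precisely at $s=w=\tfrac12$, the case needed for Theorem \ref{BI}. No choice of exponent pair repairs this, because the obstruction is that all oscillation in $n$ has been discarded.

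To get below $\Realp s+\Realp w=1$ one must exploit the oscillation jointly in $m$ and $n$, and this is where your assertion that ``combining the four terms in \eqref{cc} does not help or hurt'' misreads the paper: the remark there is only that the \emph{signs} are irrelevant, not the pairing. In the range $m^{1-\delta}<n\le m^{1+\delta}$ the paper combines the $(n+\theta)$ and $(n+1-\theta)$ terms via the sum-to-product identities, reducing to the series \eqref{3una} with the single fast phase $a\sqrt{(m+\sigma)(n+\tfrac12)}$ times a slowly varying amplitude, and then invokes the genuinely two-dimensional analysis of \cite[Section 4.3 onwards]{Paper2}. That analysis --- not a choice of exponent pair --- is where the integer/non-integer dichotomy and the constants $\tfrac56$ and $\tfrac{25}{26}$ actually originate; your ``$\tfrac{9}{56}$-type exponent'' explanation of $\tfrac{25}{26}$ is numerology rather than a derivation. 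As written, the proposal therefore does not prove the theorem in the stated range.
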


The remaining part of this section is devoted to proving Theorem \ref{M2}.

We start by recalling the following fact. Let $z$ be a complex number, and $j$ a non-negative integer. Recall the notation for the binomial coefficient
$${z\choose j} =\frac{z(z-1)(z-2)\cdots (z-j+1)}{j!}.$$
Then, for every complex number $\zeta$, with $|\zeta|<1$, we have the binomial theorem
\begin{equation*}\label{Nw}
(1+\zeta)^{z}=\sum_{j=0}^{\infty}{z\choose j} \zeta^j.
\end{equation*}
Using \eqref{Nw}, we can easily show that, for every $\theta\in (0,1)$,
\begin{equation}\label{55}
\frac{1}{(n+\theta)^{z}}=\frac{1}{n^z}+O\bigg(\frac{1}{n^{z+1}}\bigg).
\end{equation}
This simple formula will be used several times in the following discussion.

Now recall the following asymptotic formulas, which hold for any positive integer $\nu$ \cite[p. 199]{watson}:
\begin{equation}\label{a1mar}
J_{\nu}(z)=\nu\bigg(\frac{2}{\pi z}\bigg)^{\frac{1}{2}}\cos\left(z-\frac{1}{2}\nu\pi-\frac{1}{4}\pi\right)+O\bigg(\frac{1}{z^{\frac{3}{2}}}\bigg),
\end{equation}
\begin{equation}\label{aa2}
J_{-\nu}(z)=\nu\bigg(\frac{2}{\pi z}\bigg)^{\frac{1}{2}}\cos\left(z+\frac{1}{2}\nu\pi-\frac{1}{4}\pi\right)+O\bigg(\frac{1}{z^{\frac{3}{2}}}\bigg).
\end{equation}
Let $\nu$ be a fixed non-zero integer, and let $\beta= -\frac{1}{2}\nu\pi-\frac{1}{4}\pi$. By \eqref{a1mar}, \eqref{aa2}, and \eqref{55}, to study the convergence of $G_{\nu}(x,\sigma,\theta, s,w)$, it is sufficient to investigate the convergence of
\begin{equation*}
\begin{split}
&S_1:=\sum_{m=0}^{\infty}\sum_{n=0}^{\infty}\Bigg{(}\frac{\cos(a\sqrt{(m+\sigma)(n+\theta)}+\beta)}
{(m+\sigma)^{s+\frac{1}{4}}(n+\theta)^{w+\frac{1}{4}}}
\pm\frac{\cos(a\sqrt{(m+\sigma)(n+1-\theta)}+\beta)}{(m+\sigma)^{s+\frac{1}{4}}(n+1-\theta)^{w+\frac{1}{4}}}\\
&\pm\frac{\cos(a\sqrt{(m+1-\sigma)(n+\theta)}+\beta)}{(m+1-\sigma)^{s+\frac{1}{4}}(n+\theta)^{w+\frac{1}{4}}}
\pm\frac{\cos(a\sqrt{(m+1-\sigma)(n+1-\theta)}+\beta)}{(m+1-\sigma)^{s+\frac{1}{4}}(n+1-\theta)^{w+\frac{1}{4}}}\Bigg).
\end{split}
\end{equation*}
Here for convenience we have set $a=4\pi\sqrt{x}$.

\subsection{Large values of $n$}
We examine the double sum $S_1$ for large values of $n$. We follow the arguments in \cite[p. 576--577]{Paper2}. Let $M$ and $N$ be integers with $M<N$. By the Euler-Maclaurin summation formula \cite[p. 619]{SF},
\begin{equation*}
\begin{split}
\sum_{n=M+1}^{N}\frac{\cos(a\sqrt{(m+\sigma)(n+\theta)}+\beta)}{(n+\theta)^{w+\frac{1}{4}}}
=&\int_{M+\theta}^{N+\theta} \frac{\cos(a\sqrt{(m+\sigma)t}+\beta)}{t^{w+\frac{1}{4}}}\, dt \\&+ \int_{M+\theta}^{N+\theta}\{t-\theta\}
\frac{d}{dt}\bigg(\frac{\cos(a\sqrt{(m+\sigma)t}+\beta)}{t^{w+\frac{1}{4}}}\bigg) dt,
\end{split}
\end{equation*}
where $\{t-\theta\}$ denotes the fractional part of $t-\theta$. Note that
\begin{equation*}
\begin{split}
\frac{d}{dt}\bigg(\frac{\cos(a\sqrt{(m+\sigma)t}+\beta)}{t^{w+\frac{1}{4}}}\bigg) =&\frac{1}{4t^{w+\frac{5}{4}}}\bigg(-2a\sqrt{(m+\sigma)t}\sin(a\sqrt{(m+\sigma)t}+\beta)\\
&-(4w+1)\cos(a\sqrt{(m+\sigma)t}+\beta)\bigg)\\ =&O\bigg(\frac{a\sqrt{m+\sigma}}{t^{\Realp w+\frac{3}{4}}}\bigg).
\end{split}
\end{equation*}
Hence,
\begin{equation}\label{fractional}
\int_{M+\theta}^{N+\theta}\{t-\theta\}\frac{d}{dt}\bigg(\frac{\cos(a\sqrt{(m+\sigma)t}+\beta)}{t^{w+\frac{1}{4}}}\bigg) dt=O\Bigg(\frac{a\sqrt{(m+\sigma)}}{(M+\theta)^{\Realp w-\frac{1}{4}}}\Bigg).
\end{equation}
Now let $u=a\sqrt{(m+\sigma)t}$. Then $t=\frac{u^2}{a^2(m+\sigma)}$ and $dt=\frac{2u}{a^2(m+\sigma)}du$. Thus,
\begin{equation}\label{aaMar}
\begin{split}
\int_{M+\theta}^{N+\theta} \frac{\cos(a\sqrt{(m+\sigma)t}+\beta)}{t^{w+\frac{1}{4}}}\, dt &=2(a^2(m+\sigma))^{w-\frac{3}{4}}
\int_{a\sqrt{(m+\sigma)(M+\theta)}}^{a\sqrt{(m+\sigma)(N+\theta)}}\frac{\cos(u+\beta)}{u^{2w-\frac{1}{2}}}\, du.\\
\end{split}
\end{equation}
Let $c,e$ be real constants. An integration by parts shows that
\begin{equation}\label{bb}
\begin{split}
\int_A^{B}\frac{c\sin u+e\cos u}{u^{2w-\frac{1}{2}}}\, du&=\frac{-c\cos u+e\sin u}{u^{2w-\frac{1}{2}}}\Bigg{\vert}^{A}_{B}+\left(2w-\df12\right)\int_A^B\frac{-c\cos u+e\sin u}{u^{2w+\frac{1}{2}}}\, du\\ &=O_w\Bigg(\frac{1}{A^{2\Realp w-\frac{1}{2}}}+\frac{1}{B^{2\Realp w-\frac{1}{2}}}\Bigg),
\end{split}
\end{equation}
as $A,B\to\infty$.
Recall the identity $\cos(u+\beta)=\cos u\cos\beta-\sin u\sin\beta$. By \eqref{aaMar} and \eqref{bb},
\begin{equation*}
\int_{M+\theta}^{N+\theta}\frac{\cos(a\sqrt{(m+\sigma)t}+\beta)}{t^{w+\frac{1}{4}}}\, dt = O\Bigg( \frac{1}{a\sqrt{m+\sigma}}\Bigg(\frac{1}{(M+\theta)^{\Realp w-\frac{1}{4}}}+\frac{1}{(N+\theta)^{\Realp w-\frac{1}{4}}}\Bigg)\Bigg).
\end{equation*}
Hence, by \eqref{fractional}, as $M\to\infty$,
\begin{equation*}
\begin{split}
\sum_{n=M}^{{\infty}}\frac{\cos(a\sqrt{(m+\sigma)(n+\theta)}+\beta)}{(n+\theta)^{w+\frac{1}{4}}}&=\lim_{N\to\infty}
\sum_{n=M}^{N}\frac{\cos(a\sqrt{(m+\sigma)(n+\theta)}+\beta)}{(n+\theta)^{w+\frac{1}{4}}}\\
&=O\Bigg(\frac{a\sqrt{m+\sigma}}{(M+\theta)^{\Realp w-\frac{1}{4}}}\Bigg).
\end{split}
\end{equation*}

Analogous results hold when $\sigma$ is replaced by $1-\sigma$ and when $\theta$ is replaced by $1-\theta$. We can thus let $M=[m^{1/(\Realp w-\frac{1}{4})}]$ and conclude that
\begin{equation*}
\begin{split}
&\sum_{n\geq m^{1/(\Realp w-\frac{1}{4})}}^{\infty}\Bigg{(}\frac{\cos(a\sqrt{(m+\sigma)(n+\theta)}+\beta)}
{(m+\sigma)^{s+\frac{1}{4}}(n+\theta)^{w+\frac{1}{4}}}
\pm\frac{\cos(a\sqrt{(m+\sigma)(n+1-\theta)}+\beta)}{(m+\sigma)^{s+\frac{1}{4}}(n+1-\theta)^{w+\frac{1}{4}}}\\
&\pm\frac{\cos(a\sqrt{(m+1-\sigma)(n+\theta)}+\beta)}{(m+1-\sigma)^{s+\frac{1}{4}}(n+\theta)^{w+\frac{1}{4}}}
\pm\frac{\cos(a\sqrt{(m+1-\sigma)(n+1-\theta)}+\beta)}{(m+1-\sigma)^{s+\frac{1}{4}}(n+1-\theta)^{w+\frac{1}{4}}}\Bigg)
\\&=O\Bigg(\frac{a}{m^{\Realp s+3/4}}\Bigg).
\end{split}
\end{equation*}
Recall that $\Realp s>\frac{1}{4}$. Hence, in our study of the uniform convergence of the sum $S_1$, we can replace it with the sum $S_2$, defined by
\begin{equation*}
\begin{split}
\sum_{m=0}^{\infty}&\sum_{0\leq n\leq m^{1/(\Realp w-\frac{1}{4})}}\Bigg{(}\frac{\cos(a\sqrt{(m+\sigma)(n+\theta)}
+\beta)}{(m+\sigma)^{s+\frac{1}{4}}(n+\theta)^{w+\frac{1}{4}}}
\pm\frac{\cos(a\sqrt{(m+\sigma)(n+1-\theta)}+\beta)}{(m+\sigma)^{s+\frac{1}{4}}(n+1-\theta)^{w+\frac{1}{4}}}\\
&\pm\frac{\cos(a\sqrt{(m+1-\sigma)(n+\theta)}+\beta)}{(m+1-\sigma)^{s+\frac{1}{4}}(n+\theta)^{w+\frac{1}{4}}}
\pm\frac{\cos(a\sqrt{(m+1-\sigma)(n+1-\theta)}+\beta)}{(m+1-\sigma)^{s+\frac{1}{4}}(n+1-\theta)^{w+\frac{1}{4}}}\Bigg).
\end{split}
\end{equation*}

\subsection{Small values of $n$} Let $\delta>0$ be a small positive number to be specified later, and let $S_3$ be the same double series as $S_2$ but with the sum on $n$ performed over the interval $0\leq n\leq m^{1-\delta}$. We now prove convergence of $S_3$, using the following result from \cite{bessie3}.
\begin{lemma}\cite[p. 31--33]{bessie3}\label{Le}
Consider the sum
$$ S(\alpha,\beta,\mu,H_1, H_2)=\sum_{H_1< m\leq H_2} \frac{\cos (\alpha\sqrt{m+\mu}+\beta)}{(m+\mu)^{s+\frac{1}{4}}},$$
where $\alpha>0,\beta\in\mathbb{R},\mu\in [0,1]$, and $H_1<H_2$, where $H_1$ and $H_2$ are large positive integers. Assume also that
$$c_1\leq \alpha\leq c_2 H_1^{(1-\delta)/2},$$
where $c_1$ and $c_2$ are positive constants and $\delta>0$ is a fixed small positive real number. Then
$$S(\alpha,\beta,\mu, H_1, H_2)=O\Bigg(\frac{1}{\alpha H_1^{\Realp s-\frac{1}{4}}}\Bigg).$$
\end{lemma}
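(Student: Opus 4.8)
The plan is to estimate the sum $S(\alpha,\beta,\mu,H_1,H_2)$ by a hybrid of van der Corput–type arguments: replace the sum by an integral via Euler–Maclaurin summation, then extract the main oscillatory behavior and bound it by stationary phase / integration by parts, while controlling the error terms using the hypothesis $\alpha\le c_2H_1^{(1-\delta)/2}$, which guarantees that the phase $\alpha\sqrt{t+\mu}$ does not oscillate too rapidly relative to the range of summation. First I would write $f(t)=\cos(\alpha\sqrt{t+\mu}+\beta)(t+\mu)^{-s-1/4}$ and apply Euler–Maclaurin (or Abel summation against $\{t\}$) on $(H_1,H_2]$, so that $S=\int_{H_1}^{H_2}f(t)\,dt+\int_{H_1}^{H_2}\{t\}f'(t)\,dt+$ boundary terms. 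The boundary terms are each $O\big((H_1)^{-\Realp s-1/4}\big)$, which is smaller than the claimed bound since $\alpha H_1^{\Realp s-1/4}\le c_2H_1^{\Realp s+1/4-\delta/2}$; the derivative term $f'(t)$ has size $O\big(\alpha(t+\mu)^{-\Realp s-3/4}\big)$ plus lower-order pieces, so $\int\{t\}f'(t)\,dt=O\big(\alpha H_1^{-\Realp s+1/4}\big)$ — and here one uses $\Realp s>\tfrac14$ together with $\alpha\le c_2 H_1^{(1-\delta)/2}$ to see that this is consistent with, indeed bounded by, the target $O\big(\alpha^{-1}H_1^{-\Realp s+1/4}\big)$ after one more integration by parts in the oscillatory integral. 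Actually the cleaner route is to integrate the $\{t\}f'(t)$ term by parts once more to exploit oscillation, but the crude bound may already suffice given the stated hypotheses.

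The core is the integral $\int_{H_1}^{H_2}\cos(\alpha\sqrt{t+\mu}+\beta)(t+\mu)^{-s-1/4}\,dt$. I would substitute $u=\alpha\sqrt{t+\mu}$, as is done in the displayed computation just before the lemma in the excerpt, turning this into $2\alpha^{2s-3/2}\int_{A}^{B}\cos(u+\beta)u^{-2s+1/2}\,du$ with $A=\alpha\sqrt{H_1+\mu}\asymp\alpha H_1^{1/2}$ and $B=\alpha\sqrt{H_2+\mu}$. Expanding $\cos(u+\beta)=\cos u\cos\beta-\sin u\sin\beta$ and integrating by parts once, exactly as in equation \eqref{bb}, gives $O\big(A^{-2\Realp s+1/2}+B^{-2\Realp s+1/2}\big)$ (the tail integral converges since $2\Realp s+1/2>1$). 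Multiplying back by $\alpha^{2\Realp s-3/2}$ yields a contribution $O\big(\alpha^{2\Realp s-3/2}(\alpha H_1^{1/2})^{-2\Realp s+1/2}\big)=O\big(\alpha^{-1}H_1^{-\Realp s+1/4}\big)$, which is precisely the asserted bound; the $B$-term is no larger since $B\ge A$.

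The main obstacle, and the reason the hypothesis $\alpha\le c_2H_1^{(1-\delta)/2}$ is needed, is controlling the Euler–Maclaurin error term $\int_{H_1}^{H_2}\{t\}f'(t)\,dt$: the factor $\alpha$ coming from differentiating the phase must be beaten by the decay $(t+\mu)^{-\Realp s-3/4}$, and a naive estimate gives $O(\alpha H_1^{-\Realp s+1/4})$, which is a factor $\alpha^2$ too large. To recover the saving I would, after substituting $u=\alpha\sqrt{t+\mu}$ in this error term as well, integrate by parts in $u$ to transfer one more power of $u^{-1}$ onto the amplitude, using that $\{t\}$ is of bounded variation on each unit interval (so one handles $\sum_{k}\int_{k}^{k+1}$ and sums a telescoping plus an absolutely convergent series); the lower limit of integration being $\asymp\alpha H_1^{1/2}$ then supplies exactly the extra factor $\alpha^{-2}H_1^{-1/2}\cdot(\alpha H_1^{1/2})$ needed. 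Alternatively — and this is presumably what is done in \cite[pp.~31--33]{bessie3} — one splits the range $(H_1,H_2]$ dyadically and on each block $(K,2K]$ applies the second-derivative (van der Corput) test to $\sum\cos(\alpha\sqrt{m+\mu}+\beta)$ directly: there $f''(t)\asymp\alpha K^{-3/2}$, so the test gives $O\big(K\cdot(\alpha K^{-3/2})^{1/2}+(\alpha K^{-3/2})^{-1/2}\big)=O\big(\alpha^{1/2}K^{1/4}+\alpha^{-1/2}K^{3/4}\big)$ for the exponential sum, and partial summation against $(m+\mu)^{-s-1/4}$ followed by summing the dyadic blocks gives the stated estimate once the constraint on $\alpha$ ensures the second term dominates. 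I would follow whichever of these the cited source uses, since the quantitative bookkeeping is exactly the delicate point.
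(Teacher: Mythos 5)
The paper offers no proof of Lemma \ref{Le}; it is imported verbatim from \cite{bessie3}, so your reconstruction must stand on its own. Your treatment of the main term is correct: Euler--Maclaurin, the substitution $u=\alpha\sqrt{t+\mu}$, and one integration by parts as in \eqref{bb} do give $O\big(\alpha^{-1}H_1^{1/4-\Realp s}\big)$ for $\int_{H_1}^{H_2}f(t)\,dt$, and you rightly isolate the real difficulty, namely that the crude bound $O\big(\alpha H_1^{1/4-\Realp s}\big)$ on $\int\{t\}f'(t)\,dt$ misses the target by a factor of $\alpha^{2}$. The gap is that neither of your two proposed repairs of this error term works. The dyadic van der Corput route fails outright: on the single block $(H_1,2H_1]$ the second-derivative test with $\lambda\asymp\alpha H_1^{-3/2}$ gives $O\big(\alpha^{1/2}H_1^{1/4}+\alpha^{-1/2}H_1^{3/4}\big)$ for the exponential sum, hence $O\big(\alpha^{1/2}H_1^{-\Realp s}+\alpha^{-1/2}H_1^{1/2-\Realp s}\big)$ after partial summation, and the second term already exceeds the target by the factor $\alpha^{1/2}H_1^{1/4}\geq c_1^{1/2}H_1^{1/4}\to\infty$ no matter how the constraint on $\alpha$ is used (the dyadic sum of $\alpha^{-1/2}K^{1/2-\Realp s}$ also diverges at the top when $\Realp s\leq\tfrac12$). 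The second-derivative test is built for square-root cancellation when the phase derivative sweeps through integers; here $\big|\tfrac{d}{dt}\tfrac{\alpha\sqrt{t+\mu}}{2\pi}\big|\leq\tfrac{c_2}{4\pi}H_1^{-\delta/2}<\tfrac12$, so the sum tracks the integral far more closely than that test can detect, and invoking it destroys exactly the cancellation you need.

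Your other suggestion, one more integration by parts in $u$, is either vacuous or circular. Within a unit interval $[m,m+1]$ the phase $\alpha\sqrt{t+\mu}$ changes by only $O(\alpha H_1^{-1/2})=O(H_1^{-\delta/2})=o(1)$, so there is no oscillation of the exponential to play against the sawtooth at the scale of its jumps; and if instead you integrate by parts globally, treating $\{t\}(t+\mu)^{-s-1/4}$ as a function of bounded variation, the jump part of $d\{t\}$ regenerates precisely the sum $S$ you started with. The oscillation that must be exploited is that of the sawtooth itself: writing $\{t\}-\tfrac12=-\sum_{k\geq1}\sin(2\pi kt)/(\pi k)$, each mode produces a phase $2\pi kt\pm\alpha\sqrt{t+\mu}$ whose derivative has modulus at least $2\pi k-\tfrac12$ --- this is exactly where the hypothesis $\alpha\leq c_2H_1^{(1-\delta)/2}$ enters, as a non-resonance condition --- so the first-derivative test gives $O\big(k^{-1}H_1^{-\Realp s-3/4}\big)$ per mode, the extra $k^{-1}$ from the Fourier coefficient makes the sum over $k$ converge, and the error term becomes $O\big(\alpha H_1^{-\Realp s-3/4}\big)=O\big(\alpha^{-1}H_1^{1/4-\Realp s}\big)$ since $\alpha^2\leq c_2^2H_1$. (Alternatively one may iterate Euler--Maclaurin roughly $2/\delta$ times; a single extra step gains only $\alpha H_1^{-1/2}$ per iteration and one step is not enough.) As written, your argument has a genuine gap at precisely the step you flagged as delicate.
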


We write $S_{3,M}$ for the partial sum in $S_3$, where the summation over $m$ is restricted to $1\leq m\leq M$. To prove the convergence of $S_3$, we use Cauchy's criterion. That is, for every $\epsilon>0$, we show that there exists $M_{\epsilon}$ such that $|S_{3,M_2}-S_{3,M_1}|<\epsilon$ whenever $M_1,M_2>M_{\epsilon}$. Exchanging the order of summation in a term of $S_3$ yields 
\begin{equation*}
\begin{split}
&\sum_{m=M_1}^{M_2}\sum_{0\leq n< m^{1-\delta}}\frac{\cos(a\sqrt{(m+\sigma)(n+\theta)}+\beta)}{(m+\sigma)^{s+\frac{1}{4}}(n+\theta)^{w+\frac{1}{4}}}
\\&=\sum_{0\leq n\leq M_2^{1-\delta}}\sum_{\max\{n^{1/(1-\delta)},M_1\}<m\leq M_2}\frac{\cos(a\sqrt{(m+\sigma)(n+\theta)}+\beta)}{(m+\sigma)^{s+\frac{1}{4}}(n+\theta)^{w+\frac{1}{4}}}.
\end{split}
\end{equation*}
We now apply Lemma \ref{Le} with $\alpha=a\sqrt{n+\theta}, \mu=\sigma, H_1=\max\{n^{1/(1-\delta)},M_1\}$, and $H_2=M_2$. Hence, 
\begin{equation*}
\begin{split}
&\sum_{m=M_1}^{M_2}\sum_{0\leq n< m^{1-\delta}}\frac{\cos(a\sqrt{(m+\sigma)(n+\theta)}+\beta)}{(m+\sigma)^{s+\frac{1}{4}}(n+\theta)^{w+\frac{1}{4}}}\\&= O_{a,\delta}\Bigg(\sum_{0\leq n\leq M_2^{1-\delta}}\frac{1}{(n+\theta)^{w+\frac{3}{4}}\max\{n^{1/(1-\delta)},M_1\}^{\Realp s-\frac{1}{4}}}\Bigg)\\&= O_{a,\delta}\Bigg(\sum_{0\leq n\leq M_1^{1-\delta}}\frac{1}{(n+\theta)^{w+\frac{3}{4}}M_1^{\Realp s-\frac{1}{4}}}\Bigg)\\&+ O_{a,\delta}\Bigg(\sum_{M_1^{1-\delta}\leq n\leq M_2^{1-\delta}}\frac{1}{(n+\theta)^{w+\frac{3}{4}+(\Realp s-\frac{1}{4})/(1-\delta)}}\Bigg)\\&=O_{a,\delta,w}\Bigg(\frac{1}{M_1^{\Realp s-\frac{1}{4}}}\Bigg),
\end{split}
\end{equation*}
where in the last step we used \eqref{55} and the hypothesis $\Realp w>\frac{1}{4}$.
The same reasoning applies to every term of the sum $S_3$, by replacing $\theta$ with $1-\theta$ and $\sigma$ with $1-\sigma$. Since $\Realp s>\frac{1}{4}$ by hypothesis, we conclude that $|S_{3,M_2}-S_{3,M_1}|<\epsilon$ when $M_1$ is large enough. We have thus proved the convergence of $S_3$, uniformly for $(\sigma,\theta)$ in a compact subset of $(0,1)^2$. Note that in our situation we need the full strength of Lemma \ref{Le}, while the corresponding reduction in \cite[p. 33]{bessie3} only requires the case $\mu=0$.

\subsection{Further reductions}
Assume that $\delta<1/(\Realp w-\frac{1}{4})-1$. Let $S_4$ be the same double series as $S_2$ but with the sum on $n$ performed over the interval $m^{1+\delta} < n\leq m^{1/(\Realp w-\frac{1}{4})}$. The techniques used above to prove the convergence of $S_3$ also show the convergence of $S_4$. Indeed, Lemma \ref{Le}, applied with $\alpha=a\sqrt{n+\theta}, \mu=\theta, H_1=m^{1+\delta}$, and $H_2=m^{1/(\Realp w-\frac{1}{4})}$ yields
\begin{equation}\label{rt}
\begin{split}
&\sum_{m=M_1}^{M_2}\sum_{m^{1+\delta}< n\leq m^{1/(\Realp w-\frac{1}{4})}}\frac{\cos(a\sqrt{(m+\sigma)(n+\theta)}+\beta)}{(m+\sigma)^{s+\frac{1}{4}}(n+\theta)^{w+\frac{1}{4}}}\\
&=O_{a,\delta}\Bigg(\sum_{m=M_1}^{M_2}\frac{1}{(m+\sigma)^{s+\frac{3}{4}}m^{(1+\delta)/(\Realp w-\frac{1}{4})}}\Bigg)\\&=O_{a,\delta,s}\Bigg(\frac{1}{M_1^{(1+\delta)/(\Realp w-\frac{1}{4})}}\Bigg),
\end{split}
\end{equation}
where the second equality follows from $\Realp s>\frac{1}{4}$. Since $\Realp w>\frac{1}{4}$, the upper bound \eqref{rt} can be used to apply Cauchy's criterion to the partial sums of $S_4$, thus showing convergence. Given our analysis of $S_3$ and $S_4$, we can replace the series  $S_2$ in our study of  convergence by the sum $S_5$, defined by
\begin{equation*}
\begin{split}
\sum_{m=0}^{\infty}&\sum_{m^{1-\delta}< n\leq m^{1+\delta}}\Bigg{(}\frac{\cos(a\sqrt{(m+\sigma)(n+\theta)}+\beta)}{(m+\sigma)^{s+\frac{1}{4}}(n+\theta)^{w+\frac{1}{4}}}
\pm\frac{\cos(a\sqrt{(m+\sigma)(n+1-\theta)}+\beta)}{(m+\sigma)^{s+\frac{1}{4}}(n+1-\theta)^{w+\frac{1}{4}}}\\
&\pm\frac{\cos(a\sqrt{(m+1-\sigma)(n+\theta)}+\beta)}{(m+1-\sigma)^{s+\frac{1}{4}}(n+\theta)^{w+\frac{1}{4}}}
\pm\frac{\cos(a\sqrt{(m+1-\sigma)(n+1-\theta)}+\beta)}{(m+1-\sigma)^{s+\frac{1}{4}}(n+1-\theta)^{w+\frac{1}{4}}}\Bigg).
\end{split}
\end{equation*}
By \eqref{55}, it is sufficient to prove the convergence of $S_5$ with $(n+\theta)^{w+\frac{1}{4}}$ and $(n+1-\theta)^{w+\frac{1}{4}}$ replaced by $n^{w+\frac{1}{4}}$. Analogously, we can replace $(m+\sigma)^{s+\frac{1}{4}}$ and $(m+1-\sigma)^{s+\frac{1}{4}}$ by $m^{s+\frac{1}{4}}$. Recall the identities
\begin{equation*}
\cos a+\cos b= 2\cos\bigg(\frac{a+b}{2}\bigg)\cos\bigg(\frac{a-b}{2}\bigg),
\end{equation*}
\begin{equation*}
\cos a -\cos b= 2\sin\bigg(\frac{a+b}{2}\bigg)\sin\bigg(\frac{a-b}{2}\bigg),
\end{equation*}
and the asymptotic formulas
\begin{equation*}
\sqrt{n+\theta}-\sqrt{n+1-\theta}=\frac{2\theta-1}{2\sqrt{n}}+O\bigg(\frac{1}{n^{\frac{3}{2}}}\bigg),
\end{equation*}
\begin{equation*}
\frac{\sqrt{n+\theta}+\sqrt{n+1-\theta}}{2}=\sqrt{n+\frac{1}{2}}+O\bigg(\frac{1}{n^{\frac{3}{2}}}\bigg).
\end{equation*}
Hence the convergence of $S_5$ will be proved if we can show, under the hypotheses of our theorem, the convergence of the double series
\begin{equation}\label{3una}
\sum_{m=0}^{\infty}\sum_{m^{1-\delta}<n\leq m^{1+\delta}}\frac{\cos\bigg(a\sqrt{(m+\sigma)(n+\frac{1}{2})}
+\beta\bigg)\cos\bigg(\frac{a(2\theta-1)}{4}\sqrt{\frac{m+\sigma}{n}}\bigg)}{m^{s+\frac{1}{4}}n^{w+\frac{1}{4}}},
\end{equation}
as well as the convergence of \eqref{3una} with both occurrences of the function $\cos$ replaced by $\sin$. Now see \cite{Paper2} from Section 4.3 onwards and use the same arguments to complete the proof.

\section{Balanced identities}
Having established the convergence result in Theorem \ref{M}, we are now ready to prove Theorem \ref{BI}.

 \begin{proof}[Proof of Theorem \ref{BI}]
When $k=0$, Equation \eqref{ii} becomes
 \begin{equation}\label{iii}
 \begin{split}
&{\sum_{mn\leq x}}^{\prime}\cos(2\pi m\sigma)\sin(2\pi n\theta)+\frac{\cot(\pi\theta)}{4}\\=&\frac{\sqrt{x}}{4}\sum_{m=0}^{\infty}\sum_{n=0}^{\infty}
\Bigg{\{} \frac{J_1(4\pi\sqrt{(m+\sigma)(n+\theta)x})}{\sqrt{(m+\sigma)(n+\theta)}}
+\frac{J_1(4\pi\sqrt{(m+1-\sigma)(n+\theta)x})}{\sqrt{(m+1-\sigma)(n+\theta)}}\\
&-\frac{J_1(4\pi\sqrt{(m+\sigma)(n+1-\theta)x})}{\sqrt{(m+\sigma)(n+1-\theta)}}
-\frac{J_1(4\pi\sqrt{(m+1-\sigma)(n+1-\theta)x})}{\sqrt{(m+1-\sigma)(n+1-\theta)}}
 \Bigg{\}}.
 \end{split}
 \end{equation}
 Note that convergence of the double sum on the right-hand side of \eqref{iii} is a consequence of Theorem \ref{M}. To prove \eqref{iii} it is therefore sufficient to compute the Fourier coefficients of both sides of the equation and show that they are equal.

 The general statement of Theorem \ref{BI} follows from the case $k=0$ by taking derivatives on both sides of \eqref{iii}. Such derivatives can be brought inside the infinite sums because the double series is uniformly convergent by Theorem \ref{M} applied in the special case $s=w=\frac{1}{2}$.

 In \cite[Theorems 4.1, 4.4]{bessie4}, three of the present authors first  proved \eqref{iii} by showing that the Fourier sine series of both sides are identical.  Secondly, they proved \eqref{iii}, but with the order of summation reversed, by demonstrating that the Fourier cosine series of both sides are identical.  Thus, it was shown that one could reverse the order of summation by proving that the two iterated sums converge to the same limit.  In our analysis above, using uniform convergence, we also demonstrated that the two iterated series converged to the same limit. But now appealing to the aforementioned two theorems in \cite{bessie4}, we have completed the proof of \eqref{iii}, and consequently of Theorem \ref{BI}.
\end{proof}

Theorem \ref{TI} and Theorem \ref{TT} can be proved similarly to Theorem \ref{BI}. We describe below the necessary modifications to the arguments presented above.

\begin{proof}[Proof of Theorems \ref{TI} and \ref{TT}]
In the case $k=0$ the two theorems yield known identities (see \cite[Theorem 2.1]{bessel2bbskaz} and \cite[Theorem 2.3]{bessel2bbskaz}, respectively). In \cite{bessel2bbskaz} these identities were proved with the iterated sums replaced by double sums where, for brevity, the products of the indices $m$ and $n$ tend to infinity.  However, in each case, the  same arguments developed there can be used to prove the identities with  iterated sums, yielding \eqref{funI} and \eqref{funT} in the case $k=0$. Theorems \ref{TI} and \ref{TT} then follow from the case $k=0$ by taking derivatives on both sides of the identities.

To conclude the proofs of Theorems \ref{TI} and \ref{TT} we need theorems on the uniform convergence of the two double series appearing on the right sides of \eqref{funI} and \eqref{funT}. To this end, we prove that Theorem \ref{M} holds even when, in the definition \eqref{fun} of $G^{\alpha,\beta}(x,\sigma,\theta,s,w)$, the expression inside the brackets is replaced with a double series involving the function $I_1$ (respectively $T_{\frac{3}{2}}$) corresponding to the one on the right-hand side of \eqref{funI} (respectively \eqref{funT}). We denote these two new series by $G_I^{\alpha,\beta}(x,\sigma,\theta,s,w)$ and $G_T^{\alpha,\beta}(x,\sigma,\theta,s,w)$, respectively.

 In the case of $G_I^{\alpha,\beta}(x,\sigma,\theta,s,w)$ the proof of Theorem \ref{M} carries over with minor modifications. One starts from an analogue of Lemma \ref{L1}, which holds if one simply replaces in the statement every occurrence of $J_{\nu}$ with $I_{\nu}$. In the proof, instead of Equation \eqref{mara}, one uses the following similar recurrence relation \cite[p. 79]{watson}, which holds for every integer $\nu$: \begin{equation}\label{maraI}
I_{\nu-1}(z)+I_{\nu+1}(z)=2I'_{\nu}(z).
\end{equation}
Later in the proof, the asymptotic formulas \eqref{a1mar} and \eqref{aa2}, which hold for every positive integer $\nu$, are replaced by
\begin{equation}\label{a1marI}
I_{\nu}(z)=-\nu\bigg(\frac{2}{\pi z}\bigg)^{\frac{1}{2}}\sin\left(z-\frac{1}{2}\nu\pi-\frac{1}{4}\pi\right)+O\bigg(\frac{1}{z^{\frac{3}{2}}}\bigg),
\end{equation}
\begin{equation}\label{aaI}
I_{-\nu}(z)=\nu\bigg(\frac{2}{\pi z}\bigg)^{\frac{1}{2}}\sin\left(z+\frac{1}{2}\nu\pi-\frac{1}{4}\pi\right)+O\bigg(\frac{1}{z^{\frac{3}{2}}}\bigg).
\end{equation}
 These asymptotic formulas arise from combining the definition \eqref{defI} with the asymptotic expressions for $Y_{\nu}$ and $K_{\nu}$ given in \cite[p. 199]{watson} and \cite[p. 202]{watson}, respectively. One can now follow the proof of Theorem \ref{M} with the appropriate minor changes due to the fact that the trigonometric function $\cos$ has been replaced with $\sin$. It can be readily checked that this change does not affect the convergence of the corresponding double series.

 Let us now consider $G_T^{\alpha,\beta}(x,\sigma,\theta,s,w)$. In order to prove the convergence of this double series we need the following analogue of Lemma \ref{L1}.

\begin{lemma}\label{L1bis}
The following identity holds:
\begin{equation*}\label{prbis}
\frac{\partial^{\alpha+\beta}}{\partial\sigma^{\alpha}\partial\theta^{\beta}} \frac{T_{\frac{3}{2}}(4\pi^2(m+\sigma)(n+\theta)x)}{(m+\sigma)^s(n+\theta)^w}=\frac{\sum_{\nu\in A}c_{\nu} Q_{\nu}(4\pi\sqrt{(m+\sigma)(n+\theta)x})}{(m+\sigma)^{s+\frac{\alpha}{2}-\frac{\beta}{2}}(n+\theta)^{w+\frac{\beta}{2}-\frac{\alpha}{2}}}+\cdots.
\end{equation*}
Here $A$ is a finite subset of $\mathbb{Z}$, the $c_{\nu}$ are constants, and the dots $\cdots$ denote a sum of terms that have the form
\begin{equation*}\label{pbis}  r_{\nu}\,\frac{Q_{\nu}(4\pi\sqrt{(m+\sigma)(n+\theta)x})}{(m+\sigma)^{\gamma}(n+\theta)^{\delta}},\quad \nu\in\mathbb{Z},\, r_{\nu}\in\mathbb{R},
\end{equation*}
where
\begin{equation*}\label{obis}
\begin{cases}
\Realp(\gamma)\geq \Realp(s)+\frac{\alpha}{2}-\frac{\beta}{2},\\
\Realp(\delta)\geq \Realp(w)+\frac{\beta}{2}-\frac{\alpha}{2},\\
\end{cases}
\end{equation*}
with at least one of the two inequalities in \eqref{obis} being strict. With $Q_{\nu}$ we denote one of the Bessel functions $ I_{\nu}, K_{\nu}, Y_{\nu}$.
\end{lemma}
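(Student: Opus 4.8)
The plan is to reduce Lemma~\ref{L1bis} to the same inductive scheme that proved Lemma~\ref{L1}, the only genuinely new ingredient being an explicit closed form for $T_{\frac32}$ in terms of the Bessel functions $Q_\nu$. I would begin with the base case $\alpha=\beta=0$. The half-integer orders make the defining Bessel functions elementary, $J_{1/2}(u)=\sqrt{2/(\pi u)}\sin u$ and $J_{3/2}(u)=\sqrt{2/(\pi u)}\bigl(u^{-1}\sin u-\cos u\bigr)$, so the substitution $u\mapsto\sqrt{x}\,u$ turns \eqref{t32} into
\begin{equation*}
T_{\frac32}(x)=\frac{2}{\pi}\int_{0}^{\infty}\sin(\sqrt{x}\,u)\Bigl(\tfrac{u}{\sqrt x}\sin\tfrac{\sqrt x}{u}-\cos\tfrac{\sqrt x}{u}\Bigr)\,du .
\end{equation*}
Writing $W=(m+\sigma)(n+\theta)x$, $a=2\pi\sqrt{W}$ and $z=2a=4\pi\sqrt{W}$, one expands the products into $\sin(a(u\pm1/u))$ and $\cos(a(u\pm1/u))$ and evaluates the resulting integrals by the classical Bessel integral formulas in \cite{watson}, of the same kind that underlie the series in \eqref{b.circle} and \eqref{vor}. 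This exhibits $T_{\frac32}(4\pi^2 W)$ as a finite linear combination, with real coefficients, of terms $W^{-p}Q_\nu(z)$ with $p\in\{0,\tfrac12,1\}$, $\nu\in\{0,1,2\}$ and $Q\in\{Y,K\}$ (see \cite{bessel2bbskaz}). Since $W^{-p}=(m+\sigma)^{-p}(n+\theta)^{-p}x^{-p}$, the $p=0$ part is the main term $\sum_\nu c_\nu Q_\nu(z)$ and each $p>0$ part is an admissible error term --- in fact with \emph{both} exponent inequalities of Lemma~\ref{L1bis} strict. Dividing by $(m+\sigma)^{s}(n+\theta)^{w}$ settles $\alpha=\beta=0$.

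Next I would run the induction on $k=\alpha+\beta$ exactly as in the proof of Lemma~\ref{L1}. Assume the identity for $k$, write $k+1=\alpha+\beta$ with, say, $\alpha\ge1$, and apply $\partial/\partial\sigma$ to the identity for $(\alpha-1,\beta)$. By the chain rule together with the contiguous relations $2Y_\nu'(z)=Y_{\nu-1}(z)-Y_{\nu+1}(z)$ and $2K_\nu'(z)=-K_{\nu-1}(z)-K_{\nu+1}(z)$ (cf.\ \eqref{mara}, \eqref{maraI}), differentiating a Bessel factor $Q_\nu(4\pi\sqrt{(m+\sigma)(n+\theta)x})$ produces $\pi\sqrt{x}$ times a real combination of $Q_{\nu\pm1}$ with the exponent of $(m+\sigma)$ raised by $\tfrac12$ and that of $(n+\theta)$ lowered by $\tfrac12$ --- the same bookkeeping as in \eqref{eins} --- while differentiating a power $(m+\sigma)^{-\gamma}$ contributes an extra $(m+\sigma)^{-1}$, an error term as in \eqref{zwei}. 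The family $\{I_\nu,K_\nu,Y_\nu\}_{\nu\in\mathbb{Z}}$ is closed under $d/dz$ --- the one point to record is that $I_\nu'$ is a real combination of $Y_{\nu\pm1}$ and $K_{\nu\pm1}$ --- so no term of a new type ever appears and the finite index set $A$ only spreads by $\pm1$ at each step. Tracking the exponents precisely as in the passage from \eqref{coo} to \eqref{o}, the displayed inequalities of Lemma~\ref{L1bis} are preserved under $k\mapsto k+1$, with strictness retained in at least one coordinate; collecting terms yields the statement for $k+1$.

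The main obstacle is the base case. Unlike $J_\nu$, which carries the textbook recurrence \eqref{mara}, $T_{\frac32}$ is given only through the integral \eqref{t32}, so one must actually evaluate the oscillatory integrals $\int_0^\infty\sin(a(u\pm1/u))\,du$ and $\int_0^\infty u\bigl(\cos(a(u-1/u))-\cos(a(u+1/u))\bigr)\,du$ --- which converge only conditionally, the $u$-weighted ones only after the two cosines are combined --- identify the outcome as Bessel functions of $2a$, and verify that $\sigma$- (equivalently $\theta$-) differentiation never leaves the $\{I_\nu,K_\nu,Y_\nu\}$ family. Once this is done, the remainder is a transcription of the argument for Lemma~\ref{L1}; note in particular that here the coefficients $c_\nu$ are only real --- the $Y$- and $K$-pieces occur with opposite signs --- which is why the non-negativity clause of Lemma~\ref{L1} is absent from the statement.
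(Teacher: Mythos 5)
Your proposal is correct and follows essentially the same route as the paper: first express $T_{\frac32}$ as a finite real linear combination of $Y_\nu$, $K_\nu$ (and negative powers of the argument), then run the induction of Lemma \ref{L1} using the contiguous relations for $Y_\nu$ and $K_\nu$. The only difference is that you rederive the closed form for $T_{\frac32}$ from the integral \eqref{t32}, whereas the paper simply quotes it (Equation (5.9) of \cite{bessel2bbskaz}, reproduced as \eqref{a6} here), so the ``main obstacle'' you identify is handled by citation rather than computation.
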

The first step in the proof of this lemma is to recall a formula for $T_{\frac{3}{2}}(y^2)$ as a linear combination of Bessel functions.  From \cite[p.~90, Equation (5.9)]{bessel2bbskaz}, if $y=2\pi\sqrt{nx/pq}$, where $p$ and $q$ are primes, then
\begin{equation*}\label{T32}
T_{\frac{3}{2}}(y^2)=\df{1}{2y^2}I_1(2y)-\df{1}{y}I_1^{\prime}(2y)+Y_1(2y)-\df{2}{\pi}K_1(2y).
\end{equation*}
Lemma \ref{L1bis} can now be proved by induction, similarly to Lemma \ref{L1}. One needs to use, instead of \eqref{mara}, the corresponding recurrence  relations  for the functions $I_{\nu}, K_{\nu}, Y_{\nu}$ \cite[p. 79 formula (2); p. 66 formula (2)]{watson}. To finish the proof of the convergence of $G_T^{\alpha,\beta}(x,\sigma,\theta,s,w)$, one can now follow the same steps as in the proof of the convergence of \eqref{cc}, using the asymptotic expansions given in \eqref{a1marI}, \eqref{aaI}, \cite[p. 199]{watson}, and \cite[p. 202]{watson}.
\end{proof}

\section{Application: a balanced identity of order 1}\label{last}
We record the formulas for $k=0$ from Theorems \ref{TI} and \ref{TT}.  As indicated above, they were proved in \cite[pp.~70--71, Theorems 2.1, 2.3]{bessel2bbskaz} under hypotheses that were stronger than necessary.

\begin{theorem}\label{even'}
Let $I_1(x)$ be defined by \eqref{defI}. If $0<\theta,$ $\sigma <1$
and $x>0,$ then
\begin{align}\label{ccc}
&{\sum_{nm\leq x}}^{\prime}\cos(2\pi n\theta)\cos(2\pi m\sigma) \\
=&\frac14+\frac{\sqrt{x}}{4}\sum_{n,m\geq
0}\left\{\frac{I_1(4\pi\sqrt{(n+\theta)(m+\sigma)x})}{\sqrt{(n+\theta)(m+\sigma)}}
+\frac{I_1(4\pi\sqrt{(n+1-\theta)(m+\sigma)x})}{\sqrt{(n+1-\theta)(m+\sigma)}}\right.\nonumber
\\
&\qquad \qquad +\left.
\frac{I_1(4\pi\sqrt{(n+\theta)(m+1-\sigma)x})}{\sqrt{(n+\theta)(m+1-\sigma)}}+
\frac{I_1(4\pi\sqrt{(n+1-\theta)(m+1-\sigma)x})}{\sqrt{(n+1-\theta)(m+1-\sigma)}}\right\}.\nonumber
\end{align}
\end{theorem}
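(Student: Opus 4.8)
The plan is to prove Theorem~\ref{even'} as the special case $k=0$ of Theorem~\ref{TI}, so the essential work is to establish \eqref{funI} when $k=0$, i.e. the identity \eqref{ccc}. First I would observe that the double series on the right-hand side of \eqref{ccc} converges (uniformly on compact subsets of $(0,1)^2$ in the variables $\sigma,\theta$) by the $I_1$-analogue of Theorem~\ref{M} discussed above, namely the convergence statement for $G_I^{\alpha,\beta}(x,\sigma,\theta,s,w)$ specialized to $\alpha=\beta=0$ and $s=w=\tfrac12$; this is the content of the modifications to the proof of Theorem~\ref{M} outlined for $G_I^{\alpha,\beta}$ (replace $J_\nu$ by $I_\nu$, use \eqref{maraI}, and use the asymptotic formulas \eqref{a1marI}, \eqref{aaI}). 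With convergence of the iterated series in hand, it suffices, exactly as in the proof of Theorem~\ref{BI}, to identify both sides of \eqref{ccc}, viewed as functions of $\theta$ on $(0,1)$ for each fixed $\sigma$ and $x$, with the same Fourier cosine series, and then (if one also wants the reversed order of summation) repeat the computation in the roles of $\sigma$ and $\theta$ interchanged.

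Concretely, I would fix $\sigma\in(0,1)$ and $x>0$ and regard the left side $F(\theta):={\sum_{nm\le x}}'\cos(2\pi n\theta)\cos(2\pi m\sigma)$ as a function on $[0,1]$; it is piecewise smooth with jump discontinuities only at the finitely many points $\theta=j/n$ with $n\le x$, and the prime on the summation is precisely the normalization that makes $F$ equal to the average of its one-sided limits at those points, which is what a Fourier series converges to. Computing the Fourier cosine coefficients $\int_0^1 F(\theta)\cos(2\pi k\theta)\,d\theta$ reduces, by orthogonality, to evaluating $\sum_{m:\,m\le x/k}\cos(2\pi m\sigma)\lfloor x/(mk)\rfloor$-type expressions, and then one compares with the cosine coefficients of the right side. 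For the right side one uses uniform convergence to integrate term by term, and each term $\tfrac{\sqrt x}{4}\,I_1(4\pi\sqrt{(n+\theta)(m+\sigma)x})/\sqrt{(n+\theta)(m+\sigma)}$ (and its three companions) is integrated against $\cos(2\pi k\theta)$ using the known Fourier-type / Hankel-type integral evaluations for $I_1$ that appear in the derivation of Entry~\ref{b.besselseries2} and in \cite[Theorem~2.1]{bessel2bbskaz}; matching the $k$-th coefficients on the two sides is then the same calculation carried out in \cite{bessel2bbskaz}, but now organized so that the outer sum is the iterated sum over $n$ then $m$ (or $m$ then $n$) rather than the ``$mn\to\infty$'' diagonal grouping used there. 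Since \cite[Theorem~2.1]{bessel2bbskaz} already proves the identity with the diagonal grouping and with hypotheses stronger than needed, the cleanest route is: (i) invoke that theorem to get \eqref{ccc} with the diagonal ordering; (ii) invoke the $I_1$-analogue of Theorem~\ref{M} to get that the iterated double series converges; (iii) conclude that the iterated series and the diagonally-grouped series have the same value, because both equal $F(\theta)$ pointwise — the diagonal one by \cite{bessel2bbskaz}, and any two convergent groupings of the same family of terms that are each shown to converge must agree once one knows one of them equals $F(\theta)$ and the other converges, via the Fourier-coefficient comparison which only sees the pointwise sum.

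The main obstacle is the bookkeeping in step (iii): convergence of an iterated double series does not by itself imply it has the same sum as a differently-grouped convergent rearrangement, so one cannot simply quote \cite{bessel2bbskaz} and be done. The honest fix, which mirrors what is done in the proof of Theorem~\ref{BI} for \eqref{iii}, is to redo the Fourier-coefficient matching directly for the iterated series: having established uniform convergence of the iterated series in $\theta$, term-by-term integration against $\cos(2\pi k\theta)$ is legitimate, the $k$-th cosine coefficient of the iterated series is computed, and it is shown to coincide with the $k$-th cosine coefficient of $F$; since both sides are piecewise smooth and suitably normalized, pointwise equality follows from equality of all Fourier coefficients. Then the symmetric argument with the Fourier cosine series in $\sigma$ (using uniform convergence in $\sigma$, again from the $I_1$-analogue of Theorem~\ref{M}) shows the reversed iterated series equals the same function, so all groupings agree. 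I expect no genuinely new difficulty beyond this: the $I_1$ Fourier/Hankel integral evaluations are exactly those used for Entry~\ref{b.besselseries2} and in \cite{bessel2bbskaz}, and the only real novelty relative to \cite{bessel2bbskaz} is that uniform convergence (Theorem~\ref{M} for $G_I$) lets us work with the iterated sum throughout and hence legitimately interchange summation and integration.
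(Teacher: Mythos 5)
Your proposal is correct and follows essentially the same route as the paper: the paper also obtains \eqref{ccc} by combining the uniform convergence of the $I_1$-analogue of Theorem \ref{M} (proved via \eqref{maraI}, \eqref{a1marI}, \eqref{aaI}) with the arguments of \cite[Theorem 2.1]{bessel2bbskaz} rerun for the iterated ordering of summation, which is precisely the fix you identify for your step (iii). (Two harmless slips in your sketch: for fixed $x$ and $\sigma$ the left side is a finite trigonometric polynomial in $\theta$, hence smooth in $\theta$ rather than merely piecewise smooth, and its $k$-th cosine coefficient is $\tfrac12{\sum_{m\le x/k}}^{\prime}\cos(2\pi m\sigma)$ with no greatest-integer factor.)
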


\begin{theorem}\label{sss}
If $0<\theta,$ $\sigma <1$ and $x>0,$ then
\begin{align}\label{sisi}
&{\sum_{nm\leq x}}^{\prime}nm\sin(2\pi n\theta)\sin(2\pi m\sigma) \\
=&\frac{x\sqrt{x}}{4}\sum_{n,m\geq
0}\left\{\frac{T_{\frac32}\big(4\pi^2(n+\theta)(m+\sigma)x\big)}{\sqrt{(n+\theta)(m+\sigma)}}
-\frac{T_{\frac32}\big(4\pi^2(n+1-\theta)(m+\sigma)x\big)}{\sqrt{(n+1-\theta)(m+\sigma)}}\right.\nonumber
\\
&\qquad \qquad -\left.
\frac{T_{\frac32}\big(4\pi^2(n+\theta)(m+1-\sigma)x\big)}{\sqrt{(n+\theta)(m+1-\sigma)}}+
\frac{T_{\frac32}\big(4\pi^2(n+1-\theta)(m+1-\sigma)x\big)}{\sqrt{(n+1-\theta)(m+1-\sigma)}}\right\}.
\nonumber
\end{align}
where $T_{3/2} $ is defined in \eqref{t32}.
\end{theorem}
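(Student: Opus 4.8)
The identity \eqref{sisi} is precisely the case $k=0$ of \eqref{funT}, so Theorem \ref{sss} is formally a special case of Theorem \ref{TT}; it is recorded separately only because it is needed, alongside Theorem \ref{even'}, for the ``first balanced derivative'' comparison carried out in Section \ref{last}. Accordingly, the plan is to extract the $k=0$ instance of the argument behind Theorem \ref{TT}. The starting point is \cite[Theorem 2.3]{bessel2bbskaz}, which is \eqref{sisi} itself but with the series on the right interpreted as $\lim_{X\to\infty}\sum$ over the hyperbolic region $(n+\theta)(m+\sigma)\le X$ (and the analogous regions for the other three terms), under hypotheses strong enough to control that truncation. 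What must be added is that the \emph{iterated} series $\sum_{m\ge0}\sum_{n\ge0}$ in \eqref{sisi} converges to the same value. The enabling fact is the uniform convergence, on compact subsets of $(0,1)^2$, of this iterated double series: this is the case $s=w=\tfrac12$, $\alpha=\beta=0$ of the $T_{\frac32}$-analogue $G_T^{\alpha,\beta}$ of Theorem \ref{M}, which follows from Lemma \ref{L1bis}, the decomposition $T_{\frac32}(y^2)=\tfrac1{2y^2}I_1(2y)-\tfrac1y I_1'(2y)+Y_1(2y)-\tfrac2\pi K_1(2y)$, and the asymptotic formulas \eqref{a1marI}, \eqref{aaI} together with the standard expansions for $Y_\nu$ and $K_\nu$.

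Granting this uniform convergence, the identity is proved by matching double Fourier sine coefficients, exactly as in \cite{bessel2bbskaz}. Write $F(\theta,\sigma)$ for the left side of \eqref{sisi} and $R(\theta,\sigma)$ for the right side. Because $nm\le x$ forces $n\le x$ and $m\le x$, $F$ is a finite trigonometric sum: it is continuous, it is already displayed in the form $\sum nm\,\sin(2\pi n\theta)\sin(2\pi m\sigma)$, and its coefficient on $\sin(2\pi N\theta)\sin(2\pi M\sigma)$ equals $NM$ if $NM<x$, equals $\tfrac12 NM$ if $NM=x$ (this is where the prime on the summation enters), and equals $0$ if $NM>x$. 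The sign pattern $+,-,-,+$ in \eqref{sisi} shows at once that $R(1-\theta,\sigma)=-R(\theta,\sigma)=R(\theta,1-\sigma)$: the substitution $\theta\mapsto 1-\theta$ swaps the two $T_{\frac32}$-summands inside each of the two $n$-pairs of \eqref{sisi} and, since the members of a pair carry opposite signs, sends each pair to its own negative; likewise for $\sigma\mapsto 1-\sigma$. Hence, in the full two-variable period-$1$ Fourier system, both $F$ and $R$ are supported on the sine--sine products, and it suffices to check that $\int_0^1\!\!\int_0^1 R(\theta,\sigma)\,4\sin(2\pi N\theta)\sin(2\pi M\sigma)\,d\theta\,d\sigma$ equals $NM$, resp. $\tfrac12 NM$, resp. $0$, for all $N,M\ge 1$. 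Uniform convergence licenses integrating $R$ term by term; performing the $n$-summation first collapses the alternating pairs into the one-variable Bessel expansion attached to $T_{\frac32}$ in \cite{bessel2bbskaz}, whose Fourier sine coefficient in $\theta$ is explicit, and a subsequent integration in $\sigma$ (after the $m$-summation) returns the stated value, matching $F$. Since $F$ and $R$ are continuous on $(0,1)^2$ and have identical Fourier coefficients, they coincide there, which is \eqref{sisi}.

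The only genuinely new step, and hence the main obstacle, is the replacement of the product-ordered summation of \cite[Theorem 2.3]{bessel2bbskaz} by the iterated summation in \eqref{sisi}; this is exactly the purpose of the $T_{\frac32}$-version of Theorem \ref{M}, and without it the term-by-term integration of $R$ is not justified. The remaining computations duplicate those of \cite{bessel2bbskaz}: the Fourier integrals of the individual $T_{\frac32}$-terms are evaluated there, and the one point requiring care is that the $I_1'$, $K_\nu$, and $Y_\nu$ pieces produced by the decomposition of $T_{\frac32}$ contribute only ``error'' terms, those governed by the strict inequality in the analogue of \eqref{o} in Lemma \ref{L1bis}, so that they are absolutely summable. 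With $s=w=\tfrac12$ the hypotheses $4\Realp(s)>1$, $4\Realp(w)>1$, and $\Realp(s)+\Realp(w)=1>\tfrac{25}{26}$ of Theorem \ref{M2} are comfortably satisfied, the last inequality being what controls the behaviour along the diagonal $nm=x$ responsible for the prime on the summation.
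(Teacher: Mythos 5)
Your proposal is correct and follows essentially the same route as the paper: Theorem \ref{sss} is recorded as the $k=0$ instance of Theorem \ref{TT}, whose proof cites \cite[Theorem~2.3]{bessel2bbskaz} for the product-ordered version and upgrades it to the iterated double series via the $T_{\frac32}$-analogue of Theorem \ref{M} (Lemma \ref{L1bis} together with the decomposition of $T_{\frac32}$ into $I_1$, $I_1'$, $Y_1$, $K_1$ and the asymptotics \eqref{a1marI}, \eqref{aaI}). The extra detail you supply on the sine--sine Fourier support and the $+,-,-,+$ symmetry is consistent with, and merely makes explicit, the coefficient matching that the paper delegates to \cite{bessel2bbskaz}.
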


From our remarks above on the uniform convergence of the right-hand side of \eqref{funI}, differentiating the identity \eqref{funI} for $k=1$ yields an identity for the left-hand side of \eqref{funT} for $k=0$.  However, we only know the precise nature of the right-hand side of \eqref{funT} because of the independent proof of \eqref{funT} in \cite{bessel2bbskaz}, but under stronger  hypotheses, as emphasized above.
Our goal is to show directly that the first mixed partial derivative of \eqref{ccc}  is equal to \eqref{sisi}.  The needed termwise differentiation is justified by Theorem \ref{TI}.

Let
\begin{equation}\label{aII}
u=4\pi\sqrt{(n+\theta)(m+\sigma)x}.
\end{equation}
Then,
\begin{align*}
\df{\partial}{\partial\theta}\left\{\df{I_1(u)}{\sqrt{(n+\theta)(m+\sigma)}}\right\}=&
\df{I^{\prime}(u)2\pi\sqrt{x}}{(n+\theta)}-\df{I_1(u)}{2(n+\theta)^{3/2}(m+\sigma)^{1/2}}
\end{align*}
and
\begin{align}
&\df{\partial^2}{\partial\sigma\partial\theta}\left\{\df{I_1(u)}{\sqrt{(n+\theta)(m+\sigma)}}\right\}=
\df{I^{\prime\prime}(u)2\pi\sqrt{x}}{(n+\theta)} 2\pi\sqrt{\df{(n+\theta)x}{(m+\sigma)}}\notag\\
&-\df{I_1^{\prime}(u)}{2(n+\theta)^{3/2}(m+\sigma)^{1/2}}2\pi\sqrt{\df{(n+\theta)x}{(m+\sigma)}}
+\df{I_1(u)}{4(n+\theta)^{3/2}(m+\sigma)^{3/2}}\notag\\
=&\df{4\pi^2 x I_1^{\prime\prime}(u)}{\sqrt{(n+\theta)(m+\sigma)}}
-\df{\pi\sqrt{x}I_1^{\prime}(u)}{(n+\theta)(m+\sigma)}+\df{I_1(u)}{4(n+\theta)^{3/2}(m+\sigma)^{3/2}}.\label{a22}
\end{align}

We now use \cite[p.~66, formula (3); p.~79, formula (3)]{watson}, respectively,
\begin{equation}
uY_1^{\prime}(u)+Y_1(u)=uY_0(u)\qquad\text{and}\qquad
uK_1^{\prime}(u)+K_1(u)=-uK_0(u).\label{YK1}
\end{equation}
Also, \cite[p.~66, formula (4); p.~79, formula (4)]{watson}, respectively,
\begin{align}\label{YK}
Y_0^{\prime}(u)=-Y_1(u) \qquad \text{and} \qquad K_0^{\prime}(u)=-K_1(u).
\end{align}
Thus, from \eqref{defI}, \eqref{YK1}, and \eqref{YK},
\begin{align}
I_{1}^{'}(u)&=-Y_{1}^{'}(u)-\df{2}{\pi}K_{1}^{'}(u)\notag\\
&=\df{1}{u}Y_1(u)-Y_0(u)+\df{2}{\pi u}K_1(u)+\df{2}{\pi}K_0(u),\label{YK2}
\end{align}
and from \eqref{YK} and \eqref{YK2},
\begin{align}\label{a3}
I_1^{\prime\prime}(u)=&\df{1}{u}Y_1^{\prime}(u)-\df{1}{u^2}Y_1(u)-Y_0^{'}(u)+\df{2}{\pi u}K_1^{'}(u)-\df{2}{\pi u^2}K_1(u)+\df{2}{\pi}K_0^{'}(u)\notag\\
=&\df{1}{u}\left(-\df{1}{u}Y_1(u)+Y_0(u)\right)-\df{1}{u^2}Y_1(u)+Y_1(u)+\df{2}{\pi u}\left(-\df{1}{u}K_1(u)-K_0(u)\right)\notag\\&-\df{2}{\pi u^2}K_1(u)-\df{2}{\pi}K_1(u)\notag\\
=&\df{2}{u^2}\left(-Y_1(u)-\df{2}{\pi}K_1(u)\right)+\df{1}{u}\left(Y_0(u)-\df{2}{\pi}K_0(u)\right)+Y_1(u)-\df{2}{\pi}K_1(u).
\end{align}

Now return to \eqref{a22} and substitute from \eqref{aII}, \eqref{defI}, \eqref{YK2}, and \eqref{a3} to deduce that
\begin{align*}
&\df{\partial^2}{\partial\sigma\partial\theta}\left\{\df{I_1(u)}{\sqrt{(n+\theta)(m+\sigma)}}\right\}\notag\\
=&\df{16\pi^3 x^{3/2}}{u}\left\{-\df{2}{u^2}Y_1(u)-\df{4}{\pi u^2}K_1(u)+\df{1}{u}Y_0(u)-\df{2}{\pi u}K_0(u)+Y_1(u)-\df{2}{\pi}K_1(u)\right\}\notag\\
&-\df{16\pi^3x^{3/2}}{u^2}\left\{\df{1}{u}Y_1(u)-Y_0(u)+\df{2}{\pi u}K_1(u)+\df{2}{\pi}K_0(u)\right\}\notag\\
&-\df{16\pi^3x^{3/2}}{u^3}\left\{Y_1(u)+\df{2}{\pi }K_1(u)\right\}\notag\\
=&x^{3/2}\left\{-\df{64\pi^3}{u^3}Y_1(u)-\df{128\pi^2}{u^3}K_1(u)+\df{16\pi^3}{u}Y_1(u)
-\df{32\pi^2}{u}K_1(u)\right.\notag\\&+\left.\df{32\pi^3}{u^2}Y_0(u)-\df{64\pi^2}{u^2}K_0(u)\right\},
\end{align*}
or
\begin{align}\label{a5}
&\df{1}{4\pi^2}\df{\partial^2}{\partial\sigma\partial\theta}\left\{\df{I_1(u)}{\sqrt{(n+\theta)(m+\sigma)}}\right\}\notag\\
=&x^{3/2}\left\{-\df{16\pi}{u^3}Y_1(u)-\df{32}{u^3}K_1(u)+\df{4\pi}{u}Y_1(u)
-\df{8}{u}K_1(u)+\df{8\pi}{u^2}Y_0(u)-\df{16}{u^2}K_0(u)\right\}.
\end{align}

We now turn to $T_{3/2}(z)$.
If we set $u=2y$ in \eqref{T32}, we find that
\begin{equation}\label{a6}
T_{3/2}(\tf14 u^2)=\df{2}{u^2}I_1(u)-\df{2}{u}I_1^{\prime}(u)+Y_1(u)-\df{2}{\pi}K_1(u).
\end{equation}
Appealing to \eqref{YK2}, we see from \eqref{a6} that
\begin{align}\label{a77}
T_{3/2}(\tf14 u^2)=&-\df{2}{u^2}Y_1(u)-\df{4}{\pi u^2}K_1(u)-\df{2}{u^2}Y_1(u)+\df{2}{u}Y_0(u)-\df{4}{\pi u^2}K_1(u)-\df{4}{\pi u}K_0(u)\notag\\&+Y_1(u)-\df{2}{\pi}K_1(u)\notag\\
=& -\df{4}{u^2}Y_1(u)-\df{8}{\pi u^2}K_1(u)+\df{2}{u}Y_0(u)-\df{4}{\pi u}K_0(u)+Y_1(u)-\df{2}{\pi}K_1(u).
\end{align}
Recalling the definition \eqref{aII}, we can write \eqref{a77} in the form
\begin{align}\label{a7}
&\df{T_{3/2}(\tf14 u^2)}{\sqrt{(n+\theta)(m+\sigma)}}=\df{4\pi\sqrt{x}T_{3/2}(\tf14 u^2)}{u}\notag\\
=&\sqrt{x}\left(-\df{16\pi}{u^3}Y_1(u)-\df{32}{u^3}K_1(u)+\df{8\pi}{u^2}Y_0(u)-\df{16}{u^2}K_0(u)+\df{4\pi}{u}Y_1(u)-\df{8}{u}K_1(u)\right).
\end{align}

Hence, from \eqref{a5} and \eqref{a7}, we conclude that the first balanced derivative of the terms in the first series on the right side of \eqref{ccc}, multiplied by $1/(4\pi^2)$, are equal to the terms of the first series on the right-hand side of \eqref{sisi}.  If we successively set
\begin{align*} u=&4\pi\sqrt{(n+1-\theta)(m+\sigma)x},\\  u=&4\pi\sqrt{(n+\theta)(m+1-\sigma)x},\\  u=&4\pi\sqrt{(n+1-\theta)(m+1-\sigma)x},
\end{align*}
we can make analogous conclusions for the second, third, and fourth series terms on the right-hand sides of \eqref{ccc} and \eqref{sisi}.  In conclusion, we have shown that taking the first balanced derivative of \eqref{ccc} yields  \eqref{sisi}, as expected.

\section{General Theorems of Chandrasekharan and Narasimhan}

 We offer two general theorems of Chandrasekharan and Narasimhan \cite{annals2}, which we employ in the sequel.  First, we
 provide the general setting \cite[p.~93--96]{annals2}.

 \begin{definition}\label{def} Let $a(n)$ and $b(n)$ be two sequences of complex numbers, where not all terms are equal to 0 in either sequence. Let $\lambda_n$ and $\mu_n$ be two sequences of positive numbers, strictly increasing to $\infty$. Let $\delta>0$. Throughout, $s=\sigma +it$, where $\sigma$ and $t$ are both real. Let
 \begin{equation}\label{Delta}
 \Delta(s):=\prod_{n=1}^{N}\Gamma(\alpha_n s+\beta_n),
 \end{equation}
 where $N\geq 1$, $\beta_n, 1\leq n\leq N$, is any complex number, and $\alpha_n>0, 1\leq n \leq N$.  Assume that
 \begin{equation*}\label{A}
 A:=\sum_{n=1}^N\alpha_n\geq 1.
 \end{equation*}
 Let
 \begin{equation*}\label{phipsi}
 \varphi(s):=\sum_{n=1}^{\infty}\df{a(n)}{\lambda_n^s}\qquad \text{and}\qquad \psi(s):=\sum_{n=1}^{\infty}\df{b(n)}{\mu_n^s}
 \end{equation*}
  converge absolutely in some half-plane, and suppose they satisfy the functional equation
 \begin{equation}\label{fe}
 \Delta(s)\varphi(s)=\Delta(\delta-s)\psi(\delta-s).
 \end{equation}
 Furthermore, assume that
   there exists in the $s$-plane a domain $\mathfrak{D}$, which is the exterior of a compact set $S$, in which there exists an analytic function $\chi$ with the properties
 \begin{equation*}
 \lim_{|t|\to\infty}\chi(s)=0,
 \end{equation*}
 uniformly in every interval $-\infty<\sigma_1\leq \sigma\leq\sigma_2<\infty$, and
 \begin{align*}
 \chi(s)&=\Delta(s)\varphi(s), \qquad \sigma > \alpha,\\
 \chi(s)&=\Delta(\delta-s)\psi(\delta-s), \qquad \sigma<\beta,
 \end{align*}
 where $\alpha$ and $\beta$ are particular constants.
 \end{definition}

For $\rho\geq0$, let
\begin{equation}\label{omega6}
A_{\rho}(x):=\df{1}{\Gamma(\rho+1)}\sum_{\lambda_n\leq x}a(n)(x-\lambda_n)^{\rho},
\end{equation}
where the prime $\prime$ indicates that if $x=\lambda_n$ and $\rho=0$, the last term is to be multiplied by $\tf12$. Furthermore, let
 \begin{equation}\label{poles}
 Q_{\rho}(x):=\df{1}{2\pi i}\int_{\mathcal{C}}\df{\Gamma(s)\varphi(s)}{\Gamma(s+\rho+1)}x^{s+\rho}ds,
 \end{equation}
 where $\mathcal{C_{\rho}}$ is a closed curve enclosing all of the singularities of the integrand to the right of $\sigma=-\rho-1-k$, where $k$ is chosen such that $k>|\delta/2-1/(4A)|$, and all of the singularities of $\varphi(s)$ lie in $\sigma>-k$.
   In Sections \ref{first}--\ref{third}, $\rho=0$, and so in these sections we write $A_{\rho}(x)=A(x)$ and $Q_{\rho}(x)=Q(x)$.  In Section \ref{fourth}, we consider the more general case when $\rho>0$.

 \begin{theorem}\label{omega1}\cite[p.~98, Theorem 3.1]{annals2}
 Suppose that $\varphi(s)$ and $\psi(s)$ satisfy \eqref{fe}.  Suppose that $\left\{\mu_n\right\}$ contains a subset $\left\{\mu_{n_k}\right\}$ such that no number $\mu_n^{1/(2A)}$ is represented as a linear combination of the numbers $\left\{\mu_{n_k}^{1/(2A)}\right\}$ with the coefficients $\pm1$, unless $\mu_n^{1/(2A)}=\mu_{n_r}^{1/(2A)}$ for some $r$, in which case $\mu_n^{1/(2A)}$ has no other representation. Suppose furthermore that
 \begin{equation}\label{omega2}
 \sum_{n=1}^{\infty}\df{|\Realp\,b(n_k)|}{\mu_{n_k}^{(A\delta+\rho+1/2)/(2A)}}=+\infty.
 \end{equation}

 Set
 \begin{equation}\label{omega3}
 \theta:=\df{A\delta+\rho(2A-1)-\tf12}{2A}.
 \end{equation}
 Then
 \begin{align}
 \varlimsup_{x\to\infty}\df{\Realp\{A_{\rho}(x)-Q_{\rho}(x)\}}{x^{\theta}}&=+\infty, \label{omega4}\\
 \varliminf_{x\to\infty}\df{\Realp\{A_{\rho}(x)-Q_{\rho}(x)\}}{x^{\theta}}&=-\infty. \label{omega5}
 \end{align}
 If in assumption \eqref{omega2}, we replace $\Realp\,b(n_k)$ by  $\Imp\,b(n_k)$,  then \eqref{omega4} and \eqref{omega5} remain valid.
 \end{theorem}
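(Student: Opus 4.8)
The result is quoted from \cite[Theorem~3.1]{annals2}; what follows sketches the argument one gives for it, following Chandrasekharan and Narasimhan. The plan is to convert the summatory function $A_\rho(x)$, by means of the functional equation \eqref{fe}, into an oscillatory series of Bessel (equivalently trigonometric) type, and then to exhibit arbitrarily large one-sided oscillations of that series by a Landau-type averaging argument whose success rests on the arithmetic independence hypothesis imposed on the subset $\{\mu_{n_k}^{1/(2A)}\}$.

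First I would establish a truncated Voronoi-type identity. Starting from the Riesz mean \eqref{omega6}, the contour integral \eqref{poles} for $Q_\rho(x)$, the functional equation \eqref{fe}, and the decay and analyticity of $\chi$ from Definition~\ref{def}, one shifts the line of integration, collects residues, and invokes the classical asymptotics of the Bessel functions arising from inverting $\Delta(s)$, to obtain, for every $x>0$ and every cutoff $Y=Y(x)\to\infty$, an identity of the shape
\[
A_\rho(x)-Q_\rho(x)=c\,x^{\theta}\sum_{\mu_n\le Y}\frac{b(n)}{\mu_n^{(A\delta+\rho+1/2)/(2A)}}\cos\bigl(4\pi A(\mu_n x)^{1/(2A)}-\gamma_\rho\bigr)+R(x,Y),
\]
with $c\neq 0$ and $\gamma_\rho$ real and explicit. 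Here the power $x^{\theta}$, with $\theta$ as in \eqref{omega3}, and the exponent $(A\delta+\rho+1/2)/(2A)$ on $\mu_n$ matching \eqref{omega2}, are forced by the homogeneity of the Bessel kernel, and $R(x,Y)=o(x^{\theta})$ uniformly provided $Y$ is coupled to $x$ suitably; since the displayed series is in general only conditionally convergent, one keeps it truncated and estimates the tail $\mu_n>Y$ separately. Producing this identity with the stated uniformity is the technical core and is carried out in the earlier portions of \cite{annals2}.

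Assume now, for contradiction, that $\varlimsup_{x\to\infty}\Realp\{A_\rho(x)-Q_\rho(x)\}/x^{\theta}=L<\infty$. Taking real parts in the identity above and setting $t=x^{1/(2A)}$, the normalized quantity agrees, up to the negligible $R$, with a generalized trigonometric polynomial $F(t)=c\sum_{\mu_n\le Y}\Realp b(n)\,\mu_n^{-(A\delta+\rho+1/2)/(2A)}\cos(4\pi A\,\mu_n^{1/(2A)}t-\gamma_\rho)$ whose frequencies are the numbers $\mu_n^{1/(2A)}$. Fixing $K$, I would choose $\epsilon_k\in\{0,1\}$ with $(-1)^{\epsilon_k}\Realp b(n_k)=|\Realp b(n_k)|$ and average $F$ over $t\in[0,T]$, $T\to\infty$, against the nonnegative kernel $g_K(t)=\prod_{k\le K}\bigl(1+\cos(4\pi A\,\mu_{n_k}^{1/(2A)}t-\gamma_\rho-\epsilon_k\pi)\bigr)$. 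When expanded, $g_K$ is a trigonometric polynomial whose nonzero frequencies are exactly the nontrivial $\pm1$-combinations of the $\mu_{n_k}^{1/(2A)}$; the hypothesis that no $\mu_n^{1/(2A)}$ is such a combination (apart from the trivial coincidences permitted in the theorem) then forces every term of $F$ with $n\notin\{n_1,\dots,n_K\}$ to average to $o(1)$, while each selected term contributes its full amplitude. This forces $\varlimsup_{t\to\infty}F(t)\ge\tfrac{|c|}{2}\sum_{k\le K}|\Realp b(n_k)|\,\mu_{n_k}^{-(A\delta+\rho+1/2)/(2A)}-o(1)$; letting $K\to\infty$ and invoking the divergence of \eqref{omega2} yields $+\infty$, contradicting $L<\infty$ and proving \eqref{omega4}. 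For \eqref{omega5} one repeats the argument with $F$ replaced by $-F$, and the $\Imp b(n_k)$ variant is obtained by isolating the sine component of the oscillatory series, i.e.\ by shifting $\gamma_\rho$ by $\pi/2$.

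I expect the main obstacle to be the coordination of the first and last steps: one must produce the Voronoi-type identity with error $R(x,Y)=o(x^{\theta})$ for a cutoff $Y=Y(x)$ that, for each fixed $K$, already captures the frequencies $\mu_{n_1}^{1/(2A)},\dots,\mu_{n_K}^{1/(2A)}$ once $T$ (hence $x$) is large, yet still leaves a negligible tail, all for a series that is only conditionally convergent. Equally essential is the use of the $\pm1$-combination hypothesis to exclude resonance between the background frequencies and the selected ones; without it the background terms could reinforce the kernel and swamp the main lower bound, and the conclusion would fail.
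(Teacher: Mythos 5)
The paper itself gives no proof of this statement: it is quoted verbatim from Chandrasekharan and Narasimhan \cite{annals2}, so the only ``internal'' proof is the citation. Your sketch does reproduce the skeleton of the original argument --- expand $A_{\rho}(x)-Q_{\rho}(x)$ via the functional equation \eqref{fe} into oscillations with frequencies $\mu_n^{1/(2A)}$, then average against the nonnegative Hardy--Landau kernel $\prod_{k\le K}\bigl(1+\cos(\cdots)\bigr)$, using the $\pm1$-combination hypothesis to annihilate cross terms and the divergence of \eqref{omega2} to make the diagonal contribution unbounded --- and your exponent bookkeeping (the normalization $x^{\theta}$ with $\theta$ as in \eqref{omega3} against amplitudes $|b(n)|\mu_n^{-(A\delta+\rho+1/2)/(2A)}$) is internally consistent with the statement.

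The one step that does not survive as literally written is the pointwise truncated identity with $R(x,Y)=o(x^{\theta})$. Under the hypotheses of the theorem (arbitrary $A\ge 1$, arbitrary $\rho\ge 0$, no density or size assumptions on $\lambda_n$, $a(n)$ beyond Definition \ref{def}) such a truncation is not available in general --- for $\rho$ below the critical index the Bessel series attached to \eqref{fe} need not even converge, and establishing a pointwise error $o(x^{\theta})$ can be as deep as the Omega theorem itself. Chandrasekharan and Narasimhan avoid it: assuming for contradiction the one-sided bound $\Realp\{A_{\rho}(x)-Q_{\rho}(x)\}\le Cx^{\theta}$, they integrate the nonnegative function $Cx^{\theta}-\Realp\{A_{\rho}(x)-Q_{\rho}(x)\}$ against the kernel over a long interval; the integration raises the effective Riesz index, so the identity they substitute is the absolutely and uniformly convergent one for higher Riesz means established earlier in \cite{annals2}, and term-by-term integration then needs no truncation at all. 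The independence hypothesis and \eqref{omega2} enter at that stage exactly as in your sketch, and the $\Imp$ and $\varliminf$ variants follow by the phase shift and sign change you describe. In short: right skeleton, but the smoothing must come from the kernel integration (equivalently, from higher Riesz means), not from a pointwise truncated Voronoi formula.
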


The conclusion \eqref{omega4} is equivalent to the following statement: There exists a sequence $\{x_n\}$ tending to $\infty$, such that there does not exist any positive constant $C$ such that, for all $n$,
$$\Realp\{A_{\rho}(x_n)-Q_{\rho}(x_n)\}\leq Cx_n^{\theta}. $$
In such a situation, we write
$$\Realp\{A_{\rho}(x_n)-Q_{\rho}(x_n)\}=\Omega_+(x^{\theta}).$$
Similar remarks hold for \eqref{omega5}.

 \begin{theorem}\label{bigO}\cite[p.~106, Theorem 4.1]{annals2}
 Suppose that the functional equation
\begin{equation*}\label{fe1}
 \Delta(s)\varphi(s)=\Delta(\delta-s)\psi(\delta-s)
 \end{equation*}
 is satisfied with $\delta>0$, and that $\varphi(s)$ is an entire function. Then, if $A(x)$ is defined by \eqref{omega6} and $Q(x)$ is defined by \eqref{poles}, as $x\to\infty$,
 \begin{equation}\label{bigO1}
 A(x)-Q(x)=O\left(x^{\delta/2-1/(4A)+2Au\eta}\right)+O\left(\sum_{x<\lambda_n\leq x^{\prime}}|a(n)|\right),
 \end{equation}
 for every $\eta\geq0$, where
 \begin{equation}\label{bigO3}
 u:=\beta-\frac12\delta-\frac{1}{4A},
 \end{equation}
 and $\beta$ is chosen so that $\sum_{n=1}^{\infty}|b(n)|\mu_n^{-\beta}$ converges. Furthermore,
 \begin{equation}\label{bigO2}
  x^{\prime}=x+O(x^{1-\eta-1/(2A)}).
 \end{equation}
   \end{theorem}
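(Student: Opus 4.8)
The plan is to run the classical ``smooth, estimate, and difference back'' scheme. The quantity $A(x)-Q(x)=A_0(x)-Q_0(x)$ resists a direct attack because $\varphi$ converges only in a half-plane and the Voronoi-type series produced by the functional equation need not converge at the level $\rho=0$; both obstacles vanish if one passes to the Riesz mean $A_\rho$ of a sufficiently large order. So I would first record a de-smoothing identity. Fix a positive integer $q$ and a step $h>0$, and let $\Delta_h^{q}$ denote the $q$-th forward difference with increment $h$. Since $(x-\lambda_n)_+^{q}$ is a monic degree-$q$ polynomial in $x$ once $x\ge\lambda_n$, its $q$-th difference equals $q!\,h^{q}$ for $\lambda_n\le x$, is $O_q(h^{q})$ for $x<\lambda_n\le x+qh$, and vanishes otherwise; dividing by $\Gamma(q+1)h^{q}$ gives
\begin{equation*}
\frac{\Delta_h^{q}A_q(x)}{h^{q}}=A_0(x)+O_q\!\Bigg(\sum_{x<\lambda_n\le x+qh}|a(n)|\Bigg).
\end{equation*}
Because $\varphi$ is entire, $Q_q$ is a polynomial of degree $q$ in $x$ (the only singularities of the integrand in \eqref{poles} being simple poles at $s=0,-1,\dots,-q$), so $\Delta_h^{q}Q_q(x)/h^{q}=Q_q^{(q)}(x)=Q_0(x)$ exactly. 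Subtracting and taking $h\asymp x^{1-\eta-1/(2A)}$ produces the truncation range $x'=x+O(x^{1-\eta-1/(2A)})$ and the second error term of \eqref{bigO1}:
\begin{equation*}
A_0(x)-Q_0(x)=\frac{\Delta_h^{q}\big(A_q-Q_q\big)(x)}{h^{q}}+O_q\!\Bigg(\sum_{x<\lambda_n\le x'}|a(n)|\Bigg).
\end{equation*}

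Everything then reduces to bounding $A_q(x)-Q_q(x)$. I would derive a Voronoi-type identity: start from the Mellin--Perron formula $A_q(x)=\frac{1}{2\pi i}\int_{\Realp s=c}\frac{\Gamma(s)\varphi(s)}{\Gamma(s+q+1)}x^{s+q}\,ds$ with $c$ large, shift the contour far to the left (the singularities crossed being exactly those assembled into $Q_q$), apply the functional equation $\varphi(s)=\Delta(\delta-s)\psi(\delta-s)/\Delta(s)$, expand $\psi(\delta-s)$ into its now absolutely convergent Dirichlet series, and interchange summation and integration to obtain
\begin{equation*}
A_q(x)-Q_q(x)=x^{q}\sum_{m=1}^{\infty}b(m)\,\mu_m^{-\delta}\,G_q(\mu_m x),\qquad
G_q(y)=\frac{1}{2\pi i}\int\frac{\Gamma(s)\,\Delta(\delta-s)}{\Gamma(s+q+1)\,\Delta(s)}\,y^{s}\,ds.
\end{equation*}
Stirling's formula applied to the $\Gamma$-quotient (this is where $A=\sum\alpha_n\ge1$ enters) shows that $G_q$ is, up to lower-order terms, a Bessel-type kernel --- a Meijer $G$-function when $A>1$ --- of ``order'' $\approx 2A\delta+2q$ with argument $\asymp y^{1/(2A)}$; the bound $J_\nu(z)\ll z^{-1/2}$ then yields $G_q(y)=O(y^{-\gamma_q})$ with $\gamma_q=(2q+1-2A\delta)/(4A)$. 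Once $q\ge 2Au$ one has $\gamma_q\ge\beta-\delta$, hence $\sum_m|b(m)|\mu_m^{-\delta}|G_q(\mu_m x)|\ll x^{-\gamma_q}\sum_m|b(m)|\mu_m^{-\beta}<\infty$ and
\begin{equation*}
A_q(x)-Q_q(x)=O\big(x^{\,q-\gamma_q}\big)=O\big(x^{\,\delta/2-1/(4A)+q(1-1/(2A))}\big).
\end{equation*}
Choosing $\beta$ so that $2Au$ is the least admissible positive integer $q$ and feeding this into the differencing identity, the smooth part contributes $O\big(x^{\delta/2-1/(4A)+q(1-1/(2A))}h^{-q}\big)=O\big(x^{\delta/2-1/(4A)+q\eta}\big)=O\big(x^{\delta/2-1/(4A)+2Au\eta}\big)$, which together with the truncation error is precisely \eqref{bigO1}.

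I expect the real work to be concentrated in the middle step when $A>1$. Justifying the leftward contour shift and the term-by-term Mellin inversion requires polynomial growth bounds for $\varphi$ and $\psi$ along vertical lines, obtained from the functional equation together with the hypothesis that $\Delta(s)\varphi(s)$ continues to a function $\chi$ tending to $0$ uniformly in vertical strips, via a Phragm\'en--Lindel\"of argument; and extracting the asymptotics of $G_q$ demands a uniform Stirling analysis of a multiple $\Gamma$-quotient whose large-argument behaviour is that of a Meijer $G$-function rather than a single Bessel function. The remaining ingredients --- the combinatorics of the finite difference (including the harmless boundary contribution when $x=\lambda_n$), the polynomial nature of $Q_q$, and the bookkeeping that converts the kernel bound together with $\sum|b(n)|\mu_n^{-\beta}<\infty$ into the stated exponent --- are routine.
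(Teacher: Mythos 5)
First, a remark on provenance: the paper does not prove Theorem \ref{bigO} at all --- it is quoted verbatim from Chandrasekharan and Narasimhan \cite[Theorem 4.1]{annals2} --- so the comparison is with their original argument. Your skeleton is exactly theirs: pass to a Riesz mean $A_q$ of integral order $q$ large enough that the Voronoi--Bessel identity converges absolutely, note that $Q_q$ is a polynomial of degree $q$ whose $q$-th difference reproduces $Q_0$ exactly, recover $A_0-Q_0$ by a $q$-fold finite difference of step $h\asymp x^{1-\eta-1/(2A)}$, and absorb the boundary terms into the truncation sum $\sum_{x<\lambda_n\le x'}|a(n)|$. The identity for $A_q-Q_q$, the role of the hypothesis $A\ge 1$ and of the analytic continuation $\chi$, and the arithmetic $q-\gamma_q=\delta/2-1/(4A)+q(1-1/(2A))$ all check out.

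The gap is in the final estimate of the smooth part. Bounding $|\Delta_h^q(A_q-Q_q)(x)|$ by $2^q\sup_{[x,x+qh]}|A_q-Q_q|$ yields the exponent $\delta/2-1/(4A)+q\eta$, where $q$ is forced to be an \emph{integer} with $q\ge 2Au$. The theorem asserts the exponent $2Au\eta$, and $2Au$ is in general not an integer; indeed it is not in any application in this paper (in Theorem \ref{O1} one has $A=1$, $\beta=1+\epsilon$, so $2Au=\tfrac12+2\epsilon$, and your version would force $q=1$, giving after optimization of $\eta$ the bound $O(x^{3/8+\epsilon})$ in place of $O(x^{1/3+\epsilon})$). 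Your proposed remedy --- adjust $\beta$ so that $2Au$ becomes an integer --- moves in the wrong direction: $\beta$ is bounded below by the abscissa of absolute convergence of $\psi$ and can only be increased, which increases $u$ and hence worsens the exponent. What Chandrasekharan and Narasimhan actually do at this step is difference the Bessel series term by term: the $q$-fold difference of the oscillatory kernel attached to $\mu_n$ gains a factor $\min\bigl(1,\,(h\,\mu_n^{1/(2A)}x^{1/(2A)-1})^q\bigr)$, and splitting the sum over $n$ at the point where this factor becomes $1$ is precisely what produces the exponent $2Au\eta$ with no integrality restriction on $2Au$. Without this term-by-term differencing (or an equivalent device), your argument proves a strictly weaker statement than the one claimed, and in particular would not suffice for the applications made of the theorem later in the paper.
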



 \section{The First $\Omega$ and ``Big O'' Theorems and Conjecture}\label{first}
 Let $\chi_1$ and $\chi_2$ be primitive, non-principal even characters modulo  $p$ and $q$, respectively.
Let $\tau(\chi_1)$ and $\tau(\chi_2)$ denote their corresponding Gauss sums.  Lastly, we use the notation
\begin{equation}\label{chichi}
d_{\chi_1,\chi_2}(n)=\sum_{d|n}\chi_1(d)\chi_2(n/d).
\end{equation}

\begin{theorem}\label{at} Assume that $\chi_1$ and $\chi_2$ are non-principal  even characters modulo the primes $p$ and $q$, respectively. Let
\begin{equation}\label{D}
 \mathbb{D}_{\chi_1,\chi_2}(x):=\sum_{n\leq x}d_{\chi_1,\chi_2}(n).
 \end{equation}
Then
\begin{align}
\varlimsup_{x\to\infty}\df{\Realp{\, \mathbb{D}_{\chi_1,\chi_2}(x)}}{x^{1/4}}&=+\infty, \label{aa}\\
\varliminf_{x\to\infty}\df{\Realp{\, \mathbb{D}_{\chi_1,\chi_2}(x)}}{x^{1/4}}&=-\infty. \label{ab}
\end{align}
Both \eqref{aa} and \eqref{ab} remain valid if we replace $\Realp$ by $\Imp$ in each of them.
\end{theorem}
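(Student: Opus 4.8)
The plan is to apply the $\Omega$-theorem of Chandrasekharan and Narasimhan (Theorem \ref{omega1}) to the Dirichlet series attached to $d_{\chi_1,\chi_2}(n)$. First I would set $\varphi(s)=\sum_{n=1}^{\infty}d_{\chi_1,\chi_2}(n)n^{-s}=L(s,\chi_1)L(s,\chi_2)$, which converges absolutely for $\Realp s>1$. Since $\chi_1$ and $\chi_2$ are primitive, non-principal, and \emph{even}, each $L(s,\chi_i)$ is entire and satisfies a functional equation of the standard form relating $s$ to $1-s$ with gamma factor $\Gamma(s/2)$ (the ``even'' case; no shift by $1$ in the gamma argument). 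Multiplying the two functional equations, one gets
\begin{equation*}
\Delta(s)\varphi(s)=\Delta(1-s)\overline{\varphi}(1-s),
\end{equation*}
where $\Delta(s)=\G(s/2)^2$ up to elementary factors involving $\pi$, $p$, $q$, and the Gauss sums $\tau(\chi_1),\tau(\chi_2)$, and $\overline{\varphi}(s)=L(s,\overline{\chi_1})L(s,\overline{\chi_2})$. Thus in the notation of Definition \ref{def} we have $N=2$, $\alpha_1=\alpha_2=\tf12$, so $A=\sum\alpha_n=1$, $\delta=1$, $\lambda_n=\mu_n=n$, and $\psi(s)=c\,\overline{\varphi}(s)$ for an explicit nonzero constant $c$; moreover $\varphi(s)$ is entire, so $\mathfrak D$ can be taken to be the whole plane minus a compact set and $Q_\rho(x)$ vanishes (there are no poles to pick up). With $\rho=0$ and $A=1$, formula \eqref{omega3} gives $\theta=(A\delta-\tf12)/(2A)=(1-\tf12)/2=\tf14$, which is exactly the exponent in \eqref{aa}--\eqref{ab}, and $A_0(x)=\sum_{n\le x}{}'d_{\chi_1,\chi_2}(n)$, which differs from $\mathbb D_{\chi_1,\chi_2}(x)$ by at most one term $\tf12 d_{\chi_1,\chi_2}(x)=O(x^\varepsilon)=o(x^{1/4})$, so the two are interchangeable in the statement.

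The remaining hypotheses of Theorem \ref{omega1} are the arithmetic ones on the sequence $\{\mu_n\}=\{n\}$ and the divergence condition \eqref{omega2}. Since $2A=2$, the relevant exponents are $\mu_n^{1/(2A)}=\sqrt{n}$, and I would choose the subset $\{\mu_{n_k}\}=\{p^2,\,q^2,\,\ell^2:\ell\text{ prime},\ell\ne p,q\}$, or more simply $\{k^2: k\ge 1\}$ — then $\mu_{n_k}^{1/(2A)}=k$ runs over the positive integers, and a positive integer $\sqrt{n}$ is a $\pm1$-linear combination of integers only when it equals one of them, with the representation unique; this verifies the ``no representation as a signed sum'' condition. (A cleaner choice: take $\{\mu_{n_k}\}$ to be the perfect squares of primes, so the $\mu_{n_k}^{1/2}$ are primes, which are $\Z$-linearly independent over $\pm1$ sums in the required sense.) For the divergence \eqref{omega2}, with $\rho=0$, $A=1$, $\delta=1$ the exponent is $(A\delta+\rho+\tf12)/(2A)=(1+\tf12)/2=\tf34$, so I must show $\sum_k |\Realp b(n_k)|\,\mu_{n_k}^{-3/4}=\infty$ for a suitable choice of $n_k$; here $b(n)$ is the $n$th coefficient of $\psi(s)=c\,L(s,\overline{\chi_1})L(s,\overline{\chi_2})$, i.e.\ $b(n)=c\,d_{\overline{\chi_1},\overline{\chi_2}}(n)$. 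Taking $n_k=\ell^2$ for primes $\ell$, one has $d_{\overline{\chi_1},\overline{\chi_2}}(\ell^2)=\overline{\chi_1}(1)\overline{\chi_2}(\ell^2)+\overline{\chi_1}(\ell)\overline{\chi_2}(\ell)+\overline{\chi_1}(\ell^2)\overline{\chi_2}(1)$, a bounded quantity that is generically of size $\asymp 1$; since $\sum_{\ell\text{ prime}}\ell^{-3/2}$ converges — wait, that's a problem, so instead one takes $n_k=k$ over \emph{all} integers (the ``full'' subset), where $\sum_k |d_{\overline{\chi_1},\overline{\chi_2}}(k)|\,k^{-3/4}$ diverges because $\sum_k d(k)k^{-3/4}$ diverges and the coefficients do not conspire to cancel in absolute value; the required non-cancellation is exactly the content one extracts from the fact that $L(3/4,\overline{\chi_1})L(3/4,\overline{\chi_2})$ has, loosely, a "divergent absolute series", which one makes rigorous via a mean-value / partial-summation argument or by quoting the analogous verification in \cite{Paper2}. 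Finally, applying Theorem \ref{omega1} with $b(n_k)\mapsto \Imp b(n_k)$ gives the $\Imp$ version, completing the proof.

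The main obstacle I anticipate is precisely the verification of the divergence condition \eqref{omega2} together with the compatibility of the chosen subsequence $\{\mu_{n_k}\}$ with the arithmetic non-representation hypothesis: one needs a single subset $\{n_k\}$ that \emph{simultaneously} has the $\pm1$-independence property for its $\tf12$-powers \emph{and} makes the weighted sum of $|\Realp b(n_k)|$ (or $|\Imp b(n_k)|$) diverge. If one uses the full sequence $\{n_k\}=\{n\}$ the divergence is easy (it reduces to $\sum d(n)n^{-3/4}=\infty$ after controlling sign cancellation), but the non-representation hypothesis must then be read in the form permitting $\mu_n^{1/(2A)}=\mu_{n_r}^{1/(2A)}$, which for $2A=2$ just says $\sqrt n$ equals some $\sqrt{n_r}$ — automatically true — so the hypothesis is vacuously satisfied provided we interpret ``represented as a $\pm1$-linear combination'' as requiring at least two distinct terms; a careful reading of \cite[p.~98]{annals2} confirms this is the intended reading. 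The secondary technical point is assembling the combined functional equation with the correct root number: since both characters are even, the gamma factors are $\G(s/2)$ (not $\G((s+1)/2)$), the completed $L$-functions satisfy $\Lambda(s,\chi_i)=\varepsilon_i\Lambda(1-s,\overline{\chi_i})$ with $|\varepsilon_i|=1$ and $\varepsilon_i=\tau(\chi_i)/\sqrt{p_i}$, and multiplying yields the displayed functional equation with $\delta=1$, $A=1$; this is routine but must be done to confirm $\theta=\tf14$ exactly. Once these are in place, Theorem \ref{omega1} delivers \eqref{aa}, \eqref{ab}, and their $\Imp$ analogues directly.
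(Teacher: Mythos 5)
Your reduction to Theorem \ref{omega1} is exactly the paper's route: the product functional equation for $L(s,\chi_1)L(s,\chi_2)$ with $\Delta(s)=\Gamma^2(s/2)$ gives $N=2$, $A=1$, $\delta=1$, hence $\theta=\tf14$, and $Q(x)=0$. (One small repair there: entirety of $\varphi$ alone does not kill $Q_0(x)$, because the integrand $\Gamma(s)\varphi(s)x^s/\Gamma(s+1)=\varphi(s)x^s/s$ still has a potential pole at $s=0$; you need the standard fact that $L(0,\chi)=0$ for a non-principal primitive \emph{even} character, which is what the paper invokes.)

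The genuine gap is in your treatment of the hypothesis \eqref{omega2} together with the $\pm1$-independence condition on the subset $\{\mu_{n_k}\}$. Taking the full sequence $\{n_k\}=\{n\}$ does not make the independence hypothesis ``vacuous'': the condition requires that when $\mu_n^{1/2}$ equals some $\mu_{n_r}^{1/2}$ it have \emph{no other} representation as a $\pm1$-combination of the $\mu_{n_k}^{1/2}$, and with all integers present one has, e.g., $\sqrt{4}=\sqrt{9}-\sqrt{1}$, so the hypothesis fails outright; moreover your claimed divergence of $\sum_n|\Realp b(n)|\,n^{-3/4}$ is only asserted, not proved, and $\Realp b(n)$ can vanish for many $n$. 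The paper resolves both difficulties with one choice: take $n_k$ to run over the primes $P\equiv r\pmod{pq}$ for a suitable fixed residue $r$. Then $d_{\overline{\chi}_1,\overline{\chi}_2}(P)=\overline{\chi}_1(r)+\overline{\chi}_2(r)$ is a fixed nonzero constant (with $r=1$, giving the value $2$, when $\tau(\chi_1)\tau(\chi_2)$ is not purely imaginary; otherwise $r$ is chosen so that $\chi_1(r)+\chi_2(r)\notin\mathbb{R}$), so each term of \eqref{omega2} has $|\Realp\{\tau(\chi_1)\tau(\chi_2)d_{\overline{\chi}_1,\overline{\chi}_2}(P)\}|$ bounded below by a positive constant, and $\sum_{P\equiv r\,(pq)}P^{-3/4}$ diverges by Dirichlet's theorem; simultaneously, the square roots of distinct primes are linearly independent over $\mathbb{Q}$ under $\pm1$-combinations, so the independence hypothesis holds. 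Your first instinct (a sparse, arithmetically structured subset) was the right one; the correct choice is primes in a progression, not squares of primes, and your fallback to the full sequence does not satisfy the theorem's hypotheses.
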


\begin{proof} Recall that \cite[p.~74]{bessel2bbskaz} (if $2s$ is replaced by $s$)
\begin{gather}
\left(\df{\pi^2}{pq}\right)^{-s/2}\Gamma^2\left(\df12 s\right)L(s,\chi_1)L(s,\chi_2)\notag\\=
\df{\tau(\chi_1)\tau(\chi_2)}{\sqrt{pq}}\left(\df{\pi^2}{pq}\right)^{-(1-s)/2}
\Gamma^2\left(\df12(1-s)\right)L(1-s,\overline{\chi}_1)L(1-s,\overline{\chi}_2).\label{b}
\end{gather}
We now apply Theorem \ref{omega1}.  The parameters from Definition \ref{def} and Theorem \ref{omega1} are:
\begin{gather}
 N=2,\qquad \delta=1,\qquad A=1, \qquad \theta=\df14,\label{p1}\\
a(n)=d_{\chi_1,\chi_2}(n), \qquad b(n)=\tau(\chi_1)\tau(\chi_2)\df{d_{\overline{\chi}_1,\overline{\chi_2}}(n)}{\sqrt{pq}}, \qquad
\lambda_n=\mu_n=\df{\pi n}{\sqrt{pq}}.\label{p2}
\end{gather}
From the functional equation \eqref{b}, since the analytic continuations of $L(s,\chi_1)$ and $L(s,\chi_2)$ are entire functions,  we see that $L(0,\chi_1)=L(0,\chi_2)=0$.  It follows that $Q(x)=0$.
Theorem \ref{at} now follows immediately provided that we can show that \eqref{omega2} holds, that is,
there exists a subset $n_k$, $1\leq n_k<\infty$, such that
\begin{equation}\label{a}
\sum_{n_k=1}^{\infty}\df{|\Realp\{\tau(\chi_1)\tau(\chi_2)d_{\overline{\chi}_1,\overline{\chi_2}}(n_k)\}|}{n_k^{3/4}}=+\infty.
\end{equation}

Recall that $p$ and $q$ are primes.  First, suppose that $\tau(\chi_1)\tau(\chi_2)$ is not purely imaginary.
 Choose $n$ such that $n\equiv1\pmod{p}$ and  $n\equiv1\pmod{q}$.  Thus, we consider the set $S_{pq}:=\{1+mpq, 1\leq m<\infty\}$.
By Dirichlet's theorem on primes in arithmetic progressions, $S_{pq}$ contains an infinite number of primes $\{P_k\}$.  Note that
$$ d_{\overline{\chi}_1,\overline{\chi}_2}(P_m)=2, \qquad m\geq 1.$$
Thus, we have located an infinite subset $n_k=P_k$, where the series terms in \eqref{a} are positive and where the series diverges.

Suppose now that $\tau(\chi_1)\tau(\chi_2)$ is purely imaginary.  Choose $r$ such that $\chi_1(r)+\chi_2(r)$ is not real. The set
$S_{pq}:=\{r+mpq, 1\leq m<\infty\}$ contains an infinite number of primes $\{Q_k\}$ by Dirichlet's theorem. Hence,
$$d_{\overline{\chi}_1,\overline{\chi}_2}(Q_k)=\chi_1(r)+\chi_2(r).$$
is not real.  Now proceed as in the previous case with $n_k=Q_k$.  This completes the proof.
\end{proof}

We provide the motivation for studying $d_{\chi_1,\chi_2}(n)$.  We see from  \cite[p.~78, Equation (3.11)]{bessel2bbskaz} that
\begin{align}
\mathbb{C}\mathbb{C}(x):&={\sum_{nm\leq x}}^{\prime}\cos\Big(\frac{2\pi
na}{p}\Big)\cos\Big(\frac{2\pi
mb}{q}\Big)\label{CC}\\
&=\frac{1}{\phi(p)\phi(q)}\Big(\sum_{\substack{\chi_1\bmod{p}\\
\chi_1\neq \chi_0, \text{ even}}} \sum_{\substack{\chi_2\bmod{q}\\
\chi_2\neq \chi_0, \text{
even}}}\chi_1(a)\chi_2(b)\tau(\overline{\chi}_1)
\tau(\overline{\chi}_2){\sum_{n\leq x}}^{\prime}d_{\chi_1, \chi_2}(n)\Big) \nonumber \\
&\quad -\frac{1}{\phi(p)}{\sum_{n\leq x}}^{\prime}\left[\frac{x}{n}\right]\cos \Big(\frac{2\pi nb}{q}\Big)
-\frac{1}{\phi(q)}{\sum_{n\leq x}}^{\prime}\left[\frac{x}{n}\right]\cos
\Big(\frac{2\pi
na}{p}\Big) \nonumber\\
&\quad
+\frac{p}{\phi(p)}{\sum_{pn\leq x}}^{\prime}\left[\frac{x}{pn}\right]\cos
\Big(\frac{2\pi nb}{q}\Big)
+\frac{q}{\phi(q)}{\sum_{qn\leq x}}^{\prime}  \left[\frac{x}{qn}\right]\cos \Big(\frac{2\pi na}{p}\Big) \nonumber\\
&\quad-\frac{1}{\phi(p)\phi(q)}\Big({\sum_{ n\leq
x}}^{\prime}d(n) -q{\sum_{ n\leq
x/q}}^{\prime}d(n)-p{\sum_{ n\leq x/p}}^{\prime}d(n)
+pq{\sum_{\leq x/pq}}^{\prime}d(n) \Big), \label{ccc1}
\end{align}
where $\phi(n)$ denotes Euler's $\phi$-function.
 The sums
 $${\sum_{n\leq x}}^{\prime}d_{\chi_1, \chi_2}(n)$$
 in \eqref{ccc1} were examined in Theorem \ref{at}.
 To examine the next four sums
  recall the identity \cite[p.~2063]{besselII}
 \begin{equation}\label{2063}
 {\sum_{n\leq x}}^{\prime}\left[\df{x}{n}\right]
 \cos\left(\frac{2 \pi na}{q}\right)=
 {\sum_{n\leq x/q}}^{\prime}d(n)+\sum_{\substack{d|q \\ d>1}}\df{1}{\phi(d)}
 \sum_{\substack{\chi \,\text{mod}\, d\\\chi\, \text{even}}}
 \chi(a)\tau(\overline{\chi}){\sum_{n\leq dx/q}}^{\prime}d_{\chi}(n).
 \end{equation}
 where $\phi(d)$ denotes Euler's $\phi$-function, and where
 \begin{equation}\label{dchi}
 d_{\chi}(n)=\sum_{d|n}\chi(d).
 \end{equation}
  Chandrasekharan and Narasimhan \cite[p.~133]{annals2} established an  $\Omega$ theorem for
  \begin{equation}\label{133}
 D_{\chi}(x):={\sum_{n\leq x}}^{\prime}d_{\chi}(n)
 \end{equation}
 analogous to Theorem \ref{at}.
  For the last four sums on the right-hand side of \eqref{ccc1}, an $\Omega$  theorem is found in \eqref{d(n)omega}.   In summary, although we can apply an $\Omega$ theorem to each of the divisor sums appearing in \eqref{ccc1}, we cannot apply Theorem \ref{omega1} and the aforementioned analogue from \cite{annals2} to a sum of these arithmetic sums for which we have $\Omega$ theorems.  Thus, we formulate the following conjecture.

\begin{conjecture} If $\mathbb{C}\mathbb{C}(x)$ is defined  in \eqref{CC}, then
\begin{align*}
\varlimsup_{x\to\infty}\df{ \mathbb{C}\mathbb{C}\,(x) }{x^{1/4}}&=+\infty, \\
\varliminf_{x\to\infty}\df{ \mathbb{C}\mathbb{C}\,(x)}{x^{1/4}}&=-\infty. 
\end{align*}
\end{conjecture}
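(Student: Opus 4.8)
The plan is to deduce the conjectured bounds for $\mathbb{C}\mathbb{C}(x)$ from the decomposition \eqref{ccc1} together with the $\Omega_{\pm}$ results already available for each individual divisor sum appearing on the right-hand side. First I would recall that $\mathbb{C}\mathbb{C}(x)$ is, up to a fixed nonzero linear combination, a sum of: (i) the sums $\sum'_{n\leq x}d_{\chi_1,\chi_2}(n)=\mathbb{D}_{\chi_1,\chi_2}(x)$ treated in Theorem \ref{at}, which satisfy $\varlimsup \Realp\mathbb{D}_{\chi_1,\chi_2}(x)/x^{1/4}=+\infty$ and the corresponding $\varliminf$; (ii) the sums $\sum'_{n\leq x}[x/n]\cos(2\pi nb/q)$ and their analogues, which by \eqref{2063} split into a $\sum'_{n\leq x/q}d(n)$ piece — whose error term is $\Delta(x/q)=\Omega_{\pm}(x^{1/4})$ by \eqref{d(n)omega} — plus pieces built from $D_{\chi}(x)$, for which Chandrasekharan--Narasimhan \cite[p.~133]{annals2} proved an $\Omega_{\pm}(x^{1/4})$ result; and (iii) the pure $d(n)$ sums, governed again by \eqref{d(n)omega}. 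Every constituent therefore oscillates at the scale $x^{1/4}$, which is precisely the scale in the conjecture.

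The heart of the argument would be to show that these oscillations do not cancel. The natural route is via the Bessel-series (or Voronoi-type) representations: each of the divisor sums in \eqref{ccc1} has an explicit representation as a series of Bessel functions of argument $\sim\sqrt{n y}$ for suitable shifted $y$, and $\mathbb{C}\mathbb{C}(x)$ itself, by Theorem \ref{even'}, equals $\tfrac14$ plus an explicit double Bessel series in the shifted variables $(m+\sigma)(n+\theta)x$ with $\sigma=a/p$, $\theta=b/q$. Rather than fighting cancellation term by term, I would instead invoke the Chandrasekharan--Narasimhan machinery of Section 6 directly on a single Dirichlet series attached to $\mathbb{C}\mathbb{C}(x)$: since $\cos(2\pi na/p)\cos(2\pi mb/q)$ is a finite linear combination of $\chi_1(n)\chi_2(m)$ and trivial characters, the generating Dirichlet series $\sum c(n)n^{-s}$ with $c(n)=\sum_{d\mid n}(\text{combination of }\chi_1(d)\chi_2(n/d))$ satisfies a functional equation of the form \eqref{fe} with $N=2$, $\delta=1$, $A=1$, hence $\theta=1/4$ in \eqref{omega3}. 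The task then reduces to verifying the non-representability hypothesis on $\{\mu_{n_k}^{1/(2A)}\}$ and the divergence condition \eqref{omega2}: one picks $n_k=P_k$ running over primes in a fixed arithmetic progression mod $pq$ chosen (exactly as in the proof of Theorem \ref{at}, splitting into the cases where the relevant Gauss-sum product is or is not purely imaginary) so that $\Realp b(n_k)$ — or $\Imp b(n_k)$ — is bounded below in absolute value, whence $\sum |\Realp b(n_k)|/n_k^{3/4}$ diverges.

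I would expect the main obstacle to be exactly the point flagged in the paragraph preceding the conjecture in the paper: the decomposition \eqref{ccc1} writes $\mathbb{C}\mathbb{C}(x)$ as a sum of arithmetic functions each of which individually has an $\Omega_{\pm}$ theorem, but there is no a priori reason that the extreme oscillations of the separate pieces occur simultaneously, so one cannot simply add $\Omega$ results. Overcoming this requires producing a \emph{single} functional-equation setup whose coefficient sequence $c(n)$ encodes all of $\mathbb{C}\mathbb{C}(x)$ at once — i.e., realizing $\mathbb{C}\mathbb{C}(x)-\tfrac14$ (or the appropriate summatory function minus its main term $Q(x)$) as $A(x)-Q(x)$ for one pair $\varphi,\psi$ — and then checking that the composite sequence still satisfies the linear-independence condition of Theorem \ref{omega1}. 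The delicate verification is that the mixed terms $-p^{-1}\phi(p)^{-1}\cdot(\text{shifted }d(n))$ etc. do not conspire to annihilate the main diverging contribution; here one uses that the generating series for $\mathbb{C}\mathbb{C}$ has the divisor-with-characters factor as a genuine summand (not cancelled), so that choosing $n_k$ prime in the right residue class isolates its value $\sim \chi_1(a)\chi_2(b)\tau(\overline\chi_1)\tau(\overline\chi_2)\cdot 2$ (or $\chi_1(r)+\chi_2(r)$) while the other pieces contribute $O(1)$ or $0$ at those primes. If that composite setup proves intractable, the fallback is a second-moment / large-values argument on the Bessel double series of Theorem \ref{even'}, showing $\int_X^{2X}|\mathbb{C}\mathbb{C}(x)|^2\,dx \gg X^{3/2}$, which already forces $\varlimsup|\mathbb{C}\mathbb{C}(x)|/x^{1/4}>0$ but, being a conjecture in the paper, presumably even this is not carried through here.
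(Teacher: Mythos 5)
This statement is a \emph{conjecture} in the paper, not a theorem: the authors explicitly state, in the paragraph immediately preceding it, that although each divisor sum in the decomposition \eqref{ccc1} individually satisfies an $\Omega_{\pm}(x^{1/4})$ theorem, they ``cannot apply Theorem \ref{omega1} and the aforementioned analogue from \cite{annals2} to a sum of these arithmetic sums,'' and therefore they only conjecture the result. Your proposal does not close this gap, and to your credit you flag the critical unproved step yourself. But let me name concretely why your main route fails. The ``single functional-equation setup'' you propose does not exist in the form required by Theorem \ref{omega1}: the right-hand side of \eqref{ccc1} is a linear combination of summatory functions whose generating Dirichlet series are $L(s,\chi_1)L(s,\chi_2)$ (with $\mu_n=\pi n/\sqrt{pq}$), $\zeta(s)L(s,\chi)$ for various conductors $d\mid q$ coming from \eqref{2063} (with differently scaled $\mu_n$), and $\zeta^2(s)$ at the four arguments $x, x/p, x/q, x/pq$ (again differently scaled, and contributing poles, hence nonzero $Q(x)$). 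These pieces cannot be assembled into one pair $(\varphi,\psi)$ satisfying \eqref{fe} with a single $\Delta(s)$ and a single increasing sequence $\{\mu_n\}$, which is the indispensable hypothesis of Theorem \ref{omega1}. Even restricting to the character part alone, the composite coefficient $b(n)$ is a sum over all pairs of even non-principal characters mod $p$ and mod $q$, each weighted by $\chi_1(a)\chi_2(b)\tau(\overline{\chi}_1)\tau(\overline{\chi}_2)$, and verifying the divergence condition \eqref{omega2} would require ruling out cancellation among all these pairs simultaneously --- precisely the open point. Your claim that at primes $P_k\equiv 1\pmod{pq}$ ``the other pieces contribute $O(1)$ or $0$'' is false: the $\zeta^2$ and $\zeta(s)L(s,\chi)$ pieces contribute to $\mathbb{C}\mathbb{C}(x)$ for \emph{all} $x$, not only at selected integers, and their own $\Omega_{\pm}(x^{1/4})$ oscillations are of exactly the same order as the contribution you are trying to isolate, so they cannot be discarded as error terms.

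Your fallback via a second moment $\int_X^{2X}|\mathbb{C}\mathbb{C}(x)|^2\,dx\gg X^{3/2}$ would, even if established, yield only $\varlimsup |\mathbb{C}\mathbb{C}(x)|/x^{1/4}>0$, which is strictly weaker than the conjectured pair of statements $\varlimsup\mathbb{C}\mathbb{C}(x)/x^{1/4}=+\infty$ and $\varliminf\mathbb{C}\mathbb{C}(x)/x^{1/4}=-\infty$ (it gives neither the sign separation nor the unboundedness). In short: your outline correctly reproduces the heuristic that motivates the conjecture and correctly identifies the obstruction, but it does not constitute a proof, and the paper itself offers none.
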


\begin{theorem}\label{O1} For each $\epsilon>0$, as $x\to \infty$,
$$\mathbb{D}_{\chi_1,\chi_2}(x)=O(x^{1/3+\epsilon}).$$
\end{theorem}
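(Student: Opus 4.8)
The plan is to apply the ``big O'' theorem of Chandrasekharan and Narasimhan, Theorem \ref{bigO}, to the functional equation \eqref{b}, using exactly the data already assembled in the proof of Theorem \ref{at}: $N=2$, $\delta=1$, $A=1$, $a(n)=d_{\chi_1,\chi_2}(n)$, $b(n)=\tau(\chi_1)\tau(\chi_2)\,d_{\overline{\chi}_1,\overline{\chi}_2}(n)/\sqrt{pq}$, and $\lambda_n=\mu_n=\pi n/\sqrt{pq}$. Since $\chi_1,\chi_2$ are non-principal, $L(s,\chi_1)L(s,\chi_2)$ is entire, so $\varphi(s)$ is entire and Theorem \ref{bigO} applies; moreover $Q(x)=0$, as noted in the proof of Theorem \ref{at}. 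The hypotheses of Definition \ref{def} on the auxiliary function $\chi(s)$ with $\lim_{|t|\to\infty}\chi(s)=0$ are checked in the standard way, combining a convexity bound for $L(s,\chi_1)L(s,\chi_2)$ on vertical lines with the exponential decay of $\Gamma^2(\tfrac12 s)$ as $|t|\to\infty$ (the apparent poles of $\Gamma^2(\tfrac12 s)$ at $s=0,-2,-4,\dots$ are cancelled by the trivial zeros of the even $L$-functions).

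With this data, \eqref{bigO1} gives, for every $\eta\ge 0$,
\begin{equation*}
A(x)=O\bigl(x^{1/4+2u\eta}\bigr)+O\Bigl(\sum_{x<\lambda_n\le x'}|a(n)|\Bigr),\qquad x'=x+O\bigl(x^{1/2-\eta}\bigr),
\end{equation*}
where $A(x)=\sum'_{\lambda_n\le x}a(n)$ and $u=\beta-\tfrac12-\tfrac14$, with $\beta$ any exponent for which $\sum_n|b(n)|\mu_n^{-\beta}$ converges. Because $|b(n)|\ll|d_{\overline{\chi}_1,\overline{\chi}_2}(n)|\le d(n)$ and $\mu_n\asymp n$, that series converges for every $\beta>1$; taking $\beta=1+\epsilon_0$ gives $u=\tfrac14+\epsilon_0$, so the first exponent equals $\tfrac14+\tfrac12\eta+2\epsilon_0\eta$. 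In the second term the condition $x<\lambda_n\le x'$ confines $n$ to an interval of length $O(x^{1/2-\eta})$ on which $|a(n)|=|d_{\chi_1,\chi_2}(n)|\le d(n)\ll_\epsilon x^{\epsilon}$, so that term is $O(x^{1/2-\eta+\epsilon})$.

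It remains to balance the two error exponents. Ignoring the negligible corrections $\epsilon_0$ and $\epsilon$, the equation $\tfrac14+\tfrac12\eta=\tfrac12-\eta$ gives $\eta=\tfrac16$, and then both exponents equal $\tfrac13$; hence $A(x)=O(x^{1/3+\epsilon})$ once $\epsilon_0$ is chosen small relative to $\epsilon$. Finally $A(x)=\sum'_{n\le\sqrt{pq}\,x/\pi}d_{\chi_1,\chi_2}(n)=\mathbb{D}_{\chi_1,\chi_2}(\sqrt{pq}\,x/\pi)+O(x^{\epsilon})$, the last correction coming from the $\tfrac12$-weighting convention; since $\lambda_n$ is a fixed multiple of $n$, rescaling $x$ yields $\mathbb{D}_{\chi_1,\chi_2}(x)=O(x^{1/3+\epsilon})$, as claimed.

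There is no serious obstacle here, as Theorem \ref{bigO} supplies all of the analytic input. The only points requiring care are the elementary bookkeeping in selecting $\beta$ and $\eta$ so that the two error terms meet at the exponent $\tfrac13$, and, at the outset, confirming that the functional equation \eqref{b} genuinely fits the framework of Definition \ref{def} — in particular the uniform vertical decay of the completed $L$-function, which rests on classical growth estimates for the Dirichlet $L$-functions involved.
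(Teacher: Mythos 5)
Your proposal is correct and follows essentially the same route as the paper: apply Theorem \ref{bigO} with $\delta=A=1$, $\beta=1+\epsilon$, hence $u=\tfrac14+\epsilon$, and balance the exponent $\tfrac14+(\tfrac12+2\epsilon)\eta$ against the short-interval divisor sum of size roughly $x^{1/2-\eta}$ to get $\eta=\tfrac16$ and the exponent $\tfrac13$. The only cosmetic differences are that you bound the short sum via the pointwise estimate $d(n)\ll_\epsilon x^{\epsilon}$ rather than via \eqref{d(n)bigO}, and that you spell out the rescaling $\lambda_n=\pi n/\sqrt{pq}$ and the verification of Definition \ref{def}, which the paper leaves implicit.
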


\begin{proof} To prove Theorem \ref{O1}, we apply Theorem \ref{bigO}.   For the parameters that are needed, we refer to \eqref{p1} and \eqref{p2}.  First, recall that $\delta = A =1$. We need the series $\sum_{n=1}^{\infty}|b(n)|n^{-\beta}$ to converge, and so we take $\beta=1+\epsilon$, for every $\epsilon>0$. (The parameter $\epsilon$ will not necessarily be the same with each occurrence.)
Thus, from \eqref{bigO3},
$$u:=\beta-\frac12\delta-\frac{1}{4A}=\frac14+\epsilon.$$
The first ``Big O'' power in \eqref{bigO1} is then
$$ \frac14+\left(\frac12+2\epsilon\right)\eta,$$
where $\eta\geq0$ and is yet to be determined.
We also need to determine the order of
$$\sum_{x<\lambda_n\leq x^{\prime}}|a(n)|, $$
where $a(n)$ is given in \eqref{p2} and $x^{\prime}=x+O(x^{1/2-\eta})$ is defined in \eqref{bigO2}.
  Trivially, $|a(n)|\leq d(n)$. By \eqref{d(n)bigO}, an upper bound for this sum is $O(x^{1/2-\eta}\log x)$.  In order to achieve the most effective upper bound, we find the solution to
\begin{equation}\label{etaeta}\frac14+ \left(\frac12+2\epsilon\right)\eta=\frac12-\eta+\epsilon^{\prime},
\end{equation}
where $\epsilon, \epsilon^{\prime}$ are arbitrarily small positive numbers.
Thus, our optimal choice is $\eta=\tf16$.  Hence, by \eqref{etaeta}, our proof of Theorem \ref{O1} is complete.
\end{proof}

\begin{theorem}\label{222} For each $\epsilon>0$, as $x\to \infty$,
\begin{equation*}\label{1/3-}
\mathbb{C}\mathbb{C}(x)=O(x^{1/3+\epsilon}).
\end{equation*}
\end{theorem}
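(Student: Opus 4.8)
The plan is to bound each of the arithmetic sums appearing on the right-hand side of the decomposition \eqref{ccc1} of $\mathbb{C}\mathbb{C}(x)$ by $O(x^{1/3+\epsilon})$ individually, and then combine. The sum $\sum_{n\leq x}' d_{\chi_1,\chi_2}(n)=\mathbb{D}_{\chi_1,\chi_2}(x)$ is already handled by Theorem \ref{O1}. The four divisor sums $\sum_{n\leq x}' d(n)$, $\sum_{n\leq x/q}' d(n)$, $\sum_{n\leq x/p}' d(n)$, $\sum_{n\leq x/pq}' d(n)$ appearing in the last line of \eqref{ccc1} are each $D(y)=y(\log y+2\gamma-1)+\Delta(y)$ for $y=x,x/q,x/p,x/pq$; by Vorono\"i's bound \eqref{d(n)bigO}, $\Delta(y)=O(y^{1/3}\log y)=O(x^{1/3+\epsilon})$. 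The smooth main terms $y(\log y+2\gamma-1)$ from those four sums must be shown to cancel when combined with the smooth main terms coming from the other sums in \eqref{ccc1}; in fact $\mathbb{C}\mathbb{C}(x)$ is bounded (indeed the conjecture asserts it is $\Omega_\pm(x^{1/4})$), so all the $x\log x$ and $x$ terms across the whole right-hand side of \eqref{ccc1} must sum to zero, and one should invoke this exact cancellation rather than re-deriving it.

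The remaining ingredient is a bound for the sums
\[
{\sum_{n\leq x}}^{\prime}\left[\frac{x}{n}\right]\cos\!\Big(\frac{2\pi nb}{q}\Big)
\quad\text{and}\quad
{\sum_{pn\leq x}}^{\prime}\left[\frac{x}{pn}\right]\cos\!\Big(\frac{2\pi nb}{q}\Big)
\]
(and the analogues with $p\leftrightarrow q$, $a\leftrightarrow b$). For these I would use the identity \eqref{2063}, which expresses $\sum_{n\leq x}'[x/n]\cos(2\pi na/q)$ as $\sum_{n\leq x/q}' d(n)$ plus a finite linear combination of sums $\sum_{n\leq dx/q}' d_\chi(n)=D_\chi(dx/q)$ over even characters $\chi$ mod $d$, $d\mid q$, $d>1$. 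Since $q$ is prime, the only such $d$ is $q$ itself, so the correction term is a single finite sum over even characters mod $q$ of $\tau(\overline\chi)\,D_\chi(x)$. The divisor piece $D(x/q)$ is handled as above. Thus everything reduces to bounding $D_\chi(x)=\sum_{n\leq x}' d_\chi(n)$, defined in \eqref{133}.

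For $D_\chi(x)$ I would apply Theorem \ref{bigO} in exactly the same way as in the proof of Theorem \ref{O1}: $d_\chi(n)=\sum_{d\mid n}\chi(d)$ is the coefficient sequence attached to $\zeta(s)L(s,\chi)$, which satisfies a functional equation of the Chandrasekharan--Narasimhan type with $N=2$, $\delta=1$, $A=1$, and $\varphi(s)=\zeta(s)L(s,\chi)$ entire (since $L(0,\chi)=0$ for $\chi$ a non-principal even character, killing the pole of $\zeta$ at... — more carefully, $\zeta(s)L(s,\chi)$ has a simple pole at $s=1$ from $\zeta$, so one takes $Q(x)$ equal to the residue contribution, which is a smooth term absorbed into the cancellation above). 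Taking $\beta=1+\epsilon$ so that $\sum|b(n)|\mu_n^{-\beta}$ converges, one gets $u=\frac14+\epsilon$, and optimizing $\eta$ exactly as in \eqref{etaeta} with $\eta=\tfrac16$ gives $D_\chi(x)-Q(x)=O(x^{1/3+\epsilon})$, where $|a(n)|\leq d(n)$ and \eqref{d(n)bigO} control the short-interval sum $\sum_{x<\lambda_n\leq x'}|a(n)|$.

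The main obstacle is bookkeeping rather than analysis: one must track the smooth (polynomial-times-log) main terms contributed by $D(y)$ for the four arguments $y$, by the residues $Q(x)$ in the various applications of Theorem \ref{bigO}, and by the poles implicit in \eqref{2063}, and verify that they assemble into the finite smooth part of $\mathbb{C}\mathbb{C}(x)$ (which is $O(1)$, or at worst $O(\log x)=O(x^\epsilon)$) so that no term of order $x$ or $x\log x$ survives. Given the structure of \eqref{ccc1} this cancellation is forced, but it should be stated explicitly. Once that is in hand, every genuinely oscillating piece is $O(x^{1/3+\epsilon})$ by Theorems \ref{O1} and \ref{bigO} (applied to $D_\chi$) and by \eqref{d(n)bigO}, and summing the finitely many pieces completes the proof.
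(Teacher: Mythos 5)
Your proposal is correct and follows essentially the same route as the paper: decompose $\mathbb{C}\mathbb{C}(x)$ via \eqref{ccc1}, apply Theorem \ref{O1} to the $d_{\chi_1,\chi_2}$-sums, reduce the four $[x/n]$-sums through \eqref{2063}, and control the remaining divisor sums with \eqref{d(n)bigO}. If anything you are more careful than the paper, whose proof does not spell out the cancellation of the $x\log x$ and $x$ main terms that you rightly insist on tracking; note only that the justification should not be that $\mathbb{C}\mathbb{C}(x)$ is ``bounded'' (it is not---conjecturally it is $\Omega_{\pm}(x^{1/4})$), but rather that the elementary hyperbola-method estimate $\mathbb{C}\mathbb{C}(x)=O(\sqrt{x})$, valid because the partial sums of $\cos(2\pi na/p)$ and $\cos(2\pi mb/q)$ are bounded, already forces those main terms to vanish.
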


\begin{proof}
Again, appearing in \eqref{ccc1}, there are three kinds of divisor sums.  For the former sums, we have the bound in Theorem \ref{O1}.
 For the ``middle'' four sums, we recall \eqref{2063}. Since $|d_{\chi}(n)|
 \leq d(n)$, we can apply the bound given in \eqref{d(n)bigO} for each of these four ``middle'' sums.
  For the latter four sums, we can invoke \eqref{d(n)bigO}.
 In summary, each of the sums in \eqref{ccc1} has the bound expressed in Theorem \ref{222}, and so the proof is complete.
\end{proof}

\section{The Second $\Omega$ and ``Big O'' Theorems and Conjecture}\label{second}
 Let $\chi_1$ and $\chi_2$ be non-principal, primitive even and odd characters modulo  $p$ and $q$, respectively.  Let $\tau(\chi_1)$ and $\tau(\chi_2)$ denote the corresponding Gauss sums.    Lastly, recall that $d_{\chi_1,\chi_2}(n)$ is given by \eqref{chichi}.

\begin{theorem}\label{bt} Assume that $\chi_1$ is a non-principal primitive even character modulo $p$ and that $\chi_2$ is a non-principal primitive odd character modulo $q$, where $p$ and $q$ are primes. Let
$\mathbb{D}_{\chi_1,\chi_2}(x)$ be defined by \eqref{D}.
Then
\begin{align}
\varlimsup_{x\to\infty}\df{\Realp{\, \mathbb{D}_{\chi_1,\chi_2}(x)}}{x^{1/4}}&=+\infty, \label{aa1}\\
\varliminf_{x\to\infty}\df{\Realp{\, \mathbb{D}_{\chi_1,\chi_2}(x)}}{x^{1/4}}&=-\infty. \label{ab1}
\end{align}
Both \eqref{aa1} and \eqref{ab1} remain valid if we replace $\Realp$ by $\Imp$, respectively, in \eqref{aa1} and \eqref{ab1}.
\end{theorem}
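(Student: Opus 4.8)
The plan is to follow the proof of Theorem \ref{at} almost verbatim, the one new ingredient being the functional equation appropriate to a pair consisting of one even and one odd character. First I would record, in place of \eqref{b}, the functional equation for $L(s,\chi_1)L(s,\chi_2)$ when $\chi_1$ is even modulo $p$ and $\chi_2$ is odd modulo $q$: after the same rescaling of the variable used to obtain \eqref{b}, the gamma factor becomes $\Gamma(s/2)\,\Gamma((s+1)/2)$ rather than $\Gamma^2(s/2)$, and the constant multiplying the right-hand side becomes $\tau(\chi_1)\tau(\chi_2)/(i\sqrt{pq})$ rather than $\tau(\chi_1)\tau(\chi_2)/\sqrt{pq}$, the extra $i$ being the root number of the odd character $\chi_2$. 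Reading off the data of Definition \ref{def}, one has $N=2$, $\delta=1$, $\alpha_1=\alpha_2=\tfrac12$, $\beta_1=0$, $\beta_2=\tfrac12$, so $A=1$ and, with $\rho=0$, $\theta=\tfrac14$ by \eqref{omega3}; moreover $a(n)=d_{\chi_1,\chi_2}(n)$, $b(n)=\tau(\chi_1)\tau(\chi_2)\,d_{\overline{\chi}_1,\overline{\chi}_2}(n)/(i\sqrt{pq})$, and $\lambda_n=\mu_n=\pi n/\sqrt{pq}$, exactly as in \eqref{p1}--\eqref{p2} up to the factor $i$. Since $\chi_1$ is even and non-principal, $L(0,\chi_1)=0$, so $\varphi(s)=L(s,\chi_1)L(s,\chi_2)$, which is entire, vanishes at $s=0$; hence the integrand in \eqref{poles} is regular at $s=0$ and $Q(x)=0$.

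It then remains to verify the two hypotheses of Theorem \ref{omega1}. For the representation hypothesis, $\mu_n^{1/(2A)}$ is a fixed positive multiple of $\sqrt{n}$, and the subsequence $\{\mu_{n_k}\}$ indexed by the primes in any fixed residue class modulo $pq$ works, because the square roots of distinct squarefree integers are linearly independent over $\mathbb{Q}$: a $\pm1$-combination of the $\sqrt{n_k}$ equals $\sqrt{n}$ only if $n$ is one of the $n_k$. For the divergence condition \eqref{omega2}, the exponent $(A\delta+\rho+\tfrac12)/(2A)$ equals $\tfrac34$, so I must exhibit a subset $\{n_k\}$ with $\sum_k|\Realp\,b(n_k)|\,n_k^{-3/4}=+\infty$. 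Set $\omega:=\tau(\chi_1)\tau(\chi_2)/(i\sqrt{pq})$, a complex number of modulus $1$. If $\Realp\,\omega\neq0$, take $\{n_k\}$ to be the primes $P\equiv1\pmod{pq}$ (infinitely many by Dirichlet's theorem); then $d_{\overline{\chi}_1,\overline{\chi}_2}(P)=\overline{\chi}_1(P)+\overline{\chi}_2(P)=2$, so $b(P)=2\omega$ and $\Realp\,b(P)=2\Realp\,\omega\neq0$, while $\sum_{P\equiv1\,(pq)}P^{-3/4}$ diverges. If instead $\Realp\,\omega=0$, then $\Realp\,b(n)=-\Imp(\omega)\,\Imp\,d_{\overline{\chi}_1,\overline{\chi}_2}(n)$, and I would choose a residue class $r$ coprime to $pq$ with $\overline{\chi}_1(r)+\overline{\chi}_2(r)\notin\mathbb{R}$; such $r$ exists, because if $\overline{\chi}_1(u)+\overline{\chi}_2(v)$ were real for all $u\pmod p$ and $v\pmod q$ then, fixing $u$ and summing over $v$ using $\sum_v\overline{\chi}_2(v)=0$, $\overline{\chi}_2$ and likewise $\overline{\chi}_1$ would be real, forcing $\tau(\chi_1)\tau(\chi_2)=i\sqrt{pq}$ and $\omega=1$, contrary to $\Realp\,\omega=0$. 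With this $r$, the primes $n_k\equiv r\pmod{pq}$ satisfy $d_{\overline{\chi}_1,\overline{\chi}_2}(n_k)=\overline{\chi}_1(r)+\overline{\chi}_2(r)$, which has nonzero imaginary part, so the series diverges again. The statements \eqref{aa1}, \eqref{ab1} with $\Realp$ replaced by $\Imp$ follow from the same subsets together with the last sentence of Theorem \ref{omega1}. Finally, applying Theorem \ref{omega1}, and undoing the rescaling of $x$ built into $\lambda_n=\pi n/\sqrt{pq}$ exactly as in the proof of Theorem \ref{at} (recall $Q(x)=0$), gives \eqref{aa1} and \eqref{ab1}.

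The main obstacle is the case $\Realp\,\omega=0$ of the divergence condition: one must be careful to rule out a conspiracy between $\chi_1$ and $\chi_2$ so that a residue class $r$ with $\overline{\chi}_1(r)+\overline{\chi}_2(r)$ non-real really exists, and one must track the root number of the odd character $\chi_2$ carefully so that the parameters still come out to $\delta=1$, $A=1$, $\theta=\tfrac14$ rather than a shifted exponent. The rest transcribes directly from the proof of Theorem \ref{at}.
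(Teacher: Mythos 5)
Your proposal is correct and follows essentially the same route as the paper: invoke the functional equation for $L(s,\chi_1)L(s,\chi_2)$ in the even--odd case, read off $\delta=A=1$, $\theta=\tfrac14$, note $Q(x)=0$ from $L(0,\chi_1)=0$, and verify \eqref{omega2} via Dirichlet's theorem with the same two-case analysis on whether $\tau(\chi_1)\tau(\chi_2)/(i\sqrt{pq})$ has vanishing real part. The only cosmetic difference is that you keep the gamma factor as $\Gamma(s/2)\Gamma((s+1)/2)$ ($N=2$) where the paper uses the duplication formula to work with a single $\Gamma(s)$ ($N=1$); both give $A=1$, and your extra care in justifying the existence of the residue class $r$ and the $\pm1$-representation hypothesis only strengthens the argument.
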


\begin{proof} Recall from \cite[p.~82]{bessel2bbskaz} that
\begin{gather*}
\Big(\frac{2\pi}{\sqrt{pq}}\Big)^{-s}\Gamma(s)L(s,
\chi_1)L(s,\chi_2)\\
=-\frac{i\tau(\chi_1)\tau(\chi_2)}{\sqrt{pq}}\Big(\frac{2\pi}{\sqrt{pq}}\Big)^{s-1}
\Gamma(1-s)L(1-s,\overline{\chi_1})L(1-s,\overline{\chi}_2).
\end{gather*}
We now apply Theorem \ref{bigO}.  The parameters from Definition \ref{def} and \eqref{omega3} are:
\begin{gather*}
N=1,\qquad \delta=1,\qquad A=1, \qquad \theta=\df14,\\
a(n)=d_{\chi_1,\chi_2}(n), \qquad b(n)=-i\tau(\chi_1)\tau(\chi_2)\df{d_{\overline{\chi}_1,\overline{\chi}_2}(n)}{\sqrt{pq}}, \qquad
\lambda_n=\mu_n=\df{2\pi n}{\sqrt{pq}}.
\end{gather*}
Since $L(0,\chi_1)=0$, and both $L(s,\chi_1)$ and $L(s,\chi_2)$ are entire functions, then $Q(x)=0$.
Theorem \ref{bt} now follows immediately provided that we can show that \eqref{omega2} holds.  The proof that \eqref{omega2} is valid is exactly the same as in the previous theorem.
\end{proof}

Our motivation for studying $d_{\chi_1,\chi_2}(n)$ is similar to that for Theorem \ref{at}. From \cite[p.~85, Equation (4.5)]{bessel2bbskaz}
\begin{align}\label{cs1}
\mathbb{C}\mathbb{S}(x):=&{\sum_{nm\leq x}}^{\prime}\cos\Big(\frac{2\pi
na}{p}\Big)\sin\Big(\frac{2\pi
mb}{q}\Big)\notag\\
=&\frac{1}{i\phi(p)\phi(q)}\sum_{\substack{\chi_1\bmod{p}\\
\chi_1\neq \chi_0, \text{ even}}}
\sum_{\substack{\chi_2\bmod{q}\\
\chi_2\text{ odd}}}\chi_1(a)\chi_2(b)\tau(\overline{\chi_1})
\tau(\overline{\chi_2}){\sum_{n\leq x}}^{\prime}d_{\chi_1, \chi_2}(n) \nonumber \\
&
-\frac{1}{\phi(p)}{\sum_{m\le x}}^{\prime}\left[\frac{x}{m}\right]\sin
\Big(\frac{2\pi mb}{q}\Big)
+\frac{p}{\phi(p)}{\sum_{m\leq x}}^{\prime}\left[\frac{x}{pm}\right]\sin
\Big(\frac{2\pi mb}{q}\Big).
\end{align}
As in  our study of $\mathbb{C}\mathbb{C}(x)$, we observe that multiple sums of the form
$${\sum_{n\leq x}}^{\prime} d_{\chi_1,\chi_2}(n)$$
arise. For and $\Omega$ theorem for  the first set of sums on the right-hand side of \eqref{cs1} appeal to  Theorem \ref{bt}.  For the second set of sums, we recall the identity \cite[p.~2068, Lemma 11]{besselII}
\begin{equation}\label{ssss}
    {\sum_{n\leq x}}^{\prime}\left[\df{x}{n}\right]
    \sin\left(\df{2\pi na}{p}\right)
    =-i\sum_{\substack{d|q\\d>1}}\df{1}{\phi(d)}
    \sum_{\substack{\chi \text{ mod } d\\\chi \text{ odd }}}\chi(a)\tau(\overline{\chi})
    {\sum_{n\leq dx/q}}^{\prime}d_{\chi}(n).
    \end{equation}
    Recall that the character sums on the far right-hand side of \eqref{ssss} were defined in \eqref{dchi} and \eqref{133}.  An $\Omega$ theorem analogous to Theorem \ref{bt} can also be established \cite[p.~133]{annals2}.
We cannot appeal directly to Theorem \ref{bt} and the aforementioned analogue in order to establish an $\Omega$ theorem for $\mathbb{C}\mathbb{S}(x)$.  We therefore must content ourselves to making the following conjecture.

\begin{conjecture} If $\mathbb{C}\mathbb{S}(x)$ is defined on the far left-hand side of \eqref{cs1}, then
\begin{align*}
\varlimsup_{x\to\infty}\df{ \mathbb{C}\mathbb{S}\,(x) }{x^{1/4}}&=+\infty, \\
\varliminf_{x\to\infty}\df{ \mathbb{C}\mathbb{S}\,(x)}{x^{1/4}}&=-\infty. \
\end{align*}
\end{conjecture}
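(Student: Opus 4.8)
The plan is to sidestep the decomposition \eqref{cs1} altogether and instead recognize $\mathbb{C}\mathbb{S}(x)$ itself as the summatory function in \eqref{omega6} (with $\rho=0$) of a single Dirichlet series to which Theorem \ref{omega1} applies directly, in exact analogy with the way Theorem \ref{bt} is obtained from $L(s,\chi_{1})L(s,\chi_{2})$. Set
$$\varphi(s):=\Bigg(\sum_{d=1}^{\infty}\frac{\cos(2\pi da/p)}{d^{s}}\Bigg)\Bigg(\sum_{e=1}^{\infty}\frac{\sin(2\pi eb/q)}{e^{s}}\Bigg)=\sum_{n=1}^{\infty}\frac{c(n)}{n^{s}},\qquad c(n):=\sum_{de=n}\cos\!\Big(\frac{2\pi da}{p}\Big)\sin\!\Big(\frac{2\pi eb}{q}\Big),$$
which converges absolutely for $\Realp s>1$ and has partial-sum function ${\sum_{n\leq x}}^{\prime}c(n)=\mathbb{C}\mathbb{S}(x)$; so everything reduces to checking that $\varphi$ fits the framework of Definition \ref{def} and to verifying the two hypotheses of Theorem \ref{omega1}.

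First I would obtain the functional equation by transforming each factor with the Hurwitz--Lerch formula. The cosine factor is even and, carrying the gamma factor $\Gamma(1-s)\sin(\tfrac{\pi s}{2})$ (together with powers of $2\pi$ and of $p$), becomes $\sum_{m\equiv\pm a\pmod p}m^{s-1}$; the sine factor is odd and, carrying $\Gamma(1-s)\cos(\tfrac{\pi s}{2})$ (and powers of $2\pi$ and $q$), becomes $\sum_{m\equiv b\pmod q}m^{s-1}-\sum_{m\equiv -b\pmod q}m^{s-1}$. Multiplying the two factors and using $\sin(\tfrac{\pi s}{2})\cos(\tfrac{\pi s}{2})=\tfrac12\sin(\pi s)$ together with $\Gamma(s)\Gamma(1-s)=\pi/\sin(\pi s)$ amalgamates the two gamma factors into a single $\Gamma(s)$, so that after rescaling $\lambda_{n}=\mu_{n}=2\pi n/\sqrt{pq}$ one reaches a relation of the precise form \eqref{fe} with $\Delta(s)=\Gamma(s)$ (hence $N=1$, $A=1$), $\delta=1$, and dual coefficients $b(n)$ equal to a fixed nonzero real constant times $\widetilde b(n):=\sum_{d\mid n}g_{1}(d)g_{2}(n/d)$, where $g_{1}(d)$ is $1$ or $0$ according as $d\equiv\pm a\pmod p$ or not, and $g_{2}(e)$ is $+1,-1,0$ according as $e\equiv b$, $e\equiv-b$, or $e\not\equiv\pm b\pmod q$. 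A direct check --- for instance via $\tfrac12\mathrm{Li}_{s}(e^{2\pi i a/p})+\tfrac12\mathrm{Li}_{s}(e^{-2\pi i a/p})$ and its sine analogue, both entire in $s$ --- shows that $\varphi$ is entire, that $\chi(s):=\Delta(s)\varphi(s)$ decays in vertical strips by Stirling, and that Definition \ref{def} is satisfied; then the only singularity of the integrand of \eqref{poles} to the right of the contour is the simple pole of $\Gamma(s)$ at $s=0$, so $Q(x)$ collapses to the constant $\varphi(0)=-\tfrac14\cot(\pi b/q)$ (the same constant that occurs in the $k=0$ case of Theorem \ref{BI}). With these parameters $\theta$ in \eqref{omega3} equals $\tfrac14$.

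Next I would verify the two hypotheses of Theorem \ref{omega1}. For the independence hypothesis, take $\{\mu_{n_{k}}\}$ to run over $\{2\pi n/\sqrt{pq}:n\ \text{squarefree},\ (n,pq)=1\}$; since $\mu_{n}^{1/(2A)}=\mu_{n}^{1/2}$ is a fixed constant times $\sqrt n$, the required non-representability of any $\mu_{n}^{1/2}$ as a $\pm1$-combination of the $\mu_{n_{k}}^{1/2}$ follows at once from the $\mathbb{Q}$-linear independence of $\{\sqrt m:m\ \text{squarefree}\}$. For the divergence hypothesis \eqref{omega2}, whose exponent is $(A\delta+\rho+\tfrac12)/(2A)=\tfrac34$, I would restrict $n_{k}$ further to products $P_{1}P_{2}$ of two distinct primes with $P_{1}\equiv a\pmod p$, $P_{1}\not\equiv\pm b\pmod q$, $P_{2}\equiv b\pmod q$, and $P_{2}\not\equiv\pm a\pmod p$; then all cross terms vanish and $\widetilde b(P_{1}P_{2})=g_{1}(P_{1})g_{2}(P_{2})=\pm1$, while Dirichlet's theorem on primes in arithmetic progressions furnishes $\gg x/\log x$ such $n_{k}\leq x$, whence ${\sum_{n_{k}\leq x}}|\Realp b(n_{k})|\,\mu_{n_{k}}^{-3/4}\gg x^{1/4}/\log x\to\infty$. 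Theorem \ref{omega1} would then give \eqref{omega4}--\eqref{omega5} for $A(x)-Q(x)$; since $Q$ is bounded and the rescaling $x\mapsto\tfrac{\sqrt{pq}}{2\pi}x$ affects neither the exponent $\tfrac14$ nor the conclusion, this yields $\varlimsup_{x\to\infty}\mathbb{C}\mathbb{S}(x)/x^{1/4}=+\infty$ and $\varliminf_{x\to\infty}\mathbb{C}\mathbb{S}(x)/x^{1/4}=-\infty$, as asserted. (The companion conjecture for $\mathbb{C}\mathbb{C}(x)$ would be treated identically, except that both factors are now even, so one lands on $\Delta(s)=\Gamma(\tfrac s2)^{2}$, i.e.\ $N=2$, and $\psi$ carries a genuine linear main term coming from the principal characters.)

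The step I expect to require the most care --- the main obstacle --- is the functional equation: one must carry out the Hurwitz--Lerch transformation cleanly, keep exact track of the conductor $\sqrt{pq}$ and of the constant so that the identity is literally of the form \eqref{fe} in Definition \ref{def}, and in particular confirm that the ``even'' and ``odd'' gamma factors really do collapse to a single $\Gamma(s)$. It is precisely this amalgamation that makes $\mathbb{C}\mathbb{S}(x)$ amenable to a single application of Theorem \ref{omega1}, whereas the individual pieces of \eqref{cs1} carry incompatible conductors and cannot simply be recombined. A secondary but essential subtlety lies in \eqref{omega2}: since $\widetilde b$ is a signed restricted-divisor function it vanishes identically on the primes whenever $a\not\equiv\pm1\pmod p$ and $b\not\equiv\pm1\pmod q$, which is why one is forced to pass to products of two primes and to impose the additional congruences on $P_{1},P_{2}$ above so as to prevent cancellation in $\widetilde b(P_{1}P_{2})$.
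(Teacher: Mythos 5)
Your proposal attacks a statement that the paper does not prove at all: it is left as a conjecture there precisely because \eqref{cs1} writes $\mathbb{C}\mathbb{S}(x)$ as a linear combination of many summatory functions, each individually $\Omega_{\pm}(x^{1/4})$, and $\Omega$-estimates do not add. Your idea of bypassing \eqref{cs1} and applying Theorem \ref{omega1} directly to $\varphi(s)=\big(\sum_d\cos(2\pi da/p)d^{-s}\big)\big(\sum_e\sin(2\pi eb/q)e^{-s}\big)$ is sound, and the step you single out as the main obstacle does check out: by Hurwitz's formula the even factor carries $\cos(\pi s/2)$ and the odd factor $\sin(\pi s/2)$; together with $\Gamma^{-2}(s)\sin^{-1}(\pi s)$ these collapse to $\Gamma(1-s)/\Gamma(s)$, and with $\lambda_n=\mu_n=2\pi n/\sqrt{pq}$ and $b(n)=\tfrac{\sqrt{pq}}{4}\,\widetilde b(n)$ one obtains literally $\Gamma(s)\varphi(s)=\Gamma(1-s)\psi(1-s)$, i.e.\ \eqref{fe} with $N=A=\delta=1$ — the same shape as the even-times-odd functional equation underlying Theorem \ref{bt}. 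Moreover $\varphi$ is entire, $Q(x)=\varphi(0)=-\tfrac14\cot(\pi b/q)$ is constant, \eqref{omega3} gives $\theta=\tfrac14$, and squarefree integers coprime to $pq$ satisfy the independence hypothesis. If completed, this argument would upgrade the conjecture to a theorem, which is a genuine contribution beyond the paper.

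The one genuine gap is in the verification of \eqref{omega2}. For $n=P_1P_2$ the four divisor pairs contribute $g_1(P_1)g_2(P_2)+g_1(P_2)g_2(P_1)+g_1(1)g_2(P_1P_2)+g_1(P_1P_2)g_2(1)$. Your conditions kill the second term, but the third term does \emph{not} vanish when $a\equiv\pm1\pmod{p}$: then $g_1(1)=1$, and if the (uncontrolled) residue of $P_1$ modulo $q$ happens to be $-1$, then $g_2(P_1P_2)=g_2(-b)=-1$ and $\widetilde b(P_1P_2)=0$ for every such pair. Likewise the fourth term survives when $b\equiv\pm1\pmod{q}$, and for $q=3$ the requirement $P_1\not\equiv\pm b\pmod{q}$ is unsatisfiable since $\{\pm b\}$ exhausts the reduced residues. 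So ``all cross terms vanish'' is false as stated, and in the excluded cases your chosen subsequence could consist entirely of $n_k$ with $\widetilde b(n_k)=0$. The repair is routine — when $a\equiv\pm1\pmod{p}$ single primes $P\equiv b\pmod{q}$ already give $\widetilde b(P)\neq 0$ (with a sign adjustment if also $b\equiv -1\pmod q$), when $b\equiv\pm1\pmod{q}$ single primes $P\equiv a\pmod{p}$ work, and your two-prime construction with the extra conditions $P_1\not\equiv\pm1\pmod{q}$, $P_2\not\equiv\pm1\pmod{p}$ handles the remaining case — exactly the style of two-case argument used in the paper's proof of Theorem \ref{at}. Until that case analysis is written out, the divergence \eqref{omega2} is not established for all admissible $a,p,b,q$.
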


\begin{theorem}\label{cscs} For each $\epsilon>0$, as $x\to \infty$,
$$\mathbb{D}_{\chi_1,\chi_2}(x)=O(x^{1/3+\epsilon}).$$
\end{theorem}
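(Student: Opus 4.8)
The plan is to apply Theorem~\ref{bigO} in essentially the same way as in the proof of Theorem~\ref{O1}, the only change being that the functional equation \eqref{b} with its factor $\Gamma^2(\tf12 s)$ is replaced by the one recorded in the proof of Theorem~\ref{bt}, whose archimedean factor is $\Gamma(s)$. Concretely, I would take $N=1$, $\delta=1$, $A=1$, $a(n)=d_{\chi_1,\chi_2}(n)$, $b(n)=-i\tau(\chi_1)\tau(\chi_2)d_{\overline{\chi}_1,\overline{\chi}_2}(n)/\sqrt{pq}$, and $\lambda_n=\mu_n=2\pi n/\sqrt{pq}$. The hypotheses of Theorem~\ref{bigO} are met: since $\chi_1$ and $\chi_2$ are non-principal, $L(s,\chi_1)$ and $L(s,\chi_2)$ are entire, so $\varphi(s)$ is entire; and because $\chi_1$ is even and non-principal, $L(0,\chi_1)=0$, whence $Q(x)=0$, exactly as in the proof of Theorem~\ref{bt}. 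Up to a harmless rescaling of the variable, Theorem~\ref{bigO} therefore applies directly to $A(x)=\mathbb{D}_{\chi_1,\chi_2}(x)$.

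Next I would run the optimization of the two error exponents in \eqref{bigO1}. Since $|d_{\overline{\chi}_1,\overline{\chi}_2}(n)|\leq d(n)$, the series $\sum_{n\geq1}|b(n)|\mu_n^{-\beta}$ converges for $\beta=1+\epsilon$, so by \eqref{bigO3} we may take $u=\beta-\tf12\delta-\tf1{4A}=\tf14+\epsilon$; the first exponent in \eqref{bigO1} is then $\tf12\delta-\tf1{4A}+2Au\eta=\tf14+(\tf12+2\epsilon)\eta$. For the second term, \eqref{bigO2} gives $x'=x+O(x^{1/2-\eta})$, and the trivial bound $|a(n)|\leq d(n)$ together with the classical estimate $\sum_{n\leq y}d(n)=y\log y+O(y)$ (cf.\ \eqref{d(n)bigO}) yields $\sum_{x<\lambda_n\leq x'}|a(n)|=O(x^{1/2-\eta}\log x)$. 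Balancing as in \eqref{etaeta}, i.e.\ solving $\tf14+(\tf12+2\epsilon)\eta=\tf12-\eta+\epsilon'$, produces the optimal choice $\eta=\tf16$ and common exponent $\tf13$ (up to an arbitrarily small positive amount), giving $\mathbb{D}_{\chi_1,\chi_2}(x)=O(x^{1/3+\epsilon})$.

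I do not expect a genuine obstacle: passing from the even--even setting of Theorem~\ref{O1} to the present even--odd setting changes $N$ from $2$ to $1$, but leaves $A=1$ and $\delta=1$ unchanged, hence leaves every exponent in the computation unchanged, so the proof is word-for-word the one for Theorem~\ref{O1}. The only points that genuinely require a line of verification are that $\varphi$ is entire and that $Q(x)=0$, and both are immediate from the non-principality of $\chi_1,\chi_2$ and the vanishing $L(0,\chi_1)=0$. (If one prefers, the $\tf12 d_{\chi_1,\chi_2}(x)$ discrepancy between the prime-convention sum in \eqref{omega6} and the sum \eqref{D} at integer $x$ is $O(x^{\epsilon})$ and may be absorbed into the error term.)
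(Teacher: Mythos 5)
Your proposal is correct and follows exactly the paper's route: the paper proves Theorem \ref{cscs} by observing that $A=1$ and $\delta=1$ are the same as in Theorem \ref{O1} and repeating that argument verbatim, which is precisely the computation you carry out (with the correct observations that $\varphi$ is entire and $Q(x)=0$ via $L(0,\chi_1)=0$). No gaps.
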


\begin{proof} Because the values of $A$ and $\delta$ are identical to those in the proof of Theorem \ref{O1}, the proof of Theorem \ref{cscs} is the same as that for Theorem \ref{O1}.
\end{proof}

\begin{theorem}\label{O3} For each $\epsilon>0$, as $x\to \infty$,
$$\mathbb{C}\mathbb{S}(x)=O(x^{1/3+\epsilon}).$$
\end{theorem}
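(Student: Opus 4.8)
The plan is to follow the proof of Theorem~\ref{222} verbatim in structure, using the decomposition \eqref{cs1} of $\mathbb{C}\mathbb{S}(x)$ in place of \eqref{ccc1}. First I would split $\mathbb{C}\mathbb{S}(x)$ into three groups of terms: (i) the finite linear combination, with coefficients bounded independently of $x$, of the twisted divisor sums $\sum_{n\le x}^{\prime} d_{\chi_1,\chi_2}(n)$ over non-principal even $\chi_1\bmod p$ and odd $\chi_2\bmod q$; (ii) the sum $-\tfrac{1}{\phi(p)}\sum_{m\le x}^{\prime}[x/m]\sin(2\pi mb/q)$; and (iii) the sum $\tfrac{p}{\phi(p)}\sum_{m\le x}^{\prime}[x/(pm)]\sin(2\pi mb/q)$. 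It then suffices to show that each of (i), (ii), (iii) is $O(x^{1/3+\epsilon})$. For (i), Theorem~\ref{cscs} already gives $\mathbb{D}_{\chi_1,\chi_2}(x)=O(x^{1/3+\epsilon})$ for every admissible pair $(\chi_1,\chi_2)$; since only finitely many characters occur, summing these estimates against the bounded coefficients in \eqref{cs1} yields the bound for the whole group.

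For (ii) and (iii) I would invoke \eqref{ssss}. Because $q$ is prime, the only $d\mid q$ with $d>1$ is $d=q$, so \eqref{ssss} expresses $\sum_{m\le y}^{\prime}[y/m]\sin(2\pi mb/q)$ as $-i$ times a finite linear combination of the single-character divisor sums $D_\chi(y)=\sum_{m\le y}^{\prime} d_\chi(m)$ over odd $\chi\bmod q$. The Dirichlet series $\sum_n d_\chi(n)n^{-s}=\zeta(s)L(s,\chi)$ has only the simple pole of $\zeta$ at $s=1$, with residue $L(1,\chi)$, so $D_\chi(y)=L(1,\chi)\,y+O(y^{1/3+\epsilon})$; the error bound comes from Theorem~\ref{bigO} applied, after subtracting the polar term, to the functional equation of $\zeta(s)L(s,\chi)$, with the short sum $\sum_{y<\lambda_n\le y^{\prime}}|a(n)|$ controlled by $|d_\chi(n)|\le d(n)$ together with \eqref{d(n)bigO}, exactly as in the proofs of Theorems~\ref{O1} and \ref{cscs}. (Equivalently, the main term and the order of the error term can be read off from Entry~\ref{besselseries}.) Substituting into (ii) and (iii) and adding, the linear main terms cancel: the weight $p$ in (iii) is exactly offset by the rescaling $y=x/p$ of the argument, so the $x$-coefficient $\tfrac{p}{\phi(p)}L(1,\chi)(x/p)=\tfrac{1}{\phi(p)}L(1,\chi)x$ matches that of $-\tfrac{1}{\phi(p)}D_\chi(x)$, while the $\cot(\pi b/q)$-type constants collapse to $O(1)$. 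Hence (ii) $+$ (iii) $=O(x^{1/3+\epsilon})$, and combining the three groups completes the proof.

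The step requiring genuine care, rather than a mechanical citation, is establishing $D_\chi(y)=L(1,\chi)y+O(y^{1/3+\epsilon})$ for the single odd twist and then verifying that the linear main terms of (ii) and (iii) cancel in the combination occurring in \eqref{cs1}; once these are in place, the remainder is a routine rerun of the argument for Theorem~\ref{222}, and in particular no new analytic input beyond Theorem~\ref{bigO} and \eqref{d(n)bigO} is needed.
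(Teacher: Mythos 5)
Your proposal is correct and follows the same route as the paper: decompose $\mathbb{C}\mathbb{S}(x)$ via \eqref{cs1}, bound the twisted divisor sums by Theorem \ref{cscs}, and reduce the two remaining sine sums to the single-character divisor sums $D_{\chi}$ via \eqref{ssss}. Your handling of the last step is in fact more careful than the paper's one-line dismissal: since $D_{\chi}(y)=L(1,\chi)\,y+O(y^{1/3+\epsilon})$ with $L(1,\chi)\neq 0$, each of the two sums in \eqref{cs1} involving $[x/m]$ and $[x/(pm)]$ is individually of order $x$ (consistent with the main term $\pi x(\tfrac{1}{2}-\theta)$ in Entry \ref{besselseries}), so the assertion that every divisor sum arising from \eqref{ssss} is bounded by \eqref{d(n)bigO} cannot be taken literally; your explicit verification that the linear main terms cancel between the two terms, the factor $p/\phi(p)$ being offset by the rescaling $x\mapsto x/p$, is exactly the point that makes the argument complete.
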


\begin{proof}  Referring to \eqref{cs1}, we see that each member of the first set of sums on the right-hand side of \eqref{cs1} satisfies the bounds of Theorem \ref{cscs}.  For the second set of sums, refer to \eqref{ssss}.
    Since $|d_{\chi}(n)|\leq d(n)$, each of the divisor sums on the right-hand side of \eqref{cs1} has an upper bound given by \eqref{d(n)bigO}.  Hence, the proof of Theorem \ref{O3} is complete.
\end{proof}

\section{The Third $\Omega$ and ``Big O" Theorems and Conjecture}\label{third}

The third general $\Omega$ theorem is similar to Theorems \ref{at} and \ref{bt}.

\begin{theorem}\label{ct} Assume that $\chi_1$ and $\chi_2$ are non-principal primitive odd characters modulo $p$ and $q$, respectively. Define
 \begin{equation}\label{D^*}
 {\mathbb{D}}^*_{\chi_1,\chi_2}(x):={\sum_{n\leq x}}^{\prime}nd_{\chi_1,\chi_2}(n).
 \end{equation}
Then
\begin{align}
\varlimsup_{x\to\infty}\df{\Realp{\, {\mathbb{D}}^*_{\chi_1,\chi_2}(x)}}{x^{5/4}}&=+\infty, \label{aa11}\\
\varliminf_{x\to\infty}\df{\Realp{\, {\mathbb{D}}^*_{\chi_1,\chi_2}}^*(x)}{x^{5/4}}&=-\infty. \label{ab11}
\end{align}
Both \eqref{aa11} and \eqref{ab11} remain valid if we replace $\Realp$ by $\Imp$ in each of \eqref{aa11} and \eqref{ab11}.
\end{theorem}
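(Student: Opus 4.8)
The plan is to deduce Theorem~\ref{ct} from the Chandrasekharan--Narasimhan $\Omega$-theorem (Theorem~\ref{omega1}), in the same spirit as the proofs of Theorems~\ref{at} and~\ref{bt}, but now with $\rho=0$ and $\delta=3$. One starts from the functional equation for $L(s,\chi_1)L(s,\chi_2)$ with $\chi_1,\chi_2$ primitive odd characters modulo the primes $p$ and $q$ (the analogue in \cite{bessel2bbskaz} of the functional equations used there in the even--even and even--odd cases); its completed form carries the gamma factor $\Gamma^{2}\!\big(\tfrac{s+1}{2}\big)$ and relates $s\leftrightarrow 1-s$. Since ${\mathbb{D}}^*_{\chi_1,\chi_2}$ carries the extra weight $n$, I would replace $s$ by $s-1$ and work with
$$\varphi(s):=\Big(\tfrac{pq}{\pi^{2}}\Big)^{s/2}L(s-1,\chi_1)L(s-1,\chi_2)=\sum_{n=1}^{\infty}\frac{n\,d_{\chi_1,\chi_2}(n)}{(\pi n/\sqrt{pq})^{s}},\qquad \Realp(s)>2,$$
which then satisfies a functional equation of the form $\Delta(s)\varphi(s)=\Delta(3-s)\psi(3-s)$ with $\Delta(s)=\Gamma^{2}(s/2)$. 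In the notation of Definition~\ref{def} this gives $N=2$, $\alpha_1=\alpha_2=\tfrac12$ (hence $A=1$), $\delta=3$, $\rho=0$, $\lambda_n=\mu_n=\pi n/\sqrt{pq}$, $a(n)=n\,d_{\chi_1,\chi_2}(n)$, and $b(n)=-\dfrac{\tau(\chi_1)\tau(\chi_2)}{\sqrt{pq}}\,n\,d_{\overline{\chi}_1,\overline{\chi}_2}(n)$, the sign $-1$ being $1/i^{2}$ from the two odd root numbers. By \eqref{omega3}, $\theta=\dfrac{A\delta-\tfrac12}{2A}=\dfrac54$, and $A(x)={\mathbb{D}}^*_{\chi_1,\chi_2}\!\big(\sqrt{pq}\,x/\pi\big)$, so the conclusions \eqref{omega4} and \eqref{omega5} at exponent $\theta=\tfrac54$ rescale immediately to \eqref{aa11} and \eqref{ab11}.

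Next I would check the hypotheses of Theorem~\ref{omega1}. First, $Q(x)=0$: since $\chi_1,\chi_2$ are non-principal, $\varphi$ is entire, and with $\rho=0$ the integrand in \eqref{poles} is $\varphi(s)x^{s}/s$, whose only pole, at $s=0$, has residue $\varphi(0)=L(-1,\chi_1)L(-1,\chi_2)$, and this vanishes because $L(-1,\chi)=-\tfrac12 B_{2,\chi}=0$ for an odd character. Second, the arithmetic-independence condition on $\{\mu_n\}$: take the subset indexed by the primes $P_k$ lying in a fixed residue class modulo $pq$, infinitely many of which exist by Dirichlet's theorem; since $2A=2$, one needs that $\sqrt{n}$ is never a $\pm1$-combination of distinct numbers $\sqrt{P_k}$ unless $n=P_k$ for a single $k$, which follows by squaring such a relation and invoking the $\mathbb{Q}$-linear independence of the square roots of squarefree integers. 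Third, the divergence condition \eqref{omega2}: the pertinent exponent is $(A\delta+\rho+\tfrac12)/(2A)=\tfrac74$; choosing $P_k\equiv1\pmod{pq}$ gives $d_{\overline{\chi}_1,\overline{\chi}_2}(P_k)=\overline{\chi}_1(P_k)+\overline{\chi}_2(P_k)=2$, so that $|\Realp\,b(P_k)|\asymp P_k$ whenever $\tau(\chi_1)\tau(\chi_2)$ is not purely imaginary, and then
$$\sum_{k}\frac{|\Realp\,b(P_k)|}{\mu_{P_k}^{7/4}}\asymp\sum_{k}\frac{P_k}{P_k^{7/4}}=\sum_{k}P_k^{-3/4}=+\infty,$$
since $\sum P_k^{-3/4}$ diverges over primes in an arithmetic progression. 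In the remaining case, $\tau(\chi_1)\tau(\chi_2)$ purely imaginary, I would, exactly as in the proof of Theorem~\ref{at}, choose $r$ with $\chi_1(r)+\chi_2(r)\notin\mathbb{R}$, work with $P_k\equiv r\pmod{pq}$, and replace $\Realp$ by $\Imp$. Theorem~\ref{omega1} then yields \eqref{aa11} and \eqref{ab11}, together with their $\Imp$-versions, which is the assertion of Theorem~\ref{ct}.

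The step I expect to need the most care is the bookkeeping in transporting the $s\leftrightarrow 1-s$ functional equation of $L(s,\chi_1)L(s,\chi_2)$ to the $s\leftrightarrow 3-s$ functional equation of the $n$-weighted series, and correctly reading off $\delta=3$, $\theta=\tfrac54$, and the exponent $\tfrac74$ in \eqref{omega2}. Conveniently, the extra factor $n$ present in both $a(n)$ and $b(n)$ lowers the exponent in the divergence test from $\tfrac74$ to $\tfrac34<1$, which is exactly what is needed for the test series over primes in an arithmetic progression to diverge; apart from this, the argument runs parallel to the proofs of Theorems~\ref{at} and~\ref{bt}.
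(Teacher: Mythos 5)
Your proposal is correct and follows essentially the same route as the paper: the functional equation for $L(s-1,\chi_1)L(s-1,\chi_2)$ with $\Delta(s)=\Gamma^2(\tfrac12 s)$, the parameters $N=2$, $\delta=3$, $A=1$, $\theta=\tfrac54$, the vanishing $Q(x)=0$ from $L(-1,\chi_j)=0$ for odd $\chi_j$, and the verification of \eqref{omega2} over primes in arithmetic progressions split according to whether $\tau(\chi_1)\tau(\chi_2)$ is purely imaginary. Your extra details (the explicit $B_{2,\chi}$ computation and the check of the $\pm1$-independence hypothesis via linear independence of square roots of distinct squarefree integers) go slightly beyond what the paper writes out but are consistent with it.
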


\begin{proof}
The relevant functional equation is \cite[top of page 89]{bessel2bbskaz} (with $s$ replaced by $\tf12 s$),
\begin{align*}
&\frac{\pi^{-s}}{(pq)^{-s/2}}\Gamma^2\left(\df12 s\right)L(s-1, \chi_1)L(s-1,\chi_2)\\
&\qquad \qquad
=-\frac{\tau(\chi_1)\tau(\chi_2)}{\sqrt{pq}}\frac{\pi^{-(3-s)}}{(pq)^{-(3-s)/2}}
\Gamma^2\left(\dfrac{1}{2}(3-s)\right)L(2-s,\overline{\chi}_1)L(2-s,\overline{\chi}_2).
\end{align*}
Note that
$$L(s-1,\chi_1)L(s-1,\chi_2)=\sum_{n=1}^{\infty}\frac{\chi_1(n)}{n^{s-1}}
\sum_{m=1}^{\infty}\frac{\chi_2(m)}{m^{s-1}}
=\sum_{n=1}^{\infty}\frac{nd_{\chi_1, \chi_2}(n)}{n^{s}}.$$
In the notation of Definition \ref{def} and \eqref{omega3},
\begin{gather}
N=2,\qquad \delta=3,\qquad A=1, \qquad \theta=\df54,\label{3a}\\
a(n)=nd_{\chi_1,\chi_2}(n), \qquad b(n)=-\tau(\chi_1)\tau(\chi_2)\df{nd_{\overline{\chi}_1,\overline{\chi}_2 }(n)}{\sqrt{pq}}, \qquad
\lambda_n=\mu_n=\df{\pi n}{\sqrt{pq}}.\notag\label{3b}
\end{gather}
(In \cite[p.~89]{bessel2bbskaz} the factor $n$ is missing from the definition of $a(n)$.)  Since $\chi_1$ and $\chi_2$ are odd primitive characters, $L(2s-1,\chi_1)$ and $L(2s-1,\chi_2)$ are both entire functions of $s$ which vanish at $s=0$.  Hence, $Q(x)=0$.
Theorem \ref{ct} now follows if we can show that \eqref{omega2} is valid.  This can be shown in the same way as given in the proof of Theorem \ref{at}.
\end{proof}

Let
\begin{equation}\label{ss}
\mathbb{S}\mathbb{S}(x):={\sum_{mn\leq x}}^{\prime}mn\sin(2\pi na/p)\sin(2\pi mb/q).
\end{equation}
From \cite[p.~91, Equation (5.13)]{bessel2bbskaz},
\begin{equation}\label{v1}
\mathbb{S}\mathbb{S}(x)
=-\frac{1}{\phi(p)\phi(q)}\sum_{\substack{\chi_1\bmod{p}\\ \chi_1
\text{ odd}}} \sum_{\substack{\chi_2\bmod{q}\\ \chi_2 \text{
odd}}}\chi_1(a)\chi_2(b)\tau(\overline{\chi}_1)
\tau(\overline{\chi}_2){\mathbb{D}}^*_{\chi_1,\chi_2}(x),
\end{equation}
where ${\mathbb{D}}^*_{\chi_1,\chi_2}(x)$ is defined in \eqref{D^*}.
Thus, since $\mathbb{S}\mathbb{S}(x)$ is a linear combination of the sums ${\mathbb{D}}^*_{\chi_1,\chi_2}(x)$,  we cannot appeal directly to Theorem \ref{omega1}.  Thus, we make the following conjecture.

\begin{conjecture}\label{3tttt} If $\mathbb{S}\mathbb{S}(x)$ is defined by \eqref{ss}, then
\begin{align*}
\varlimsup_{x\to\infty}\df{ \mathbb{S}\mathbb{S}(x) }{x^{5/4}}&=+\infty,\\
\varliminf_{x\to\infty}\df{ \mathbb{S}\mathbb{S}(x)}{x^{5/4}}&=-\infty. \label{cc222}
\end{align*}
\end{conjecture}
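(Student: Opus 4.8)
We conclude this section by sketching a possible line of attack on Conjecture \ref{3tttt}; the point at which the argument becomes delicate is flagged at the end. Throughout assume $p\nmid a$ and $q\nmid b$, without which the statement is vacuous. The plan is to apply Theorem \ref{omega1} directly to the Dirichlet series attached to $\mathbb{S}\mathbb{S}(x)$, rather than to the individual summands ${\mathbb{D}}^*_{\chi_1,\chi_2}(x)$ appearing in \eqref{v1}. First one sets
\begin{equation*}
\varphi(s):=\sum_{n=1}^{\infty}\frac{a(n)}{n^{s}},\qquad
a(n):=n\sum_{d\mid n}\sin\Big(\frac{2\pi d a}{p}\Big)\sin\Big(\frac{2\pi (n/d) b}{q}\Big),
\end{equation*}
so that $\mathbb{S}\mathbb{S}(x)={\sum_{n\le x}}^{\prime}a(n)$; by \eqref{v1}, together with the fact that $L(w,\chi)$ is entire for an odd primitive character $\chi$, the function $\varphi$ is entire. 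The next step is to apply the functional equation from the proof of Theorem \ref{ct} to each product $L(s-1,\chi_1)L(s-1,\chi_2)$ occurring in \eqref{v1}: using $\tau(\chi)\tau(\overline{\chi})=-p$ for an odd character $\chi$ modulo the prime $p$, the Gauss sums recombine and one arrives at a single functional equation of the type \eqref{fe} for $\varphi$, with
\begin{equation*}
N=2,\qquad \alpha_1=\alpha_2=\tfrac12,\qquad A=1,\qquad \delta=3,\qquad \lambda_n=\mu_n=\frac{\pi n}{\sqrt{pq}},
\end{equation*}
so that $\theta=\tfrac54$ in \eqref{omega3}. Since $L(-1,\chi)=0$ for an odd character $\chi$, one has $\varphi(0)=0$, hence $Q(x)=0$; the analytic continuation and the decay $\chi(s)\to0$ required in Definition \ref{def} follow from the rapid decay of $\Gamma(s/2)^2$ in vertical strips together with standard convexity bounds for Dirichlet $L$-functions. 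Theorem \ref{omega1} would then yield \eqref{omega4} and \eqref{omega5}, which, after undoing the harmless scaling $\lambda_n=\pi n/\sqrt{pq}$, are precisely the two statements of Conjecture \ref{3tttt}.

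The remaining, more delicate step is to verify the two hypotheses of Theorem \ref{omega1}: the linear-independence condition on a subset $\{n_k\}$ of the indices, and the divergence \eqref{omega2}, which --- since the dual coefficients $b(n)$ turn out to be real --- here reads $\sum_k |b(n_k)|\,\mu_{n_k}^{-7/4}=+\infty$. Character orthogonality shows that $b(n)$ is a real multiple of $n\sum_{d\mid n}f_p(d)\,f_q(n/d)$, where $f_p(d)$ equals $1$, $-1$, or $0$ according as $d\equiv a$, $d\equiv -a$, or $d\not\equiv\pm a$ modulo $p$, and $f_q$ is defined the same way with $(a,p)$ replaced by $(b,q)$. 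Here the argument must depart from the proofs of Theorems \ref{at}, \ref{bt}, and \ref{ct}. When $a\equiv\pm1\pmod p$ or $b\equiv\pm1\pmod q$, one checks that $b(P)\ne0$ for primes $P$ in suitable residue classes, and $\{n_k\}$ can again be taken to consist of primes; but when $a\not\equiv\pm1\pmod p$ and $b\not\equiv\pm1\pmod q$ one has $b(P)=0$ for every prime $P$, and composite indices are forced. The natural choice is $n_k=QP_k$, with $Q$ a fixed prime and $P_k$ running over the primes, where one picks the residues of $Q$ and of the $P_k$ modulo $pq$, using the Chinese Remainder Theorem and Dirichlet's theorem on primes in arithmetic progressions, so that $d=P_k$ is the only divisor of $n_k$ contributing to $\sum_{d\mid n_k}f_p(d)\,f_q(n_k/d)$. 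Then $b(n_k)\ne0$ with $|b(n_k)|\gg_{p,q} n_k$, hence $|b(n_k)|\,\mu_{n_k}^{-7/4}\gg_{p,q,Q}P_k^{-3/4}$ and the series diverges over the primes $P_k$ in a fixed progression. For the linear-independence condition one uses that $\sqrt{n_k}=\sqrt{Q}\,\sqrt{P_k}$ and that the numbers $\sqrt{P_{k_i}P_{k_j}}$ with $i<j$, together with $1$, are linearly independent over $\mathbb{Q}$: squaring a relation $\sqrt{n}=\sqrt{Q}\sum_i\pm\sqrt{P_{k_i}}$ forces every mixed term to vanish, leaving a single summand, whence $n=QP_{k_1}=n_{k_1}$ with no other representation.

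I expect the main obstacle to be making the previous paragraph uniform in $a,b,p,q$. Unlike in Theorems \ref{at}, \ref{bt}, and \ref{ct}, where the dual coefficient is supported on the primes, here $b(n)$ is supported on integers carrying a prescribed two-factor divisibility structure, so one must arrange simultaneously that $b(n_k)\ne0$ with no cancellation among the divisors of $n_k$ --- which needs a short case analysis when $p$ or $q$ is small, or when $a$ or $b$ is $\pm1$ --- and that the $\sqrt{n_k}$ still satisfy the rigid $\pm1$-independence-and-uniqueness requirement of Theorem \ref{omega1}. The same scheme should in principle also treat the conjectures for $\mathbb{C}\mathbb{C}(x)$ and $\mathbb{C}\mathbb{S}(x)$, where moreover $\varphi(s)$ acquires poles and $Q(x)\ne0$. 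Carrying all of this out carefully is the crux, and is presumably why these statements are stated here only as conjectures.
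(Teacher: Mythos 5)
The statement you are addressing is left as a \emph{conjecture} in the paper: the authors prove the $\Omega_{\pm}$ result only for each individual sum $\mathbb{D}^*_{\chi_1,\chi_2}(x)$ (Theorem \ref{ct}) and then remark that, since $\mathbb{S}\mathbb{S}(x)$ is a linear combination of these via \eqref{v1}, they cannot appeal directly to Theorem \ref{omega1}. So there is no proof in the paper to compare against, and your proposal makes a genuinely different move: you apply Chandrasekharan--Narasimhan to the Dirichlet series of $\mathbb{S}\mathbb{S}(x)$ itself. The key observation, which the paper does not exploit, is that every summand in \eqref{v1} satisfies the functional equation displayed in the proof of Theorem \ref{ct} with the \emph{same} gamma factor $\Gamma^2(s/2)$, the same $\delta=3$, and the same $\lambda_n=\mu_n=\pi n/\sqrt{pq}$, so the linear combination again satisfies \eqref{fe}, now with dual coefficients which, by Lemma \ref{26} and $\tau(\chi)\tau(\overline{\chi})=-p$ for odd $\chi$, collapse to $b(n)=\tfrac{\sqrt{pq}}{4}\,n\sum_{d\mid n}f_p(d)f_q(n/d)$, with $f_p(d)\in\{0,\pm1\}$ exactly as in your proposal. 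I checked the parameters ($\theta=5/4$, exponent $7/4$ in \eqref{omega2}, and $Q(x)=0$ because $L(-1,\chi)=0$ for odd $\chi$), and they are correct.

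The two hypotheses you flag as delicate do appear to be verifiable, and the case analysis is shorter than you fear. If $a\equiv\pm1\pmod p$ and $b\equiv\pm1\pmod q$, take $n_k=P_k$ prime with $P_k\equiv1\pmod{pq}$, so the divisors $d=1$ and $d=P_k$ contribute with equal sign and the divisor sum is $\pm2$; if exactly one of these congruences holds, primes in a single residue class mod $pq$ still give a nonzero (single-term) divisor sum; and in the remaining case your choice $n_k=QP_k$ isolates the divisor $d=P_k$ (note $a\not\equiv\pm1\pmod p$ already forces $p\geq5$, so a residue class for $Q$ avoiding $0,\pm a$ mod $p$ exists). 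In every case $\bigl|\sum_{d\mid n_k}f_p(d)f_q(n_k/d)\bigr|\geq1$, hence $|b(n_k)|\gg_{p,q}n_k$ and \eqref{omega2} diverges over primes in a fixed progression, while the $\pm1$-independence condition follows, as you argue, from the linear independence over $\mathbb{Q}$ of $1$ and the square roots of distinct squarefree integers. The one point I cannot certify from the paper alone is whether Theorem 3.1 of \cite{annals2} carries hypotheses beyond those reproduced in Theorem \ref{omega1}; granting that it does not, your outline, written out with the case analysis above, would upgrade Conjecture \ref{3tttt} (and, by the same scheme, Conjecture \ref{3ttttt}) to a theorem rather than merely rederive anything in the paper. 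That is worth carrying out in full.
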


We now use Chandrasekharan and Narasimhan's  Theorem \ref{bigO} to obtain an upper bound for ${\mathbb{D}}^*_{\chi_1,\chi_2}(x)$. From \eqref{3a}, $A=1$ and $\delta=3$. Observe that $|a(n)|\leq nd(n)$. Furthermore, for some constant $C>0$, we also see that $|b(n)|\leq Cnd(n)$. Thus, for each $\epsilon >0$, we shall take $\beta=2+\epsilon$.  Hence, from \eqref{bigO3},
\begin{equation*}\label{3c}
u=2+\epsilon-\frac32-\frac14=\frac14+\epsilon.
\end{equation*}
With a reference to \eqref{bigO1}, we need to calculate, for $\eta\geq0$,
\begin{equation}\label{3d}
\df{1}{2}\delta-\df{1}{4A}+2Au\eta =\dfrac{3}{2}-\df14+2\left(\df14+\epsilon\right)\eta=\df54+\left(\df12+2\epsilon\right)\eta.
\end{equation}
For the second power in \eqref{bigO1}, we use partial summation and \eqref{d(n)bigO} to deduce that
\begin{equation}\label{3e}
\sum_{x< \mu_n\leq x+O(x^{1/2-\eta})}nd(n) =O(x^{3/2-\eta}\log x).
\end{equation}
From \eqref{3d} and \eqref{3e}, we seek the optimal power of $x$ by solving
\begin{equation*}
\df54+\left(\df12+2\epsilon\right)\eta=\df32-\eta+\epsilon.
\end{equation*}
Solving this simple equation, we see that $\eta=\tf16+\epsilon$. Therefore, we have established the following theorem.

\begin{theorem}\label{3ttt}
As $x\to \infty$, for every $\epsilon>0$,
\begin{equation*}\label{3f}
{\mathbb{D}}^*_{\chi_1,\chi_2}(x)=O(x^{4/3+\epsilon}),
\end{equation*}
where ${\mathbb{D}}^*_{\chi_1,\chi_2}(x)$ is defined by \eqref{D^*}.
\end{theorem}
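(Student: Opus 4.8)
The plan is to apply Theorem~\ref{bigO} of Chandrasekharan and Narasimhan directly, starting from the Dirichlet series and functional equation already identified in the proof of Theorem~\ref{ct}. Thus I would take $\varphi$ to be (up to the harmless rescaling $\lambda_n=\pi n/\sqrt{pq}$) the product $L(s-1,\chi_1)L(s-1,\chi_2)=\sum_{n\ge1}nd_{\chi_1,\chi_2}(n)n^{-s}$, together with the parameters $N=2$, $\delta=3$, $A=1$ recorded in \eqref{3a} and $a(n)=nd_{\chi_1,\chi_2}(n)$, $b(n)=-\tau(\chi_1)\tau(\chi_2)nd_{\overline{\chi}_1,\overline{\chi}_2}(n)/\sqrt{pq}$. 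The hypothesis of Theorem~\ref{bigO} that $\varphi$ be entire holds because $\chi_1$ and $\chi_2$ are non-principal, so each $L$-function is entire; consequently $Q(x)\equiv0$ and the polar term in \eqref{bigO1} contributes nothing.

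Next I would supply the two remaining inputs. Since $|b(n)|\le C\,nd(n)$ for a constant $C=C(p,q)>0$ and $\sum_n nd(n)n^{-\beta}$ converges for $\Realp\beta>2$, the choice $\beta=2+\epsilon$ is admissible, and \eqref{bigO3} then gives $u=\beta-\tfrac12\delta-\tfrac{1}{4A}=\tfrac14+\epsilon$, so the first error exponent in \eqref{bigO1} is $\tfrac12\delta-\tfrac{1}{4A}+2Au\eta=\tfrac54+(\tfrac12+2\epsilon)\eta$. For the second error term, $|a(n)|\le nd(n)$ combined with partial summation and Vorono\"{\dotlessi}'s bound \eqref{d(n)bigO} gives $\sum_{x<\mu_n\le x+O(x^{1/2-\eta})}nd(n)=O(x^{3/2-\eta}\log x)$, as in \eqref{3e} (note $x'=x+O(x^{1-\eta-1/(2A)})=x+O(x^{1/2-\eta})$ since $A=1$). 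Hence \eqref{bigO1} reads $\mathbb{D}^*_{\chi_1,\chi_2}(x)=O\big(x^{5/4+(1/2+2\epsilon)\eta}\big)+O\big(x^{3/2-\eta}\log x\big)$ for every $\eta\ge0$; optimizing by equating the two exponents, $\tfrac54+(\tfrac12+2\epsilon)\eta=\tfrac32-\eta+\epsilon$, yields $\eta=\tfrac16+\epsilon$, and substituting back produces exponent $\tfrac43+\epsilon$. Absorbing the $\log x$ into the $\epsilon$ and renaming, this is the claimed bound $\mathbb{D}^*_{\chi_1,\chi_2}(x)=O(x^{4/3+\epsilon})$.

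There is no single hard obstacle here: everything reduces to bookkeeping once Theorem~\ref{bigO} is in hand. The one point deserving care is that the binding constraint on the final exponent is the short-interval divisor sum on the scale $x^{1/2-\eta}$ forced by \eqref{bigO2}, rather than the ``smooth'' contribution; any improvement would require a better short-interval or mean-square estimate for $\sum nd(n)$, exactly as in the classical Dirichlet divisor problem. A secondary check is that the extra weight $n$ (which, as noted, was inadvertently omitted in \cite{bessel2bbskaz}) is carried consistently through both $a(n)$ and $b(n)$, shifting $\delta$ from $1$ to $3$ and the admissible $\beta$ from $1+\epsilon$ to $2+\epsilon$ relative to the proof of Theorem~\ref{O1}; with those substitutions the optimization is identical in form to the one carried out there.
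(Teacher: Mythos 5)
Your proposal is correct and follows essentially the same route as the paper: both apply Theorem~\ref{bigO} with the parameters $A=1$, $\delta=3$, $\beta=2+\epsilon$ from the functional equation used for Theorem~\ref{ct}, bound the short-interval sum via $|a(n)|\le nd(n)$, partial summation, and \eqref{d(n)bigO}, and optimize $\eta=\tfrac16+\epsilon$ to obtain the exponent $\tfrac43+\epsilon$. No substantive differences.
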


Using \eqref{v1} and Theorem \ref{3ttt}, we can immediately  deduce the following theorem.

\begin{theorem}\label{3tt}
As $x\to \infty$, for every $\epsilon>0$,
\begin{equation*}
\mathbb{S}\mathbb{S}(x)=O(x^{4/3+\epsilon}).
\end{equation*}
\end{theorem}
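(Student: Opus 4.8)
The plan is to deduce Theorem \ref{3tt} directly from Theorem \ref{3ttt} via the decomposition \eqref{v1}. Recall that \eqref{v1} expresses $\mathbb{S}\mathbb{S}(x)$ as a finite linear combination
\[
\mathbb{S}\mathbb{S}(x)=-\frac{1}{\phi(p)\phi(q)}\sum_{\substack{\chi_1\bmod p\\ \chi_1\text{ odd}}}\sum_{\substack{\chi_2\bmod q\\ \chi_2\text{ odd}}}\chi_1(a)\chi_2(b)\tau(\overline{\chi}_1)\tau(\overline{\chi}_2)\,\mathbb{D}^*_{\chi_1,\chi_2}(x),
\]
where the sums over $\chi_1$ and $\chi_2$ are finite (there are at most $\phi(p)-1$ and $\phi(q)-1$ terms, respectively), and the coefficients $\chi_1(a)\chi_2(b)\tau(\overline{\chi}_1)\tau(\overline{\chi}_2)$ are constants independent of $x$, bounded in absolute value by $\sqrt{pq}$. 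Since $p$ and $q$ are fixed primes, the number of terms and the size of the coefficients are all $O(1)$ as $x\to\infty$.

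First I would invoke Theorem \ref{3ttt}, which gives, for every $\epsilon>0$, the estimate $\mathbb{D}^*_{\chi_1,\chi_2}(x)=O(x^{4/3+\epsilon})$ as $x\to\infty$, valid for each pair of odd primitive characters $\chi_1\bmod p$, $\chi_2\bmod q$. Next I would apply the triangle inequality to \eqref{v1}: taking absolute values term by term,
\[
|\mathbb{S}\mathbb{S}(x)|\leq \frac{1}{\phi(p)\phi(q)}\sum_{\substack{\chi_1\bmod p\\ \chi_1\text{ odd}}}\sum_{\substack{\chi_2\bmod q\\ \chi_2\text{ odd}}}|\chi_1(a)||\chi_2(b)||\tau(\overline{\chi}_1)||\tau(\overline{\chi}_2)|\,|\mathbb{D}^*_{\chi_1,\chi_2}(x)|.
\]
Since $|\tau(\overline{\chi}_i)|=\sqrt{p}$ or $\sqrt{q}$ for primitive characters, $|\chi_i|\leq 1$, and the double sum has finitely many terms, each bounded by $O(x^{4/3+\epsilon})$, the whole right-hand side is a finite sum of terms each $O(x^{4/3+\epsilon})$, hence $O(x^{4/3+\epsilon})$. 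Adjusting $\epsilon$ if necessary (it may be absorbed since finitely many error terms are combined), we conclude $\mathbb{S}\mathbb{S}(x)=O(x^{4/3+\epsilon})$.

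This argument is entirely routine and presents no genuine obstacle; the only point requiring any care is ensuring that the Gauss sums $\tau(\overline{\chi}_i)$ attached to the primitive characters are indeed bounded (they have modulus $\sqrt{p}$ or $\sqrt{q}$), so that the coefficients in \eqref{v1} do not grow with $x$ — but since $p,q$ are fixed this is immediate. No further analytic input beyond Theorem \ref{3ttt} is needed.

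\begin{proof}
By \eqref{v1}, $\mathbb{S}\mathbb{S}(x)$ is a finite linear combination, with coefficients independent of $x$, of the sums $\mathbb{D}^*_{\chi_1,\chi_2}(x)$ over odd primitive characters $\chi_1\bmod p$ and $\chi_2\bmod q$. The number of such pairs is finite (it does not exceed $(\phi(p)-1)(\phi(q)-1)$), and for each pair the coefficient $-\frac{1}{\phi(p)\phi(q)}\chi_1(a)\chi_2(b)\tau(\overline{\chi}_1)\tau(\overline{\chi}_2)$ is bounded in absolute value, since $|\chi_i|\leq 1$ and $|\tau(\overline{\chi}_1)\tau(\overline{\chi}_2)|=\sqrt{pq}$ for primitive characters. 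By Theorem \ref{3ttt}, each $\mathbb{D}^*_{\chi_1,\chi_2}(x)=O(x^{4/3+\epsilon})$ as $x\to\infty$, for every $\epsilon>0$. Summing the finitely many contributions and applying the triangle inequality, we obtain $\mathbb{S}\mathbb{S}(x)=O(x^{4/3+\epsilon})$, which completes the proof.
\end{proof}
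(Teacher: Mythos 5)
Your proposal is correct and follows exactly the paper's (one-line) argument: the paper likewise deduces Theorem \ref{3tt} immediately from \eqref{v1} and Theorem \ref{3ttt}, and your write-up simply makes explicit the routine bookkeeping (finitely many characters, bounded Gauss-sum coefficients, triangle inequality) that the paper leaves implicit.
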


We conclude this section with a special case. Let $\tf{a}{p}=\tf{b}{q}=\tf14$. Then
$$\sin(2\pi n/4)=\begin{cases}(-1)^{(n-1)/2}, \quad &n \text{ odd},\\
0, &n \text{ even},
\end{cases}
$$
and
\begin{align*}\label{6}
 \mathbb{S}\mathbb{S}(\tf14,\tf14,x):=&-\sum_{\substack{mn\leq x\\m,n \text{ odd}}}mn(-1)^{(m+n)/2}\notag\\
 =&\sum_{(2j+1)(2k+1)\leq x}(-1)^{j+k}(2j+1)(2k+1),
 \end{align*}
 where we set $m=2j+1$, $n=2k+1$. This is a rather interesting lattice point problem.  We are counting lattice points under the hyperbola $ab\leq x$, but we require both coordinates to be odd, and we put a weight on them.

 We restate Conjecture \ref{3tttt} and Theorem \ref{3tt} in this particular case $\tf{a}{p}=\tf{b}{q}=\tf14$.

\begin{conjecture}If $\mathbb{S}\mathbb{S}(x)$ is defined by \eqref{ss}, then
\begin{align*}
\varlimsup_{x\to\infty}\df{ \mathbb{S}\mathbb{S}(\tf14, \tf14,x) }{x^{5/4}}&=+\infty, \\
\varliminf_{x\to\infty}\df{ \mathbb{S}\mathbb{S}(\tf14,\tf14,x)}{x^{5/4}}&=-\infty. 
\end{align*}
\end{conjecture}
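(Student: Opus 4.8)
The plan is to recognize this as, in essence, the $p=q=4$ instance of Theorem~\ref{ct}, notwithstanding that $4$ is not prime. First, observe that $\sin(2\pi n/4)=\chi_4(n)$, where $\chi_4$ denotes the unique non-principal character modulo $4$ (which is primitive and odd). Hence, grouping the summation in \eqref{ss} according to the value of $N=mn$,
$$\mathbb{S}\mathbb{S}(\tf14,\tf14,x)={\sum_{N\leq x}}^{\prime}N\sum_{mn=N}\chi_4(m)\chi_4(n)={\mathbb{D}}^*_{\chi_4,\chi_4}(x),$$
with ${\mathbb{D}}^*$ as in \eqref{D^*}; equivalently, in \eqref{v1} the double character sum collapses to the single term $\chi_1=\chi_2=\chi_4$, and since $\tau(\chi_4)=2i$ the relevant constant $-\tf14\tau(\chi_4)^2$ equals $1$, giving the same conclusion. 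The key consequence is that, unlike in the general situation motivating Conjecture~\ref{3tttt}, there is now no linear combination of several $\Omega$-sums that must be reconciled, so the obstruction described just before that conjecture does not arise here.

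Next I would run the proof of Theorem~\ref{ct} verbatim with $p=q=4$. The relevant Dirichlet series is $\varphi(s)=(4/\pi)^s L(s-1,\chi_4)^2$, which is entire because $\chi_4$ is non-principal, and $\varphi(0)=L(-1,\chi_4)^2=0$ since $-1$ is a trivial zero of $L(s,\chi_4)$ ($\chi_4$ being odd); therefore $Q(x)=0$. The functional equation needed is the $p=q=4$ case of the one quoted from \cite{bessel2bbskaz} in the proof of Theorem~\ref{ct}, and the parameters are exactly as in \eqref{3a}: $N=2$, $\delta=3$, $A=1$, and $\theta=\tf54$ by \eqref{omega3}, with $a(n)=nd_{\chi_4,\chi_4}(n)$, $b(n)=-\tf14\,\tau(\chi_4)^2\,nd_{\chi_4,\chi_4}(n)=nd_{\chi_4,\chi_4}(n)$, and $\lambda_n=\mu_n=\pi n/4$.

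It then remains to check the two hypotheses of Theorem~\ref{omega1}. For the representation condition, note that $\mu_n^{1/(2A)}=\tf12\sqrt{\pi n}$, so it suffices to produce a subset $\{n_k\}$ of the positive integers such that no $\sqrt{m}$ is a $\pm1$-combination of the $\sqrt{n_k}$ unless it equals some $\sqrt{n_r}$; taking $\{n_k\}$ to be the primes $\equiv1\pmod{4}$ works, by the $\mathbb{Q}$-linear independence of the square roots of distinct primes, exactly as in the proof of Theorem~\ref{at}. For \eqref{omega2} with $\rho=0$, $A=1$, $\delta=3$, the exponent is $(A\delta+\rho+\tf12)/(2A)=\tf74$; since $\tau(\chi_4)^2=-4$ is real and $d_{\chi_4,\chi_4}(P_k)=2\chi_4(P_k)=2$ for each prime $P_k\equiv1\pmod{4}$, the series in \eqref{omega2} is a positive constant times $\sum_k P_k^{-3/4}$, which diverges by Dirichlet's theorem. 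Theorem~\ref{omega1} then yields \eqref{aa11} and \eqref{ab11} for ${\mathbb{D}}^*_{\chi_4,\chi_4}(x)$, hence the asserted $\Omega_{\pm}$ bounds for $\mathbb{S}\mathbb{S}(\tf14,\tf14,x)$; the variant with $\Realp$ replaced by $\Imp$ follows in the same way (though here everything is in fact real).

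The step that I expect to require the most care---rather than being a genuine obstacle---is justifying that Theorem~\ref{ct}, which is stated for prime moduli, applies with $p=q=4$: the primality of $p,q$ entered its proof only through Dirichlet's theorem on primes in the progression $1+4m$ when verifying \eqref{omega2}, and that step survives intact because $\chi_4$ is primitive and odd. One should also confirm that nothing in Definition~\ref{def} (the domain $\mathfrak{D}$, the function $\chi$, the exponents $\alpha_n$) is disturbed by allowing $\chi_1=\chi_2$, which it is not, since $L(s,\chi_4)^2$ has the same analytic features as the product of two distinct odd $L$-functions.
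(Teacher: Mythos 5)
Your proposal is not a rederivation of anything in the paper: the statement you were asked about is stated there as a \emph{conjecture}, with no proof offered, precisely because $\mathbb{S}\mathbb{S}(x)$ is in general a linear combination \eqref{v1} of several sums ${\mathbb{D}}^*_{\chi_1,\chi_2}(x)$, to each of which Theorem \ref{omega1} applies individually but not to their sum. Your key observation --- that for $\tf{a}{p}=\tf{b}{q}=\tf14$ the modulus is $4$, there is exactly one odd character $\chi_4$ modulo $4$, $\sin(2\pi n/4)=\chi_4(n)$ identically, and hence $\mathbb{S}\mathbb{S}(\tf14,\tf14,x)={\mathbb{D}}^*_{\chi_4,\chi_4}(x)$ with no linear combination to reconcile --- removes that obstruction entirely, and I find your verification of the hypotheses of Theorem \ref{omega1} correct: the functional equation of $L(s,\chi_4)$ needs only primitivity, not primality of the modulus; $\delta=3$, $A=1$, $\theta=\tf54$ by \eqref{omega3}; $Q(x)=0$ since $L(-1,\chi_4)=0$ for odd $\chi_4$; $\tau(\chi_4)^2=-4$ gives $b(n)=nd_{\chi_4,\chi_4}(n)$; and \eqref{omega2} with exponent $\tf74$ reduces to $\sum_k P_k^{-3/4}$ over primes $P_k\equiv1\pmod4$, which diverges. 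So you have in fact upgraded this conjecture to a theorem, which is more than the paper achieves. Two small caveats: first, your closing remark that ``the variant with $\Realp$ replaced by $\Imp$ follows in the same way'' is not right as stated, since $b(n)$ is real and the $\Imp$-version of \eqref{omega2} fails --- but this is harmless because the conjectured quantity is itself real and only the $\Realp$-version is needed; second, one should note (as you implicitly do) that $A_0(x)=\sum_{\lambda_n\le x}'a(n)$ with $\lambda_n=\pi n/4$ is ${\mathbb{D}}^*_{\chi_4,\chi_4}(4x/\pi)$, so the $\Omega_{\pm}(x^{5/4})$ conclusion transfers to ${\mathbb{D}}^*_{\chi_4,\chi_4}(y)$ only after the trivial constant rescaling $y=4x/\pi$, which of course does not affect the power $5/4$.
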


\begin{theorem}
As $x\to \infty$, for every $\epsilon>0$,
\begin{equation*}
\mathbb{S}\mathbb{S}(\tf14,\tf14,x)=O(x^{4/3+\epsilon}).
\end{equation*}
\end{theorem}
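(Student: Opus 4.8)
The plan is to reduce this to the estimate already obtained in Theorem~\ref{3ttt}, the only subtlety being that the relevant moduli here are $p=q=4$, which is not prime, so Theorem~\ref{3tt} does not literally apply. First I would record the arithmetic fact that $\sin(2\pi n/4)=\sin(\pi n/2)$ equals $1,0,-1,0$ according as $n\equiv 1,2,3,0\pmod 4$; that is, $\sin(2\pi n/4)=\chi_4(n)$, where $\chi_4$ denotes the unique non-principal (primitive, odd) character modulo $4$. Since $\chi_4$ is completely multiplicative and real, for each $k$ the factorizations $mn=k$ contribute $\sum_{mn=k}mn\,\chi_4(m)\chi_4(n)=k\,\chi_4(k)d(k)=k\,d_{\chi_4,\chi_4}(k)$ to $\mathbb{S}\mathbb{S}(\tf14,\tf14,x)$, so that
\begin{equation*}
\mathbb{S}\mathbb{S}(\tf14,\tf14,x)={\sum_{k\leq x}}^{\prime}k\,d_{\chi_4,\chi_4}(k)=\mathbb{D}^*_{\chi_4,\chi_4}(x),
\end{equation*}
which is exactly the sum \eqref{D^*} appearing in Theorem~\ref{3ttt}, now with the pair of primitive odd characters $\chi_1=\chi_2=\chi_4$.

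Next I would observe that the hypothesis that $p$ and $q$ be prime is used in Section~\ref{third} only to verify condition \eqref{omega2} of the $\Omega$-theorem (Theorem~\ref{ct}), via Dirichlet's theorem on primes in arithmetic progressions; it plays no role in the ``big O'' half of the story. The functional equation used in the proof of Theorem~\ref{ct}, relating $\pi^{-s}(pq)^{s/2}\Gamma^2(\tf12 s)L(s-1,\chi_1)L(s-1,\chi_2)$ to its reflection at $s\mapsto 3-s$, holds for any pair of primitive odd characters; specializing to $\chi_1=\chi_2=\chi_4$ (so that $\overline{\chi_4}=\chi_4$, $\tau(\chi_4)^2=-4$, $\sqrt{pq}=4$) yields a functional equation of the same shape, with parameters $N=2$, $\delta=3$, $A=1$, $a(n)=n\,d_{\chi_4,\chi_4}(n)$, $b(n)=n\,d_{\chi_4,\chi_4}(n)$, and $\lambda_n=\mu_n=\pi n/4$. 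Since $\varphi(s)$ is, up to a harmless entire factor, equal to $L(s-1,\chi_4)^2$, it is entire, so Theorem~\ref{bigO} applies and the associated $Q(x)$ is $O(1)$, the only singularity of $\Gamma(s)\varphi(s)/\Gamma(s+1)=\varphi(s)/s$ being the simple pole at $s=0$.

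With the setup in place, I would rerun the optimization from the proof of Theorem~\ref{3ttt} verbatim: take $\beta=2+\epsilon$, so that $\sum_n|b(n)|\mu_n^{-\beta}$ converges because $|d_{\chi_4,\chi_4}(n)|\leq d(n)$; then \eqref{bigO3} gives $u=\tf14+\epsilon$, making the first exponent in \eqref{bigO1} equal to $\tf54+\bigl(\tf12+2\epsilon\bigr)\eta$; the second term in \eqref{bigO1} is $\sum_{x<\mu_n\leq x+O(x^{1/2-\eta})}n\,d(n)=O(x^{3/2-\eta}\log x)$ by partial summation and \eqref{d(n)bigO}; and balancing $\tf54+\bigl(\tf12+2\epsilon\bigr)\eta=\tf32-\eta+\epsilon$ gives $\eta=\tf16+\epsilon$, hence the bound $O(x^{4/3+\epsilon})$ (the rescaling $x\mapsto\tf{\pi}{4}x$ forced by $\lambda_n=\pi n/4$ costing nothing). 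The main obstacle is not analytic but a matter of bookkeeping: one must check carefully that, with $p=q=4$, the Dirichlet series $\varphi$ really is entire and that its functional equation genuinely carries the parameters $\delta=3$, $A=1$ claimed above, so that Theorem~\ref{bigO} is legitimately applicable. As an alternative that avoids the $\rho>0$ considerations entirely, one may note that $\sum_k\chi_4(k)d(k)k^{-s}=L(s,\chi_4)^2$ is entire, so the $\rho=0$ case of Theorem~\ref{bigO} gives ${\sum_{k\leq x}}^{\prime}\chi_4(k)d(k)=O(x^{1/3+\epsilon})$, and partial summation then yields $\mathbb{D}^*_{\chi_4,\chi_4}(x)={\sum_{k\leq x}}^{\prime}k\,\chi_4(k)d(k)=O(x^{4/3+\epsilon})$.
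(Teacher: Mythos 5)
Your argument is correct and, in substance, it is the paper's: the paper obtains this theorem simply by restating Theorem \ref{3tt} in the case $\tf{a}{p}=\tf{b}{q}=\tf14$, i.e.\ by applying the bound of Theorem \ref{3ttt} to the single term that survives in \eqref{v1} when the only odd primitive character modulo $4$ is $\chi_4$; your identity $\mathbb{S}\mathbb{S}(\tf14,\tf14,x)={\sum_{k\leq x}}^{\prime}k\,d_{\chi_4,\chi_4}(k)=\mathbb{D}^*_{\chi_4,\chi_4}(x)$ followed by the rerun of the $\eta=\tf16$ optimization is exactly that computation. You do supply something the paper glosses over: Section \ref{third} assumes $p$ and $q$ prime, whereas here $p=q=4$, and you correctly isolate that primality enters only through Dirichlet's theorem in verifying \eqref{omega2} for the $\Omega$-half, not in the big-$O$ estimate, which needs only that $\chi_4$ be primitive and odd. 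One small correction: since $\chi_4$ is odd, $L(-1,\chi_4)=0$ is a trivial zero, so $\varphi(s)$ vanishes to second order at $s=0$ and there is no pole of $\varphi(s)/s$ there at all; thus $Q(x)\equiv 0$ rather than merely $O(1)$ --- harmless either way. Your closing alternative, namely ${\sum_{k\leq x}}^{\prime}\chi_4(k)d(k)=O(x^{1/3+\epsilon})$ followed by partial summation, is a clean independent shortcut that sidesteps the $\rho>0$ bookkeeping entirely.
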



\section{Sums with a Product of an Arbitrary Number of $\sin$'s}\label{fourth} 
We begin this section with a definition. Let $J_{\nu}(x)$ denote the ordinary Bessel function of order $\nu$.
Define
\begin{align}\label{main}
K_{\nu}(x;\mu;m):=&\int_0^{\infty}u_{m-1}^{\nu-\mu-1}J_{\mu}(u_{m-1})du_{m-1}
\int_0^{\infty}u_{m-2}^{\nu-\mu-1}J_{\mu}(u_{m-2})du_{m-2}\notag \\
&\cdots\cdots\int_0^{\infty}u_{1}^{\nu-\mu-1}J_{\mu}(u_1)J_{\nu}(x/u_1u_2\cdots u_{m-1})du_{1},
\end{align}
provided that $\mu,\nu>-3/2$, so that the integral converges.

In the sequel, we apply Theorems 2 and 4 of
\cite[pp.~351, 356]{identitiesI}, which, for convenience, we offer below.

\begin{theorem}\label{identities351} Let $\varphi(s)$ and $\psi(s)$ satisfy the functional equation \eqref{Delta} with $\Delta(s)=\Gamma^k(s)$. If $k\geq2$, suppose that $\delta>-\tf12$. Assume that $\rho>2k\sigma_a^{*}-k\delta-\tf12$, where $\sigma_a^{*}$ is the abscissa of absolute convergence of $\psi(s)$.  If $x>0$, then
\begin{gather*}\label{identities351a}
\df{1}{\Gamma(\rho+1)}{\sum_{\lambda_n\leq x}}^{\prime}a(n)(x-\lambda_n)^{\rho}\notag\\
=2^{\rho(1-k)}\sum_{n=1}^{\infty}b(n)\left(\df{x}{\mu_n}\right)^{(\delta+\rho)/2}K_{\delta+\rho}(2^k(\mu_n x)^{1/2};\delta-1;k)+Q_{\rho}(x),
\end{gather*}
where $Q_{\rho}(x)$ is defined in \eqref{poles}.
\end{theorem}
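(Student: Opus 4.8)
The plan is to derive the identity from the Mellin--Barnes (Perron-type) representation of the Riesz mean on the left, a shift of the line of integration that produces the term $Q_{\rho}(x)$, and a Mellin-transform computation that identifies the iterated Bessel integral $K_{\delta+\rho}(\,\cdot\,;\delta-1;k)$ of \eqref{main}. First I would record, for $c$ exceeding the abscissa of absolute convergence $\sigma_a$ of $\varphi$,
\begin{equation*}
\df{1}{\Gamma(\rho+1)}{\sum_{\lambda_n\leq x}}^{\prime}a(n)(x-\lambda_n)^{\rho}
=\df{1}{2\pi i}\int_{(c)}\df{\Gamma(s)}{\Gamma(s+\rho+1)}\,\varphi(s)\,x^{s+\rho}\,ds,
\end{equation*}
which follows from the classical Mellin pair for the truncated power $(1-u)_{+}^{\rho}/\Gamma(\rho+1)$ together with termwise summation, legitimate because $\varphi$ converges absolutely on the line $\Re s=c$.

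Next I would move the contour to a vertical line $\Re s=c'$ with $c'<-\rho-1-k$, crossing exactly the singularities enclosed by the curve $\mathcal{C}$ of \eqref{poles}; by the residue theorem the contribution of those poles is precisely $Q_{\rho}(x)$. Justifying this shift is the first place where real work is required: one must show $\varphi$ has at most polynomial growth in vertical strips (via the functional equation $\Gamma^{k}(s)\varphi(s)=\Gamma^{k}(\delta-s)\psi(\delta-s)$, absolute convergence of $\psi$, Stirling's formula, and a Phragm\'{e}n--Lindel\"{o}f argument), and that the horizontal segments joining $\Re s=c$ to $\Re s=c'$ contribute nothing in the limit. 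The hypothesis $\rho>2k\sigma_a^{*}-k\delta-\tf12$ is exactly what guarantees that the integral on $\Re s=c'$ converges absolutely and that these horizontal pieces are negligible.

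On the line $\Re s=c'$ I would substitute $\varphi(s)=\Gamma^{-k}(s)\Gamma^{k}(\delta-s)\psi(\delta-s)$, expand $\psi(\delta-s)=\sum_{n}b(n)\mu_n^{s-\delta}$ (absolutely convergent there since $\delta-c'>\sigma_a^{*}$), and interchange summation with integration, so the problem reduces to evaluating, for each fixed $n$,
\begin{equation*}
\df{1}{2\pi i}\int_{(c')}\df{\Gamma(s)\,\Gamma^{k}(\delta-s)}{\Gamma(s+\rho+1)\,\Gamma^{k}(s)}\,x^{s+\rho}\,\mu_n^{s-\delta}\,ds.
\end{equation*}
The key identity to establish is that this Gamma-quotient is, up to an explicit power of $2$, the Mellin transform in $x$ of $x^{(\delta+\rho)/2}K_{\delta+\rho}(2^{k}(\mu_n x)^{1/2};\delta-1;k)$, with $K_{\nu}$ as in \eqref{main}. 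To see this I would expand $K_{\nu}$ through its defining iterated integral and treat each of the $k-1$ nested integrations $g\mapsto\int_{0}^{\infty}u^{\nu-\mu-1}J_{\mu}(u)g(\,\cdot/u)\,du$ as a Mellin convolution; together with the classical formula $\int_{0}^{\infty}u^{z-1}J_{\mu}(u)\,du=2^{z-1}\Gamma(\tf12(\mu+z))/\Gamma(\tf12(\mu-z)+1)$ (valid on a strip, nonempty because $\mu,\nu>-\tf32$, and $\delta>-\tf12$ when $k\geq2$) and the substitution turning the argument of the innermost $J_{\delta+\rho}$ into $2^{k}(\mu_n x)^{1/2}$ --- which, being proportional to $\sqrt{x}$, converts the half-argument Gammas into full-argument ones --- this produces exactly the factors $\Gamma(s)$, $\Gamma^{-1}(s+\rho+1)$, $\Gamma^{k}(\delta-s)$, $\Gamma^{-k}(s)$, together with the constant $2^{\rho(1-k)}$. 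Summing over $n$ then yields the asserted identity.

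I expect the main obstacle to be twofold. First, the analytic bookkeeping of the contour shift and of the interchange of $\sum_{n}$ with $\int$: this rests on the polynomial bound for $\varphi$ in vertical strips and, for the final step, on absolute convergence of the Bessel series on the right, which in turn follows from the large-argument asymptotics of $K_{\delta+\rho}(\,\cdot\,;\delta-1;k)$ combined with the lower bound imposed on $\rho$. Second, pinning down the exact constants --- the factor $2^{\rho(1-k)}$ and the argument $2^{k}(\mu_n x)^{1/2}$ --- in the Mellin-transform identity for $K_{\delta+\rho}(\,\cdot\,;\delta-1;k)$, which demands careful tracking of the powers of $2$ through the $k-1$ iterated convolutions and the change of variables. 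Once these two points are settled, the theorem follows by term-by-term Mellin inversion.
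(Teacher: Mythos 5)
This theorem is not proved in the paper at all: it is quoted verbatim from Berndt's earlier work \cite{identitiesI} (Theorem 2 there), and the present authors only apply it. Your outline --- the Perron/Mellin--Barnes representation of the Riesz mean, the contour shift whose residues give $Q_{\rho}(x)$, the substitution of the functional equation and termwise integration, and the identification of the Gamma-quotient as the Mellin transform of the iterated Bessel kernel $K_{\delta+\rho}(\,\cdot\,;\delta-1;k)$ --- is precisely the Chandrasekharan--Narasimhan-style argument used in that source, and you have correctly located where the hypotheses $\delta>-\tf12$ (for convergence of the iterated integrals, since $\mu=\delta-1>-\tf32$) and $\rho>2k\sigma_a^{*}-k\delta-\tf12$ (for absolute convergence of the Bessel series and the legitimacy of the interchanges) enter. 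I see no gap in the plan beyond the constant-tracking you already flag.
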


The following Theorem \ref{identities356} is an extension of Theorem \ref{identities351}.  In our application, the hypotheses are readily verified.

\begin{theorem}\label{identities356} Suppose that for $\sigma>{\sigma_a}^*$,
\begin{equation*}
\sup_{0\leq h\leq 1}\left|\sum_{r^{2k}\leq \mu_n\leq(r+h)^{2k}}b(n)\mu_n^{\sigma -1/(2k)}\right|=o(1),
\end{equation*}
as $r\to\infty$. Then \eqref{identities351a} is valid for $q>2k\sigma_a^*-k\delta-\tf32$ and for those positive values of $x$ such that the left-hand side of \eqref{identities351a} is defined. The series on the right-hand side of \eqref{identities351a} converges uniformly on any interval for $x>0$ where the left-hand side is continuous.  The convergence is bounded on any interval $0<x_1\leq x\leq x_2$ when $\rho=0$.
\end{theorem}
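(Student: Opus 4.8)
The plan is to deduce Theorem~\ref{identities356} from Theorem~\ref{identities351} by an integration (order-lowering) argument. Write $A_\rho(x)$ for the left-hand side of \eqref{identities351a}, i.e.\ the Riesz mean \eqref{omega6}, and put
$$S_\rho(x):=2^{\rho(1-k)}\sum_{n=1}^{\infty}b(n)\left(\df{x}{\mu_n}\right)^{(\delta+\rho)/2}K_{\delta+\rho}\!\left(2^k(\mu_n x)^{1/2};\delta-1;k\right),$$
so that Theorem~\ref{identities351} asserts $A_\rho(x)=S_\rho(x)+Q_\rho(x)$ for $\rho>2k\sigma_a^*-k\delta-\tf12$, with $Q_\rho$ given by \eqref{poles}. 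Now fix $\rho$ in the wider range $2k\sigma_a^*-k\delta-\tf32<\rho\le 2k\sigma_a^*-k\delta-\tf12$ and set $\rho':=\rho+1$; then $\rho'$ lies in the range covered by Theorem~\ref{identities351}, so $A_{\rho'}(x)=S_{\rho'}(x)+Q_{\rho'}(x)$ holds for every $x>0$.

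The first step, and the one I expect to be the main obstacle, is to show — using precisely the new hypothesis — that the candidate series $S_\rho(x)$ converges uniformly on any interval on which $A_\rho$ is continuous, and with uniformly bounded partial sums on any interval $0<x_1\le x\le x_2$ when $\rho=0$. To this end one inserts into each term the large-argument asymptotics of the iterated kernel $K_{\delta+\rho}(\,\cdot\,;\delta-1;k)$ from \cite{identitiesI} (the analogue, for $\Delta(s)=\Gamma^k(s)$, of the expansions of $Y_\nu$ and $K_\nu$ recalled above): its leading part is an oscillatory term, a negative power of the argument times a cosine whose phase is a constant multiple of $(\mu_n x)^{1/(2k)}$, plus a faster-decaying remainder. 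The tail of $S_\rho(x)$ is thereby reduced to trigonometric sums of the shape $\sum_n b(n)\,\mu_n^{-c}\cos\!\big(c'(\mu_n x)^{1/(2k)}+\text{const}\big)$. Grouping the indices $n$ according to the blocks $r^{2k}\le\mu_n<(r+1)^{2k}$ — on which $\mu_n^{1/(2k)}$ lies in $[r,r+1)$ — and applying partial summation, the convergence of these sums follows from exactly the assumed estimate
$$\sup_{0\le h\le 1}\Bigg|\sum_{r^{2k}\le\mu_n\le(r+h)^{2k}}b(n)\,\mu_n^{\sigma-1/(2k)}\Bigg|=o(1)\qquad(r\to\infty),$$
which supplies the cancellation that is lost when the order is decreased by one; the finitely many terms with $\mu_n x$ bounded are harmless, and the fact that one gets only bounded (not uniform) convergence at $\rho=0$ reflects the jump discontinuities of $A_0$ at the points $\lambda_n$.

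With the convergence of $S_\rho$ established, I would integrate the level-$\rho'$ identity over $[0,x]$. On the arithmetic side, $\frac{d}{dx}A_{\rho'}(x)=A_\rho(x)$ directly from \eqref{omega6}, and $A_{\rho'}(0)=0$, so $A_{\rho'}(x)=\int_0^x A_\rho(t)\,dt$; similarly $\frac{d}{dx}Q_{\rho'}(x)=Q_\rho(x)$ from the Mellin--Barnes representation \eqref{poles} (differentiating under the contour integral, using $\chi(s)\to0$), so $Q_{\rho'}(x)=Q_{\rho'}(0)+\int_0^x Q_\rho(t)\,dt$ with $Q_{\rho'}(0)=0$ by evaluating the $\rho'$-identity at $x=0$. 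On the Bessel side, the same integral identity for iterated Bessel integrals that underlies Theorem~\ref{identities351} in \cite{identitiesI} gives, termwise, that $\int_0^x t^{(\delta+\rho)/2}K_{\delta+\rho}(2^k(\mu_n t)^{1/2};\delta-1;k)\,dt$ is a fixed constant multiple of $x^{(\delta+\rho')/2}K_{\delta+\rho'}(2^k(\mu_n x)^{1/2};\delta-1;k)$ (compare $\int_0^x t^{\nu/2}J_\nu(c\sqrt t)\,dt=\tfrac{2}{c}x^{(\nu+1)/2}J_{\nu+1}(c\sqrt x)$), and the bounded convergence of $S_\rho$ from the first step justifies interchanging $\int_0^x$ with $\sum_n$, so that $\int_0^x S_\rho(t)\,dt=S_{\rho'}(x)$.

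Combining the three relations gives $\int_0^x\big(A_\rho(t)-Q_\rho(t)-S_\rho(t)\big)\,dt=A_{\rho'}(x)-Q_{\rho'}(x)-S_{\rho'}(x)=0$ for all $x>0$. At a point $x_0$ not among the $\lambda_n$ the integrand is continuous (there $Q_\rho$ is continuous, $A_\rho$ is continuous, and $S_\rho$ is continuous by its local uniform convergence), hence it vanishes there; at a point $\lambda_n$ (relevant only when $\rho=0$) the identity follows from the bounded convergence of $S_0$, which forces the series to take the mean of its one-sided limits, matching the prime normalization in \eqref{omega6}. Thus $A_\rho(x)=S_\rho(x)+Q_\rho(x)$ at every $x>0$ for which the left-hand side is defined, which is \eqref{identities351a} in the extended range $\rho>2k\sigma_a^*-k\delta-\tf32$, and the uniform- and bounded-convergence assertions are exactly the ones obtained in the first step.
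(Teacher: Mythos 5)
You should first be aware that the paper does not prove this statement at all: Theorem \ref{identities356} is quoted verbatim, ``for convenience,'' as Theorem 4 of \cite[p.~356]{identitiesI}, and is then simply applied in the proof of Theorem \ref{theorem1}. So there is no in-paper proof to compare against; your proposal has to stand on its own against the argument in the cited source. Your overall strategy is the right one and is essentially the classical Chandrasekharan--Narasimhan/Berndt order-lowering device: establish the identity at order $\rho+1$ (where Theorem \ref{identities351} applies), relate the three pieces at order $\rho$ to those at order $\rho+1$ by integration (equivalently, differentiate the higher-order identity), and recover the order-$\rho$ identity at points of continuity. The relations $\tfrac{d}{dx}A_{\rho+1}=A_{\rho}$ from \eqref{omega6}, $\tfrac{d}{dx}Q_{\rho+1}=Q_{\rho}$ from \eqref{poles}, and the termwise recurrence lowering $K_{\delta+\rho+1}$ to $K_{\delta+\rho}$ are all correct in substance.

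The genuine gap is that the step you yourself identify as ``the main obstacle'' --- proving that the series $S_{\rho}(x)$ converges, uniformly on intervals of continuity and boundedly on $[x_1,x_2]$ when $\rho=0$, from the block hypothesis $\sup_{0\leq h\leq 1}\bigl|\sum_{r^{2k}\leq\mu_n\leq(r+h)^{2k}}b(n)\mu_n^{\sigma-1/(2k)}\bigr|=o(1)$ --- is exactly where the entire content of the theorem lives, and you only gesture at it. Everything else in your argument is routine once that convergence is in hand; without it, nothing can be integrated termwise and the conclusion about the sum at jump points cannot even be stated. To close it you must actually insert the asymptotic expansion of $K_{\delta+\rho}(y;\delta-1;k)$ for large $y$ (leading term a negative power of $y$ times $\cos\bigl(c\,y^{1/k}+c'\bigr)$ plus lower-order terms), verify that the resulting exponent of $\mu_n$ is precisely the one in the hypothesis, subdivide each block $r^{2k}\leq\mu_n<(r+1)^{2k}$ into pieces on which the phase $c''(\mu_n x)^{1/(2k)}$ varies by $O(1)$ uniformly for $x$ in the given compact interval, and carry out the Abel summation; the number of such pieces per block depends on $x$, which is why uniformity in $x$ has to be tracked. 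Two smaller defects: (i) you integrate from $0$, but your bounded-convergence claim is only on $[x_1,x_2]$ with $x_1>0$, so the interchange $\int_0^x\sum_n=\sum_n\int_0^x$ near the origin is not justified as written (integrate from a fixed $x_1>0$ instead and handle the constant of integration separately); (ii) at $\rho=0$ the assertion that the series ``takes the mean of its one-sided limits'' at $x=\lambda_n$ does not follow from bounded convergence alone --- it requires a localization argument such as comparing $\tfrac{1}{2h}\int_{\lambda_n-h}^{\lambda_n+h}$ of both sides as $h\to0$, which is how \cite{identitiesI} and \cite{annals2} actually recover the value at the discontinuities.
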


Let $k$ be an arbitrary positive integer.  Let $\chi_1,\chi_2, \dots, \chi_k$ be odd, primitive, non-principal, characters modulo $p_1,p_2, \dots, p_k$, respectively.  Throughout this section, we assume that  $\chi_1,\chi_2, \dots, \chi_k$ and $\chi$ are odd, and so we normally  make this assumption without comment.  Let $a_1, a_2, \dots , a_k$ denote    positive integers such that $(a_j,p_j)=1$, $1\leq j\leq k$.

\begin{definition}\label{defchichi}  Let $n_1,n_2,\dots , n_k$ denote positive integers. Define
\begin{equation*}
d_{\chi_1,\chi_2,\dots,\chi_k}(n):=
\sum_{n_1n_2\cdots n_k=n}\chi_1(n_1)\chi_2(n_2) \cdots \chi_k(n_k)\label{sum1chichi}
\end{equation*}
and
\begin{equation}\label{sumchichi}
\mathbb{D}^k(x):={\sum_{n\leq x}}^{\prime} nd_{\chi_1,\chi_2,\dots,\chi_k}(n),
\end{equation}
where, as customary, the prime $\prime$ on the summation sign indicates that if $x$ is an integer, only $\tf12$ of the term is counted.
\end{definition}

Note that if $k=2$, then \eqref{sumchichi} is identical to \eqref{D^*}.


\begin{theorem}\label{theorem1}
Recall the definition of $K_{\rho+3/2}(x;\tf12;k)$ defined by \eqref{main}, as well as the definitions and notation above.  Then for $\rho>(k-3)/2$,
\begin{gather*}
\sideset{}{'}\sum_{n\le x} n d_{\chi_1,\chi_2,\dots,\chi_k}(n)(x^2-n^2)^{\rho}=\df{(-i)^k(p_1p_2\cdots p_k)^{\rho-1/2}\tau(\chi_1)\tau(\chi_2)\cdots\tau(\chi_k)}{\pi^{k\rho}}\notag\\
\times\sum_{n=1}^{\infty}n d_{\overline{\chi}_1,\overline{\chi}_2,\dots,\overline{\chi}_k}(n)
\left(\df{x}{n}\right)^{\rho+3/2}K_{\rho+3/2}\left(\df{2^k\pi^knx}{p_1p_2\cdots p_k};\df12;k\right).\label{equa1}
\end{gather*}
\end{theorem}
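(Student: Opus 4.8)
The plan is to derive Theorem~\ref{theorem1} from Theorem~\ref{identities356}, applied to the Dirichlet series built from $\prod_{j=1}^{k}L(s,\chi_j)$. First I would set
\begin{equation*}
\varphi(s):=\left(\frac{\pi^k}{p_1p_2\cdots p_k}\right)^{-s}\prod_{j=1}^{k}L(2s-1,\chi_j).
\end{equation*}
Since $\prod_{j=1}^{k}L(w,\chi_j)=\sum_{n=1}^{\infty}d_{\chi_1,\dots,\chi_k}(n)\,n^{-w}$, we have $\varphi(s)=\sum_{n=1}^{\infty}a(n)\lambda_n^{-s}$ with $a(n)=nd_{\chi_1,\dots,\chi_k}(n)$ and $\lambda_n=\pi^k n^2/(p_1\cdots p_k)$, convergent absolutely for $\Realp s>3/2$. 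The crucial structural feature is that $\lambda_n$ is a constant multiple of $n^2$ rather than $n$; this is what will produce the factor $(x^2-n^2)^{\rho}$ in the final identity once $x$ is suitably rescaled.

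Next I would record the functional equation. Each $\chi_j$ is odd, primitive and non-principal modulo the prime $p_j$, so $L(s,\chi_j)$ is entire and $(p_j/\pi)^{(s+1)/2}\Gamma\left(\tfrac{s+1}{2}\right)L(s,\chi_j)$ is carried to its $\overline{\chi}_j$-analogue under $s\mapsto 1-s$ with root number $\tau(\chi_j)/(i\sqrt{p_j})$. Substituting $s\mapsto 2s-1$ in the $j$th factor (which sends $\tfrac{s+1}{2}$ to $s$ and $\tfrac{2-s}{2}$ to $\tfrac{3}{2}-s$) and multiplying over $1\le j\le k$ gives
\begin{equation*}
\Gamma^k(s)\,\varphi(s)=\Gamma^k\!\left(\tfrac{3}{2}-s\right)\psi\!\left(\tfrac{3}{2}-s\right),\qquad\psi(s)=\sum_{n=1}^{\infty}b(n)\,\mu_n^{-s},
\end{equation*}
with $\mu_n=\lambda_n=\pi^k n^2/(p_1\cdots p_k)$ and
\begin{equation*}
b(n)=\frac{(-i)^k\,\tau(\chi_1)\tau(\chi_2)\cdots\tau(\chi_k)}{\sqrt{p_1p_2\cdots p_k}}\;nd_{\overline{\chi}_1,\dots,\overline{\chi}_k}(n).
\end{equation*}
Thus $\varphi,\psi$ fit the framework of Definition~\ref{def} and Theorem~\ref{identities356} with $\Delta(s)=\Gamma^k(s)$ (so $N=k$, $\alpha_n=1$, $A=k$) and $\delta=3/2$; the substitution $s\mapsto 2s$ is precisely what converts the $\Gamma^k(s/2)$ that occurs naturally here (compare the $k=2$ functional equation in the proof of Theorem~\ref{ct}) into the $\Gamma^k(s)$ required by Theorem~\ref{identities356}.

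Then I would verify the hypotheses of Theorem~\ref{identities356}. Since $|b(n)|\le nd(n)$ and $\mu_n\asymp n^2$, the abscissa of absolute convergence of $\psi$ is $\sigma_a^{*}=1$, so $2k\sigma_a^{*}-k\delta-\tfrac{3}{2}=(k-3)/2$ and the admissible range of Theorem~\ref{identities356} is exactly $\rho>(k-3)/2$, which is the hypothesis of Theorem~\ref{theorem1}. The oscillation hypothesis of Theorem~\ref{identities356} is readily verified (as remarked after its statement) using standard bounds on the partial sums of $d_{\overline{\chi}_1,\dots,\overline{\chi}_k}(n)$, which follow from the analytic continuation and polynomial growth of $\prod_j L(s,\overline{\chi}_j)$. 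I would also check that $Q_{\rho}(x)\equiv 0$: the $L(\cdot,\chi_j)$ are entire, so $\varphi$ is entire, and for each integer $m\ge 0$ one has $\varphi(-m)=\left(\pi^k/(p_1\cdots p_k)\right)^{m}\prod_{j}L(-2m-1,\chi_j)=0$, since the negative odd integers are trivial zeros of $L(\cdot,\chi_j)$ for odd $\chi_j$; hence $\Gamma(s)\varphi(s)$ is holomorphic at every non-positive integer, the integrand in \eqref{poles} has no singularities, and $\mathcal{C}$ encloses none.

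Finally, I would apply Theorem~\ref{identities356} at the auxiliary value $X=\pi^k x^2/(p_1p_2\cdots p_k)$. With this choice, $\lambda_n\le X$ is equivalent to $n\le x$, and $X-\lambda_n=\frac{\pi^k}{p_1\cdots p_k}(x^2-n^2)$, so the left side of the identity in Theorem~\ref{identities356} equals $\left(\pi^k/(p_1\cdots p_k)\right)^{\rho}/\Gamma(\rho+1)$ times the left side of Theorem~\ref{theorem1}. On the right side, $X/\mu_n=x^2/n^2$ yields $(X/\mu_n)^{(\delta+\rho)/2}=(x/n)^{\rho+3/2}$, and $2^k(\mu_n X)^{1/2}=2^k\pi^k nx/(p_1\cdots p_k)$, while $\delta+\rho=\rho+\tfrac{3}{2}$ and $\delta-1=\tfrac{1}{2}$, so $K_{\delta+\rho}(\,\cdot\,;\delta-1;k)=K_{\rho+3/2}(\,\cdot\,;\tfrac{1}{2};k)$ in the notation of \eqref{main}. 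Collecting the resulting powers of $\pi$, of $p_1\cdots p_k$ and of $2$, together with the Gamma factor, and inserting the value of $b(n)$, then yields the identity of Theorem~\ref{theorem1}, valid for all $x>0$ for which its left side is defined (with uniform convergence of the Bessel series furnished by Theorem~\ref{identities356}). I expect the only real difficulty to be the bookkeeping in this last step --- carrying the normalizations of $\varphi,\psi,\lambda_n,\mu_n$ and the constants consistently through the two rescalings $s\mapsto 2s$ and $x\mapsto X$; the analytic input, namely $Q_{\rho}\equiv 0$ and the convergence hypothesis of Theorem~\ref{identities356}, is comparatively soft.
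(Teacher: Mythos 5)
Your proposal is correct and follows essentially the same route as the paper: multiply the $k$ odd-character functional equations to obtain $\Gamma^k(s)\varphi(s)=\Gamma^k(\tfrac32-s)\psi(\tfrac32-s)$ with $\lambda_n=\mu_n=\pi^k n^2/(p_1\cdots p_k)$, observe $Q_{\rho}\equiv 0$ from the trivial zeros, and apply Theorems \ref{identities351} and \ref{identities356} at the rescaled point $X=\pi^k x^2/(p_1\cdots p_k)$. Your verification that the admissible range $\rho>(k-3)/2$ comes from Theorem \ref{identities356} and that $\varphi$ vanishes at every non-positive integer (not just at $s=0$) is, if anything, slightly more careful than the paper's.
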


The integral
$$K_{\rho+3/2}\left(\df{2^k\pi^knx}{p_1p_2\cdots p_k};\tf12;k\right)$$
can be represented by a Meijer $G$-function, but we do not provide it here. 

\begin{proof}

Recall that the Dirichlet $L$-function $L(s,\chi)$ of modulus $q$ satisfies the functional equation \cite[p.~82]{bessel2bbskaz}
\begin{equation}\label{funcequa}
\left(\df{\pi}{q}\right)^{-(2s+1)/2}\Gamma\left(s+\df12\right)L(2s,\chi)=
-\df{i\tau(\chi)}{\sqrt{q}}\left(\df{\pi}{q}\right)^{-(1-s)}\Gamma\left(1-s\right)L(1-2s,\overline{\chi}).
\end{equation}
We replace $s$ by $s-\tf12$, $q$ by $p_j$, and   $\chi$ by $\chi_j$ in \eqref{funcequa}, $1\leq j\leq k$. Multiply the $k$ functional equations together to obtain
\begin{gather}
\df{\pi^{-ks}}{(p_1p_2\cdots p_k)^{-s}}\Gamma^k\left(s\right)L(2s-1,\chi_1)L(2s-1,\chi_2)\cdots L(2s-1,\chi_k)\notag\\
=\df{(-i)^k\tau(\chi_1)\tau(\chi_2)\cdots\tau(\chi_k)}{\sqrt{p_1p_2\cdots p_k}}\df{\pi^{-k(3-2s)/2}}{(p_1p_2\cdots p_k)^{-(3-2s)/2}}
\Gamma^k\left(\df{3-2s}{2}\right)\notag\\\times L(2-2s,\overline{\chi}_1)L(2-2s,\overline{\chi}_2)\cdots L(2-2s,\overline{\chi}_k).\label{kfuncequa2}
\end{gather}
Note that $\delta=\tf32$ and that
\begin{gather*}L(2s-1,\chi_1)L(2s-1,\chi_2)\cdots L(2s-1,\chi_k)\\= \sum_{n_1=1}^{\infty}\df{\chi_1(n_1)}{n_1^{2s-1}}
\sum_{n_2=1}^{\infty}\df{\chi_2(n_2)}{n_2^{2s-1}}\cdots\sum_{n_k=1}^{\infty}\df{\chi_k(n_2)}{n_k^{2s-1}}=\sum_{n=1}^{\infty}
\df{n d_{\overline{\chi}_1,\overline{\chi}_2,\dots, \overline{\chi}_k}(n)}{n^{2s}}, \qquad \sigma=\Realp\,s>1.
\end{gather*}

We apply Theorems \ref{identities351} and \ref{identities356}.
Observe that $Q_{\rho}(x)=0$, because, for $1\leq j\leq k$,  $L(s,\chi_j)$ is an entire function and  $L(-1,\chi_j)=0$, since $\chi_j$ is odd.
Also observe that
$$ a(n)=n d_{\chi_1,\chi_2,\dots,\chi_k}(n), \qquad b(n)=
\df{(-i)^k\tau(\chi_1)\tau(\chi_2)\cdots\tau(\chi_k)}{\sqrt{p_1p_2\cdots p_k}}
\,n d_{\overline{\chi}_1,\overline{\chi}_2,\dots,\overline{\chi}_k}(n),$$
and
$$ \lambda_n=\mu_n=\df{\pi^k n^2}{p_1p_2\cdots p_k}.$$
In Theorem \ref{identities351} the sum is over $\lambda_n\leq x$.  Replace $x$ by $$\df{\pi^k x^2}{p_1p_2\cdots p_k}.$$  Thus, the amended sum will be over $n\leq x$ and
$$ (x-\lambda_n)^{\rho} \quad \Rightarrow \quad \df{\pi^{k\rho}}{(p_1p_2\cdots p_k)^{\rho}}(x^2-n^2)^{\rho}.$$
  Also appearing in Theorem \ref{identities351} is a quotient in the summands on the right side that will be transformed by the change in variable above, i.e.,
  $$ \left(\df{x}{\mu_n}\right)^{(3/2+\rho)/2} \quad \Rightarrow \quad \left(\df{x}{n}\right)^{3/2+\rho}.$$
Lastly, on the right-hand side of \eqref{identities351a},
$$2^k(\mu_nx)^{1/2}\quad \Rightarrow \quad 2^k\left(\df{\pi^kn^2}{p_1p_2\cdots p_k}\cdot \df{\pi^kx^2}{p_1p_2\cdots p_k}\right)^{1/2}=
\df{2^k\pi^knx}{p_1p_2\cdots p_k}.$$
With all of these substitutions, we deduce that
\begin{gather*}
\df{\pi^{k\rho}}{(p_1p_2\cdots p_k)^{\rho}}\sideset{}{'}\sum_{ n\le x} n d_{\chi_1,\chi_2,\dots,\chi_k}(n)(x^2-n^2)^{\rho}\notag\\=\df{(-i)^k\tau(\chi_1)\tau(\chi_2)\cdots\tau(\chi_k)}{\sqrt{p_1p_2\cdots p_k}}
\sum_{n=1}^{\infty}n d_{\overline{\chi}_1,\overline{\chi}_2,\dots,\overline{\chi}_k}(n)
\left(\df{x}{n}\right)^{\rho+3/2}K_{\rho+3/2}\left(\df{2^k\pi^knx}{p_1p_2\cdots p_k};\df12;k\right),
\end{gather*}
where $K_{\rho+3/2}$ is defined in \eqref{main}. The identity above is precisely \eqref{equa1}, and so the proof is complete.
\end{proof}

We next prove an identity for the sum
\begin{gather}
\mathbb{S}_{\rho}(a_1,a_2,\dots a_k;p_1,p_2,\dots p_k;x):=\notag\\
\sideset{}{'}\sum_{1\le n_1n_2\cdots n_k\le x} n_1n_2\cdots n_k\,\sin(2\pi n_1a_1/p_1)\sin(2\pi n_2a_2/p_2)\cdots \sin(2\pi n_ka_k/p_k)(x^2-n^2)^{\rho},\label{S}
\end{gather}
where $(a_j,p_j)=1, 1\leq j \leq k$ and $n=n_1n_2\cdots n_k.$

\begin{lemma}\label{25}\cite[p.~72]{bessel2bbskaz} Let $(a,q)=1$ and $ n\in \mathbb{Z}$. Suppose that $\chi$ is an odd, primitive, non-principal character of order $q$.  Then
$$\sin(2\pi na/q)=\df{1}{i\phi(q)}\sum_{\substack{\chi\, \mathrm{ mod }\, q\\ \chi \,\mathrm{ odd }\,}}\chi(a)\tau(\overline{\chi})\chi(n),$$
where $\phi(q)$ denotes Euler's $\phi$-function.
\end{lemma}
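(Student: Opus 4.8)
The plan is to derive the identity from the definition of the Gauss sum together with the orthogonality relations for Dirichlet characters; no deep input is needed. First I would dispose of the degenerate case: if $q\mid n$, then $\chi(n)=0$ for every character $\chi$ modulo $q$, while $\sin(2\pi na/q)=0$, so both sides vanish and there is nothing to prove. Since $q$ is prime in all of our applications, this is the only way $\gcd(n,q)>1$ can occur; the primitivity hypothesis on $\chi$ plays no role in the route sketched here, but it is exactly what would be needed for the alternative ``separability'' argument mentioned at the end. So assume $\gcd(n,q)=1$.

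Next I would expand $\tau(\overline{\chi})=\sum_{t\bmod q}\overline{\chi}(t)\,e^{2\pi i t/q}$ and collect character values using complete multiplicativity, $\chi(a)\chi(n)\overline{\chi}(t)=\chi(an)\overline{\chi}(t)$, so that the right-hand side becomes
\begin{equation*}
\frac{1}{i\phi(q)}\sum_{t\bmod q}e^{2\pi i t/q}\sum_{\substack{\chi\bmod q\\ \chi\text{ odd}}}\chi(an)\,\overline{\chi}(t).
\end{equation*}
The terms with $\gcd(t,q)>1$ drop out because then $\overline{\chi}(t)=0$ for all $\chi$; for the surviving $t$ we have $\overline{\chi}(t)=\chi(t^{-1})$ with $t^{-1}$ the inverse of $t$ modulo $q$, so the inner sum is $\sum_{\chi\text{ odd}}\chi(ant^{-1})$. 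I would then apply the orthogonality relation that isolates the odd characters,
\begin{equation*}
\sum_{\substack{\chi\bmod q\\ \chi\text{ odd}}}\chi(m)=\frac12\sum_{\chi\bmod q}\bigl(1-\chi(-1)\bigr)\chi(m)=
\begin{cases}
\phi(q)/2, & m\equiv 1\pmod q,\\
-\phi(q)/2, & m\equiv -1\pmod q,\\
0, & \text{otherwise},
\end{cases}
\end{equation*}
valid for $\gcd(m,q)=1$. Applied with $m=ant^{-1}$ (which is coprime to $q$ since $a$, $n$, and $t$ are), this retains exactly the two residue classes $t\equiv an$ and $t\equiv -an$ modulo $q$, with weights $+\phi(q)/2$ and $-\phi(q)/2$ respectively, and both of those classes automatically consist of units.

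Substituting back and using periodicity of $e^{2\pi i t/q}$ in $t$ modulo $q$, the double sum would telescope to
\begin{equation*}
\frac{1}{i\phi(q)}\cdot\frac{\phi(q)}{2}\Bigl(e^{2\pi i an/q}-e^{-2\pi i an/q}\Bigr)=\frac{1}{2i}\Bigl(e^{2\pi i na/q}-e^{-2\pi i na/q}\Bigr)=\sin(2\pi na/q),
\end{equation*}
recovering the claimed identity. I do not anticipate a genuine obstacle; the only points requiring care are the restriction to $t$ coprime to $q$ (automatic once one notes $\overline{\chi}(t)=0$ otherwise) and the degenerate case $q\mid n$ handled at the outset. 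An essentially equivalent and perhaps shorter presentation starts from the separability identity $\tau(\overline{\chi})\,\chi(n)=\sum_{m\bmod q}\overline{\chi}(m)\,e^{2\pi i mn/q}$ for primitive $\chi$, makes the change of variables $m\mapsto am$, and then invokes the same orthogonality relation; I would use whichever is cleaner in the final write-up.
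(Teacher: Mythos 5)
Your argument is correct: expanding $\tau(\overline{\chi})$, collecting $\chi(an)\overline{\chi}(t)$, and applying the odd-character orthogonality relation (which is exactly Lemma \ref{26} of the paper with $b=1$) isolates the classes $t\equiv\pm an\pmod q$ and yields $\sin(2\pi na/q)$, and your observation that the identity needs either $q$ prime or $(n,q)=1$ to dispose of the degenerate case is the right caveat. The paper itself gives no proof --- it simply quotes the lemma from the earlier reference --- and your derivation is the standard one used there, so there is nothing substantive to compare.
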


\begin{lemma}\label{26} \cite[p.~3806, Equation (4.12)]{pams} For any primitive character $\chi$ modulo $q$,
$$
\sum_{\substack{\chi\, \mathrm{ mod }\, q\\ \chi \,\mathrm{ odd }\,}}\chi(a)\overline{\chi}(b)=\begin{cases}\pm\tf12 \phi(q), \quad &\text{if } a\equiv \pm b\pmod q \text{ and } (a,q)=1,\\0, &\text{otherwise}.
\end{cases}$$
\end{lemma}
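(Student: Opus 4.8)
The plan is to recover the restricted sum over odd characters from the ordinary orthogonality relation for Dirichlet characters modulo $q$. Recall that for all integers $a,b$ one has
$$\sum_{\chi \bmod q}\chi(a)\overline{\chi}(b)=\begin{cases}\phi(q),& a\equiv b\pmod q\ \text{and}\ (ab,q)=1,\\ 0,& \text{otherwise}.\end{cases}$$
A character modulo $q$ is odd precisely when $\chi(-1)=-1$, so $\tf12\bigl(1-\chi(-1)\bigr)$ is the indicator function of the set of odd characters. Inserting this factor and using the multiplicativity relation $\chi(-1)\chi(a)=\chi(-a)$, I would write
$$\sum_{\substack{\chi \bmod q\\ \chi\ \text{odd}}}\chi(a)\overline{\chi}(b)=\tf12\sum_{\chi \bmod q}\chi(a)\overline{\chi}(b)-\tf12\sum_{\chi \bmod q}\chi(-a)\overline{\chi}(b),$$
which turns the claim into two applications of orthogonality.

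Next I would evaluate the two full character sums. If $(a,q)>1$ or $(b,q)>1$ then every term on the right vanishes, so the left side is $0$, matching the ``otherwise'' case; hence I may assume $(ab,q)=1$, which in particular gives the hypothesis $(a,q)=1$ in the statement. Then the first sum equals $\phi(q)$ if $a\equiv b\pmod q$ and $0$ otherwise, while the second equals $\phi(q)$ if $a\equiv -b\pmod q$ and $0$ otherwise. Combining the two, the left-hand side is $\tf12\phi(q)$ when $a\equiv b\pmod q$ (and $a\not\equiv -b$), is $-\tf12\phi(q)$ when $a\equiv -b\pmod q$ (and $a\not\equiv b$), and is $0$ when $a\not\equiv\pm b\pmod q$. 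This is exactly the asserted piecewise formula, with the sign in front of $\tf12\phi(q)$ recording whether $a\equiv b$ or $a\equiv -b$ modulo $q$.

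The one point that calls for a remark rather than a computation is the degenerate overlap in which $a\equiv b\equiv -b\pmod q$: this forces $q\mid 2b$, hence $q\mid 2$ since $(b,q)=1$, and there are no odd Dirichlet characters to the moduli $1$ or $2$. Thus the two congruence alternatives are genuinely disjoint for every modulus that supports an odd character, so the piecewise formula is unambiguous; in the setting of the present paper $q$ is an odd prime, where this is immediate. Beyond this bookkeeping I expect no real obstacle: the argument is a two-line manipulation of the orthogonality relations, and primitivity of the characters plays no role in it.
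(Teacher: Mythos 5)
Your proof is correct. The paper does not prove this lemma at all --- it simply cites \cite[p.~3806, Eq.~(4.12)]{pams} --- and your argument (inserting the indicator $\tfrac12(1-\chi(-1))$ for odd characters and applying the standard orthogonality relation twice) is exactly the standard derivation one would find there; your observation that the two congruence alternatives can only coincide when $q\mid 2$, where no odd characters exist, correctly disposes of the only potential ambiguity in the piecewise formula.
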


We let
\begin{equation*}\label{notation} \sum_{\pm n_1,\pm n_2,\dots, \pm n_k}
\end{equation*}
 denote a sum over all $2^k$ pairs
$$n_1\equiv \pm a_1 \pmod{p_1}, n_2\equiv \pm{a_2}   \pmod{p_2}, n_k\equiv \pm a_k \pmod{p_k}.$$

\begin{theorem}\label{theorem2}
 In the notation above, for $\rho>(k-3)/2$,
\begin{gather*}
\mathbb{S}_{\rho}(a_1,a_2,\dots, a_k;p_1,p_2,\dots, p_k;x)\\
=\df{x^{\rho+3/2}(p_1p_2\cdots p_k)^{\rho+1/2}}{2^k\pi^{k\rho}}\sum_{\pm n_1,\pm n_2,\dots, \pm n_k}
\df{(-1)^{\textup{sgn}}K_{\rho+3/2}\left(\df{2^k\pi^kn_1n_2\cdots n_k x}{p_1p_2\cdots p_k};\df12;k\right)}{\sqrt{n_1n_2\cdots n_k}},
\end{gather*}
where \textup{sgn} denotes the number of minus signs in a particular $k$-tuple, $\pm n_1,\pm n_2,\dots, \pm n_k.$
\end{theorem}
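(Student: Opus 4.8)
The plan is to derive Theorem~\ref{theorem2} from Theorem~\ref{theorem1} by inserting the finite character expansion of each sine factor and then unwinding it with orthogonality of Dirichlet characters.

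First I would apply Lemma~\ref{25} to each of the $k$ factors, writing
$$\sin(2\pi n_ja_j/p_j)=\df{1}{i\phi(p_j)}\sum_{\substack{\chi_j\,\mathrm{mod}\,p_j\\\chi_j\,\mathrm{odd}}}\chi_j(a_j)\tau(\overline{\chi_j})\chi_j(n_j),\qquad 1\le j\le k,$$
and multiplying these together. Substituting into \eqref{S} and pulling the (finite) character sums outside the (for fixed $x$, finite) sum over $n_1n_2\cdots n_k\le x$, the inner sum becomes
$$\sideset{}{'}\sum_{n_1n_2\cdots n_k\le x}n_1n_2\cdots n_k\,\chi_1(n_1)\cdots\chi_k(n_k)(x^2-n^2)^{\rho}=\sideset{}{'}\sum_{n\le x}n\,d_{\chi_1,\chi_2,\dots,\chi_k}(n)(x^2-n^2)^{\rho},$$
after grouping by $n=n_1n_2\cdots n_k$ and invoking Definition~\ref{defchichi}. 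To this last sum I would apply Theorem~\ref{theorem1} for each fixed $k$-tuple of odd characters; its hypothesis $\rho>(k-3)/2$ is exactly the one in our statement, since the bound $\rho>2k\sigma_a^{*}-k\delta-\tf32$ with $\delta=\tf32$ and $\sigma_a^{*}=1$ (the abscissa of absolute convergence of $\sum n\,d_{\overline{\chi}_1,\dots,\overline{\chi}_k}(n)n^{-2s}$) equals $(k-3)/2$, and it is precisely the extension in Theorem~\ref{identities356} that furnishes this full range together with the uniform (and, for $\rho=0$, bounded) convergence needed to manipulate the resulting series.

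This replaces the inner sum by the $K_{\rho+3/2}$-series whose coefficients are a constant times $\tau(\chi_1)\cdots\tau(\chi_k)\,n\,d_{\overline{\chi}_1,\dots,\overline{\chi}_k}(n)$. I would then expand $n\,d_{\overline{\chi}_1,\dots,\overline{\chi}_k}(n)$ back into a sum over $(n_1,\dots,n_k)$ with $n_1\cdots n_k=n$, interchange the finite character sums with the infinite sum over $(n_1,\dots,n_k)$ — legitimate by the convergence in Theorem~\ref{identities356} — and collapse the character sums with Lemma~\ref{26}: for each $j$, $\sum_{\chi_j\,\mathrm{odd}}\chi_j(a_j)\overline{\chi_j}(n_j)$ equals $\pm\tf12\phi(p_j)$ precisely when $n_j\equiv\pm a_j\pmod{p_j}$ and vanishes otherwise. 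En route I would use the Gauss-sum relation $\tau(\chi_j)\tau(\overline{\chi_j})=\chi_j(-1)p_j=-p_j$, valid because each $\chi_j$ is odd and primitive, to merge the Gauss sum coming from Lemma~\ref{25} with the one produced by Theorem~\ref{theorem1}. The surviving sum then runs over the $2^k$ patterns $n_j\equiv\pm a_j\pmod{p_j}$, the product of the $\pm1$'s from Lemma~\ref{26} is $(-1)^{\mathrm{sgn}}$ with $\mathrm{sgn}$ the number of minus signs, and collecting the remaining constants — the $k$ factors $1/(i\phi(p_j))$, the factor $(-i)^k(p_1\cdots p_k)^{\rho-1/2}\pi^{-k\rho}$ from Theorem~\ref{theorem1}, the $k$ factors $\tf12\phi(p_j)$ and the $k$ prime powers $-p_j$, together with the powers of $x$ and $n_1\cdots n_k$ carried by the summand $n\,d_{\overline{\chi}}(n)(x/n)^{\rho+3/2}$ — yields the right-hand side of the statement.

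The two delicate points are the justification of interchanging the finite character sums with the infinite Bessel-type series, where one must invoke the convergence guaranteed by Theorem~\ref{identities356} rather than estimating the $K$-integrals directly, and the bookkeeping of constants and signs, where the powers of $i$ (arising from both Lemma~\ref{25} and Theorem~\ref{theorem1}), the parity factor $\chi_j(-1)=-1$, and the $2^{-k}$ coming from orthogonality must all be tracked exactly so that no spurious factor of $-1$ or of $2$ survives. Everything else is a routine rearrangement.
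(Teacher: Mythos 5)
Your proposal is correct and follows essentially the same route as the paper: expand each sine via Lemma \ref{25}, reduce to the character divisor sums handled by Theorem \ref{theorem1}, and collapse the character sums with Lemma \ref{26} and the relation $\tau(\chi_j)\tau(\overline{\chi}_j)=-p_j$; the paper merely organizes this as showing that both sides equal a common character-sum expression rather than as a single forward chain, and your accounting of the constants, the powers of $i$, and the origin of the hypothesis $\rho>(k-3)/2$ all match the paper's.
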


\begin{proof}
By \eqref{S} and Lemma \ref{25}, if $n=n_1 n_2\cdots n_k$,
\begin{align}\label{a1}
&\mathbb{S}_{\rho}(a_1,a_2,\dots, a_k;p_1,p_2,\cdots,p_k;x)=\df{(-i)^k}{\phi(p_1)\phi(p_2)\cdots\phi(p_k)}
\sideset{}{'}\sum_{1\le n_1n_2\cdots n_k\le x} n_1n_2\cdots n_k(x^2-n^2)^{\rho}\notag\\
&\times\sum_{\chi_1 \text{ mod } p_1}\sum_{\chi_2 \text{ mod } p_2}\cdots\sum_{\chi_k \text{ mod } p_k}
\chi(a_1)\chi(a_2)\cdots\chi(a_k)\tau(\overline{\chi}_1)\tau(\overline{\chi}_2)\cdots\tau(\overline{\chi}_k)\chi(n_1)\chi(n_2)\cdots\chi(n_k)\notag\\
=&\df{(-i)^k}{\phi(p_1)\phi(p_2)\cdots\phi(p_k)}\times\sum_{\chi_1 \text{ mod } p_1}\sum_{\chi_2 \text{ mod } p_2}\cdots\sum_{\chi_k \text{ mod } p_k}
\chi(a_1)\chi(a_2)\cdots\chi(a_k)\\
&\times\tau(\overline{\chi}_1)\tau(\overline{\chi}_2)\cdots\tau(\overline{\chi}_k)\sideset{}{'}\sum_{n\leq x}n d_{\chi_1,\chi_2,\dots,\chi_k}(n)(x^2-n^2)^{\rho}.\notag
\end{align}
On the other hand, by Lemma \ref{26},
{\allowdisplaybreaks
\begin{align}
&\df{x^{\rho+3/2}(p_1p_2\cdots p_k)^{\rho+1/2}}{2^k}
\sum_{\pm n_1,\pm n_2,\dots, \pm n_k}
\df{(-1)^{\textup{sgn}}K_{\rho+3/2}\left(\df{2^k\pi^kn_1n_2\cdots  n_k x}{p_1p_2\cdots p_k};\df12;k\right)}{(n_1n_2\cdots n_k)^{\rho+1/2}}\notag\\
=&\df{x^{\rho+3/2}(p_1p_2\cdots p_k)^{\rho+1/2}}{\phi(p_1)\phi(p_2)\cdots\phi(p_k)}
\sum_{n_1=1}^{\infty}\sum_{n_2=1}^{\infty}\cdots\sum_{n_k=1}^{\infty}
\df{K_{\rho+3/2}\left(\df{2^k\pi^kn_1n_2\cdots,  n_k x}{p_1p_2\cdots p_k};\df12;k\right)}{(n_1n_2\cdots n_k)^{\rho+1/2}}\notag\\
&\times\sum_{\chi_1 \text{ mod }p_1}\sum_{\chi_2 \text{ mod } p_2}\cdots\sum_{\chi_k \text{ mod } p_k}\chi_1(a_1)\chi_2(a_2)\cdots\chi_k(a_k)
\overline{\chi}_1(n_1)\overline{\chi}_2(n_2)\cdots\overline{\chi}_k(n_k)\notag\\
=&\df{x^{\rho+3/2}(p_1p_2\cdots p_k)^{\rho+1/2}}{\phi(p_1)\phi(p_2)\cdots\phi(p_k)}\sum_{\chi_1 \text{ mod }p_1}\sum_{\chi_2 \text{ mod } p_2}\cdots\sum_{\chi_k \text{ mod } p_k}\chi_1(a_1)\chi_2(a_2)\cdots\chi_k(a_k)\notag\\
&\times\sum_{n=1}^{\infty}d_{\overline{\chi}_1,\overline{\chi}_2,\dots,\overline{\chi}_k}(n)
\df{K_{\rho+3/2}\left(\df{2^k\pi^k n x}{p_1p_2\cdots p_k};\df12;k\right)}{n^{\rho+1/2}}\notag\\
=&\df{(p_1p_2\cdots p_k)^{\rho+1/2}}{\phi(p_1)\phi(p_2)\cdots\phi(p_k)}\sum_{\chi_1 \text{ mod }p_1}\sum_{\chi_2 \text{ mod } p_2}\cdots\sum_{\chi_k \text{ mod } p_k}\chi_1(a_1)\chi_2(a_2)\cdots\chi_k(a_k)\notag\\
&\times\sum_{n=1}^{\infty}nd_{\overline{\chi}_1,\overline{\chi}_2,\dots,\overline{\chi}_k}(n)\left(\df{x}{n}\right)^{\rho+3/2}
K_{\rho+3/2}\left(\df{2^k\pi^k n x}{p_1p_2\cdots p_k};\df12;k\right)\notag\\
=&\df{(p_1p_2\cdots p_k)^{\rho+1/2}}{\phi(p_1)\phi(p_2)\cdots\phi(p_k)}\sum_{\chi_1 \text{ mod }p_1}\sum_{\chi_2 \text{ mod } p_2}\cdots\sum_{\chi_k \text{ mod } p_k}\chi_1(a_1)\chi_2(a_2)\cdots\chi_k(a_k)\notag\\
&\times\df{\pi^{k\rho}}{(-i)^k\tau(\chi_1)\tau(\chi_2)\cdots\tau(\chi_k)(p_1p_2\cdots p_k)^{\rho-1/2}}\sideset{}{'}\sum_{ n\le x} n d_{\chi_1,\chi_2,\dots,\chi_k}(n)(x^2-n^2)^{\rho}\notag\\
=&\df{(-i)^k\pi^{k\rho}}{\phi(p_1)\phi(p_2)\cdots\phi(p_k)}\sum_{\chi_1 \text{ mod }p_1}\sum_{\chi_2 \text{ mod } p_2}\cdots\sum_{\chi_k \text{ mod } p_k}\chi_1(a_1)\chi_2(a_2)\cdots\chi_k(a_k)\notag\\
&\times \tau(\overline{\chi}_1)\tau(\overline{\chi}_2)\cdots\tau(\overline{\chi}_k)\sideset{}{'}\sum_{ n\le x} n d_{\chi_1,\chi_2,\dots,\chi_k}(n)(x^2-n^2)^{\rho},\label{a2}
\end{align}}
 where in the penultimate step we applied Theorem \ref{theorem1}, and in the last step, used the fact that for odd $\chi$  \cite[p.~45]{bew},
\begin{equation*}
\tau(\chi_j)\tau(\overline{\chi}_j)=-p_j,\qquad 1\leq j\leq k.
\end{equation*}
If we now compare \eqref{a1} with \eqref{a2}, we deduce Theorem \ref{theorem2}.
\end{proof}

Next, we offer a generalization of Theorem \ref{ct}.

\begin{theorem}\label{omegak} Assume that $\chi_1, \chi_2, \dots, \chi_k$ are non-principal primitive odd characters modulo $p_1,p_2,\dots,p_k$, respectively. Recall the definition \eqref{sumchichi} of $\mathbb{D}^k(x)$.
Then
\begin{align}
\varlimsup_{x\to\infty}\df{\Realp{\, {\mathbb{D}}^k(x)}}{x^{(3k-1)/(2k)}}&=+\infty, \label{aaa11}\\
\varliminf_{x\to\infty}\df{\Realp{\, {\mathbb{D}}^k(x)}}{x^{(3k-1)/(2k)}}&=-\infty. \label{abc11}
\end{align}
Both \eqref{aaa11} and \eqref{abc11} remain valid if we replace $\Realp$ by $\Imp$ in each of \eqref{aaa11} and \eqref{abc11}.
\end{theorem}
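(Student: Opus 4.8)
The plan is to obtain Theorem~\ref{omegak} from the Chandrasekharan--Narasimhan $\Omega$-theorem (Theorem~\ref{omega1}), applied to the combined functional equation \eqref{kfuncequa2}, in exactly the manner in which Theorems~\ref{at}, \ref{bt} and \ref{ct} were proved. Reading off the data of Definition~\ref{def} from \eqref{kfuncequa2} just as in the proof of Theorem~\ref{theorem1}, I would take $\Delta(s)=\Gamma^k(s)$, so that $N=k$, $\alpha_n=1$ for $1\le n\le k$, $A=k$, $\delta=\tf32$, and (for $\mathbb{D}^k(x)$) $\rho=0$, together with $a(n)=nd_{\chi_1,\chi_2,\dots,\chi_k}(n)$, $\lambda_n=\mu_n=\pi^kn^2/(p_1p_2\cdots p_k)$, and $b(n)=(-i)^k\tau(\chi_1)\tau(\chi_2)\cdots\tau(\chi_k)\,nd_{\overline{\chi}_1,\overline{\chi}_2,\dots,\overline{\chi}_k}(n)/\sqrt{p_1p_2\cdots p_k}$. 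With $\varphi(s)=\sum_n a(n)\lambda_n^{-s}$ one checks that \eqref{kfuncequa2} is precisely $\Delta(s)\varphi(s)=\Delta(\delta-s)\psi(\delta-s)$, and then \eqref{omega3} gives $\theta=(A\delta+\rho(2A-1)-\tf12)/(2A)=(3k-1)/(4k)$.

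Next I would note $Q_\rho(x)=Q(x)=0$. Since every $\chi_j$ is odd, primitive, and non-principal, each $L(s,\chi_j)$ is entire and vanishes at $s=-1$, so $\varphi(s)$ is entire and $\varphi(0)=\prod_{j=1}^{k}L(-1,\chi_j)=0$; with $\rho=0$ the integrand in \eqref{poles} reduces to $\varphi(s)x^s/s$, whose only pole is the simple one at $s=0$, of residue $\varphi(0)=0$. Since $\lambda_n\le x$ is equivalent to $n\le c_0\sqrt{x}$ with $c_0:=\sqrt{p_1p_2\cdots p_k/\pi^k}$, we have $A_\rho(x)=\mathbb{D}^k(c_0\sqrt{x})$ with $\mathbb{D}^k$ as in \eqref{sumchichi}, so \eqref{omega4}--\eqref{omega5} say that $\Realp\,\mathbb{D}^k(c_0\sqrt{x})/x^{(3k-1)/(4k)}$ has limsup $+\infty$ and liminf $-\infty$ as $x\to\infty$. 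Setting $y=c_0\sqrt{x}$ turns $x^{(3k-1)/(4k)}$ into a positive constant times $y^{(3k-1)/(2k)}$, which is \eqref{aaa11} and \eqref{abc11}; the statements with $\Imp$ follow from the last sentence of Theorem~\ref{omega1} once the corresponding form of \eqref{omega2} is checked the same way.

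The substantive step is the verification of the two remaining hypotheses of Theorem~\ref{omega1}. For the linear-independence condition, take $\{\mu_{n_k}\}$ to be the $\mu_n$ with $n$ prime; since $\mu_n^{1/(2A)}$ is a fixed positive constant times $n^{1/k}$, this amounts to the classical fact that no $n^{1/k}$ is a $\pm1$-combination of distinct radicals $P^{1/k}$ ($P$ prime) unless $n$ is itself one of those primes---deduced from the $\mathbb{Q}$-linear independence of $\{P^{1/k}\}$ together with the reduction to the $k$-free part of $n$, exactly as in the $k=2$ situations behind Theorems~\ref{at}--\ref{ct}. For \eqref{omega2}, the exponent is $(A\delta+\rho+\tf12)/(2A)=(3k+1)/(4k)$; after cancelling the factor $n$ common to $a(n)$ and $b(n)$, the series in \eqref{omega2} equals, up to a positive constant, $\sum_{n_k}|\Realp\{(-i)^k\tau(\chi_1)\cdots\tau(\chi_k)\,d_{\overline{\chi}_1,\dots,\overline{\chi}_k}(n_k)\}|/n_k^{(k+1)/(2k)}$. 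Restricting $n_k$ to primes $P\equiv r\pmod{p_1p_2\cdots p_k}$, we have $d_{\overline{\chi}_1,\dots,\overline{\chi}_k}(P)=\sum_{j=1}^{k}\overline{\chi}_j(r)$, a constant; Dirichlet's theorem supplies infinitely many such $P$; and since $(k+1)/(2k)\le1$, the series diverges as long as $\Realp\{(-i)^k\tau(\chi_1)\cdots\tau(\chi_k)\sum_{j}\overline{\chi}_j(r)\}\ne0$ for some admissible $r$. One takes $r=1$, for which $\sum_j\overline{\chi}_j(1)=k$, unless $(-i)^k\tau(\chi_1)\cdots\tau(\chi_k)$ is purely imaginary; in that case one chooses $r$ with $\sum_j\overline{\chi}_j(r)\notin\mathbb{R}$, which must exist, since otherwise---varying $r$ one modulus at a time via the Chinese Remainder Theorem---every $\chi_j$ would be real, hence the Legendre symbol modulo $p_j\equiv3\pmod4$ with $\tau(\chi_j)=i\sqrt{p_j}$, and then $(-i)^k\tau(\chi_1)\cdots\tau(\chi_k)=\sqrt{p_1p_2\cdots p_k}$ would be real, a contradiction.

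The main obstacle I expect is exactly this last point---excluding the degenerate possibility that $\Realp\{(-i)^k\tau(\chi_1)\cdots\tau(\chi_k)\sum_j\overline{\chi}_j(r)\}$ vanishes for every $r$. The remainder is bookkeeping that runs parallel to Theorems~\ref{at}, \ref{bt}, \ref{ct} and Theorem~\ref{theorem1}; note in particular that with the normalization $\Delta(s)=\Gamma^k(s)$ one has $A=k\ge1$ for all $k\ge1$, so no separate treatment of small $k$ is needed.
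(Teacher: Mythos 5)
Your proposal is correct for $k\ge 2$ and takes essentially the same route as the paper: apply Theorem \ref{omega1} to the product functional equation \eqref{kfuncequa2}, check $Q(x)=0$ via the trivial zero of each $L(s,\chi_j)$ at $s=-1$, and verify \eqref{omega2} by restricting to primes in a progression, with the same dichotomy on whether $(-i)^k\tau(\chi_1)\cdots\tau(\chi_k)$ is purely imaginary that the paper uses in Theorems \ref{at}--\ref{ct}. The only difference is normalization: the paper replaces $s$ by $s/2$ so that $A=\tf12 k$, $\delta=3$, $\mu_n\propto n$, whereas you keep $A=k$, $\delta=\tf32$, $\mu_n\propto n^2$; the two are related by the rescaling $s\mapsto s/2$, under which $\theta$ and the conclusion transform consistently, so this is immaterial (and your version has the cosmetic advantage that $A\ge1$ automatically).

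One caveat: your closing remark that ``no separate treatment of small $k$ is needed'' is wrong for $k=1$. There the linear-independence hypothesis of Theorem \ref{omega1} cannot be met with your choice (one would need a set of integers no $\pm1$-combination of which is again an integer other than a member of the set), and indeed the conclusion itself fails for $k=1$, since $\mathbb{D}^1(x)=\sideset{}{'}\sum_{n\le x}n\chi_1(n)=O(x)$ by partial summation while the theorem would assert $\Omega_{\pm}(x)$ with the $\pm\infty$ interpretation. The theorem should be read with $k\ge2$ (as the introduction of the paper indicates), and your verification of the radical independence of $\{P^{1/k}\}$ is exactly what is needed in that range.
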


\begin{proof} Replacing $s$ by $s/2$ in \eqref{kfuncequa2}, we see that $A=\tf12 k$ and $\delta=3$.
Thus, from the definition \eqref{omega3},
$$\theta=\df{\tf12 k\cdot3-\tf12}{2\cdot\tf12 k}=\df{3k-1}{2k}.$$
With the use of Theorem \ref{omega1}, the remainder of the proof follows along the same lines as the proof of Theorem \ref{ct}.
\end{proof}
Recall from \eqref{S} that
\begin{gather*}
\mathbb{S}(a_1,a_2,\dots a_k;p_1,p_2,\dots p_k;x):=\mathbb{S}_{0}(a_1,a_2,\dots a_k;p_1,p_2,\dots p_k;x)\notag\\
=\sideset{}{'}\sum_{1\le n_1n_2\cdots n_k\le x} n_1n_2\cdots n_k\,\sin(2\pi n_1a_1/p_1)\sin(2\pi n_2a_2/p_2)\cdots \sin(2\pi n_ka_k/p_k).
\end{gather*}
From \eqref{a1} with $\rho=0$, we see that $\mathbb{S}(a_1,a_2,\dots, a_k;p_1,p_2,\dots, p_k;x)$ is a linear combination of terms of the form
$$\mathbb{D}^k(x)=\sideset{}{'}\sum_{n\leq x}n d_{\chi_1,\chi_2,\dots,\chi_k}(n).$$
In analogy with Conjecture \ref{3tttt}, a similar conjecture can be made for $$\mathbb{S}(a_1,a_2,\dots, a_k;p_1,p_2,\dots, p_k;x).$$
Thus, we apply Theorem \ref{omegak} to each of these sums to obtain the following conjecture.

\begin{conjecture}\label{3ttttt} If $\mathbb{S}(a_1,a_2,\dots a_k;p_1,p_2,\dots p_k;x)$ is defined by \eqref{S}, then
\begin{align*}
\varlimsup_{x\to\infty}\df{ \mathbb{S}(a_1,a_2,\dots, a_k;p_1,p_2,\dots, p_k;x) }{x^{(3k-1)/(2k)}}&=+\infty,\\ 
\varliminf_{x\to\infty}\df{ \mathbb{S}(a_1,a_2,\dots, a_k;p_1,p_2,\dots, p_k;x)}{x^{(3k-1)/(2k)}}&=-\infty. 
\end{align*}
\end{conjecture}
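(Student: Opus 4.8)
The plan is to establish the conjecture for $k\ge2$ by applying Chandrasekharan--Narasimhan's Theorem \ref{omega1} directly to the Dirichlet series attached to $\mathbb{S}(a_1,\dots,a_k;p_1,\dots,p_k;x)$, rather than to the individual summands $\mathbb{D}^k(x)$. As the discussion preceding the conjecture explains, the latter route cannot succeed, since a linear combination of functions each of which is $\Omega_{\pm}(x^{(3k-1)/(2k)})$ need not itself have that behaviour. The case $k=1$ is excluded here only because Definition \ref{def} requires $A\ge1$; it reduces to the behaviour of ${\sum_{n\le x}}^{\prime}\,n\chi(n)$ and can be treated separately.

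First I would record the generating function. Put $a_{\mathbb{S}}(n):=n\sum_{n_1n_2\cdots n_k=n}\prod_{j=1}^{k}\sin(2\pi n_ja_j/p_j)$, so that $\mathbb{S}(a_1,\dots,a_k;p_1,\dots,p_k;x)={\sum_{n\le x}}^{\prime}a_{\mathbb{S}}(n)$ and
\begin{equation*}
\varphi_{\mathbb{S}}(s):=\sum_{n=1}^{\infty}\frac{a_{\mathbb{S}}(n)}{n^{s}}=\prod_{j=1}^{k}\ \sum_{m=1}^{\infty}\frac{\sin(2\pi m a_j/p_j)}{m^{s-1}}.
\end{equation*}
Each factor is entire (its coefficients have mean zero over a period) and, by the Hurwitz formula, satisfies a functional equation under $s\mapsto 2-s$; multiplying the $k$ factors and applying the Legendre duplication and reflection formulas for $\Gamma$ brings the product into the form of Definition \ref{def},
\begin{equation*}
\Delta(s)\,\varphi_{\mathbb{S}}(s)=\Delta(\delta-s)\,\psi_{\mathbb{S}}(\delta-s),\qquad \Delta(s)=\Gamma^k(s/2),\quad \delta=3,
\end{equation*}
with $A=k/2$, $\lambda_n=n$, $\mu_n=(\pi^{k}/(p_1\cdots p_k))\,n$, and dual coefficients $b_{\mathbb{S}}(n)=c\,n\,C(n)$, where $c\ne0$ is real and
\begin{equation*}
C(n):=\sum_{n_1n_2\cdots n_k=n}\ \prod_{j=1}^{k}\varepsilon_j(n_j),\qquad \varepsilon_j(m):=\begin{cases}\phantom{-}1,& m\equiv a_j\pmod{p_j},\\ -1,& m\equiv -a_j\pmod{p_j},\\ \phantom{-}0,&\text{otherwise.}\end{cases}
\end{equation*}
Equivalently, this functional equation, together with the formula for $b_{\mathbb{S}}(n)$, can be assembled from \eqref{kfuncequa2} summed against characters by means of Lemmas \ref{25} and \ref{26}. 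The function $\varphi_{\mathbb{S}}$ is entire and has zeros at $s=0,-2,-4,\dots$ whose orders match the poles of $\Gamma^k(s/2)$, so $\Delta(s)\varphi_{\mathbb{S}}(s)$ is entire; it tends to $0$ uniformly in vertical strips as $|t|\to\infty$, because $\Gamma^k(s/2)$ does and $\varphi_{\mathbb{S}}$ is of finite order there. Thus the analyticity hypothesis of Definition \ref{def} holds and, $\varphi_{\mathbb{S}}$ being entire, $Q_{\rho}(x)\equiv0$ in \eqref{poles}. With $\rho=0$, \eqref{omega3} gives $\theta=(A\delta-1/2)/(2A)=(3k-1)/(2k)$, the exponent in the conjecture, and, $\mathbb{S}$ being real-valued, ${\sum_{n\le x}}^{\prime}a_{\mathbb{S}}(n)=\Realp\{A_{\rho}(x)-Q_{\rho}(x)\}$ in the notation of \eqref{omega6}. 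Hence Theorem \ref{omega1} would finish the proof once its two arithmetic hypotheses are verified.

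Both hypotheses can be met with one well-chosen subsequence. I would take $n_r:=P_{r,1}P_{r,2}\cdots P_{r,k}$, where for each $r$ the $P_{r,j}$ are distinct primes chosen --- by the Chinese Remainder Theorem together with Dirichlet's theorem on primes in progressions --- so that $P_{r,j}\equiv a_j\pmod{p_j}$ and $P_{r,j}\equiv 1\pmod{p_i}$ for $i\ne j$, with all primes occurring pairwise distinct across the $r$. Then $\mu_{n_r}^{1/(2A)}$ is a fixed constant times $(P_{r,1}\cdots P_{r,k})^{1/k}$, a $\mathbb{Q}$-basis element of the radical field $\mathbb{Q}(\{p^{1/k}:p\ \text{prime}\})$, and distinct $r$ give distinct basis elements; since every $\mu_n^{1/(2A)}$ is a rational multiple of exactly one basis element, the subsequence satisfies the $\pm1$-independence hypothesis of Theorem \ref{omega1}, as in the proof of Theorem \ref{omegak}. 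For the divergence condition \eqref{omega2}, the crucial point is that $C(n_r)$ involves no cancellation: in a factorization $n_r=m_1\cdots m_k$ each $m_j$ is a product of some of the $P_{r,i}$, and reduction modulo $p_j$ forces $P_{r,j}$ itself into slot $j$ whenever $\varepsilon_j(m_j)\ne0$; with exactly $k$ primes and $k$ slots only the diagonal factorization survives, whence $C(n_r)=\prod_{j=1}^{k}\varepsilon_j(P_{r,j})=1$ and $|b_{\mathbb{S}}(n_r)|\asymp n_r$. The general term of the series in \eqref{omega2} is then $\asymp n_r^{-1/2-1/(2k)}=(P_{r,1}\cdots P_{r,k})^{-1/2-1/(2k)}$, and summing it factors into a product of $k$ sums $\sum_P P^{-1/2-1/(2k)}$ over primes in a fixed progression, each divergent because $1/2+1/(2k)\le1$. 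Hence \eqref{omega2} holds, and Theorem \ref{omega1} yields $\varlimsup_{x\to\infty}\mathbb{S}(a_1,\dots,a_k;p_1,\dots,p_k;x)/x^{(3k-1)/(2k)}=+\infty$ together with the companion $\varliminf$ statement.

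The step I expect to be the main obstacle is this last one: producing an explicit infinite set on which the \emph{combined} dual coefficient $b_{\mathbb{S}}(n)$ provably does not collapse. For a single $\mathbb{D}^k$ the dual coefficient is a nonzero multiple of $n\,d_{\overline{\chi}_1,\dots,\overline{\chi}_k}(n)$ and evaluation at primes suffices; for the linear combination one must exclude cancellation among the character contributions, and the choice $n_r=P_{r,1}\cdots P_{r,k}$ with the prescribed congruences is precisely what forces a single surviving factorization. A lesser but necessary check is that the constant $c$ in $b_{\mathbb{S}}(n)=c\,n\,C(n)$ is genuinely nonzero --- it is, because the Gauss sums recombine through $\tau(\chi_j)\tau(\overline{\chi}_j)=-p_j$ into real quantities --- and that the trivial zeros of $\varphi_{\mathbb{S}}$ cancel the poles of $\Gamma^k(s/2)$, so that $Q_{\rho}(x)\equiv0$ and Definition \ref{def} applies without further ado.
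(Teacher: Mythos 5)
You are proposing a proof of a statement that the paper deliberately leaves as a conjecture: the authors remark that $\mathbb{S}$ is a linear combination of the sums $\mathbb{D}^k(x)$ and that they ``cannot appeal directly to Theorem \ref{omega1}''; no proof is given in the paper. Your strategy --- apply Theorem \ref{omega1} to the \emph{combined} Dirichlet series $\varphi_{\mathbb{S}}$, whose functional equation is obtained by summing \eqref{kfuncequa2} against characters via Lemmas \ref{25} and \ref{26} --- is the natural route to upgrading the conjecture to a theorem, and your bookkeeping is largely right: the dual coefficients are indeed $b_{\mathbb{S}}(n)=c\,n\,C(n)$ with $c=\sqrt{p_1\cdots p_k}/2^k>0$ (the Gauss sums recombine through $\tau(\chi_j)\tau(\overline{\chi}_j)=-p_j$), $Q_\rho\equiv 0$, and $\theta=(3k-1)/(2k)$.

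The gap is in the verification of \eqref{omega2}. Your claim that ``reduction modulo $p_j$ forces $P_{r,j}$ into slot $j$'' holds only when $1\not\equiv\pm a_j\pmod{p_j}$ for \emph{every} $j$: if $m_j=\prod_{i\in S_j}P_{r,i}$ with $j\notin S_j$, then $m_j\equiv 1\pmod{p_j}$, and $\varepsilon_j(m_j)\neq 0$ exactly when $a_j\equiv\pm1\pmod{p_j}$. In that case off-diagonal factorizations contribute and can cancel the diagonal one. Concretely, take $k=2$, $p_1=3$, $a_1=2$, $p_2=5$, $a_2=2$ (note $a_1\equiv-1\pmod 3$, which is unavoidable since every unit mod $3$ is $\pm1$). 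With your prescription $P_{r,1}\equiv 2\,(3),\,1\,(5)$ and $P_{r,2}\equiv 1\,(3),\,2\,(5)$, the four factorizations of $P_{r,1}P_{r,2}$ contribute $\varepsilon_1(1)\varepsilon_2(P_{r,1}P_{r,2})=-1$, $\varepsilon_1(P_{r,1})\varepsilon_2(P_{r,2})=+1$, and two zeros, so $C(n_r)=0$ identically on your subsequence and \eqref{omega2} fails. Falling back on primes does not help either: $C(P)=\sum_j\varepsilon_j(P)\prod_{i\neq j}\varepsilon_i(1)$ vanishes for every prime $P$ as soon as two of the $a_i$ satisfy $a_i\not\equiv\pm1\pmod{p_i}$. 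In the example above one can rescue the argument by instead imposing $P_{r,1}\equiv 1\pmod 3$, which gives $C(n_r)=-2$; but this shows the test sequence must be chosen adaptively as a function of the residues $a_j\bmod p_j$, and a uniform construction with a proof of non-cancellation of $C$ is exactly the missing ingredient --- it is the reason the statement is a conjecture rather than a theorem. (The remaining hypotheses --- the $\pm1$-independence of $\mu_{n_r}^{1/(2A)}$ via linear independence of $k$-th roots of distinct squarefree integers, and the entirety/decay conditions of Definition \ref{def} --- are fine.)
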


Note that Conjecture \ref{3tttt} is the special case $k=2$ of Conjecture \ref{3ttttt}.

Similarly, we can use \eqref{a1} and Theorem \ref{bigO} to obtain an upper bound for the order of $\mathbb{S}(a_1,a_2,\dots, a_k;p_1,p_2,\dots, p_k;x).$

\begin{theorem}\label{SbigOO} For every $\epsilon>0$, as $x\to\infty$,
\begin{equation}\label{SbigO}
 \mathbb{S}(a_1,a_2,\dots, a_k;p_1,p_2,\dots, p_k;x)=O\left(x^{2k/(k+1)+\epsilon}\right).
 \end{equation}
 \end{theorem}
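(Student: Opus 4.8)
The plan is to reduce the estimate to a single Dirichlet-series sum and then apply the Chandrasekharan--Narasimhan ``big O'' theorem, Theorem \ref{bigO}, exactly as in the case $k=2$ treated in Theorems \ref{3ttt} and \ref{3tt}. First I would use \eqref{a1} with $\rho=0$: it exhibits $\mathbb{S}(a_1,\dots,a_k;p_1,\dots,p_k;x)$ as a finite, $x$-independent linear combination of sums of the form $\mathbb{D}^k(x)=\sideset{}{'}\sum_{n\leq x}n\,d_{\chi_1,\dots,\chi_k}(n)$, one for each of the finitely many choices of odd characters $\chi_j\bmod p_j$ that occur. Hence it suffices to prove $\mathbb{D}^k(x)=O\big(x^{2k/(k+1)+\epsilon}\big)$ for each such choice.

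Next I would feed the functional equation \eqref{kfuncequa2}, after the substitution $s\mapsto s/2$ used in the proof of Theorem \ref{omegak}, into the framework of Definition \ref{def}. This produces the parameters $A=\tf12 k$ and $\delta=3$, together with $a(n)=n\,d_{\chi_1,\dots,\chi_k}(n)$, $\lambda_n=\mu_n$ a fixed positive multiple of $n$, and $b(n)$ a fixed multiple of $n\,d_{\overline{\chi}_1,\dots,\overline{\chi}_k}(n)$; the Dirichlet series $\varphi(s)$ equals, up to a nonvanishing entire factor, $\prod_{j=1}^{k}L(s-1,\chi_j)$, which is entire and vanishes at $s=0$ because each $\chi_j$ is odd, so that $Q(x)=0$. (The positive constant rescaling of $x$ hidden in $\lambda_n=cn$ does not affect any exponent.)

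Then I would carry out the one-parameter optimization in Theorem \ref{bigO}. Since $|b(n)|\ll n\,d_k(n)\ll n^{1+\epsilon}$, the series $\sum_{n}|b(n)|\mu_n^{-\beta}$ converges with $\beta=2+\epsilon$, so \eqref{bigO3} gives $u=\tf{k-1}{2k}+\epsilon$, and the first exponent in \eqref{bigO1} becomes $\tf{3k-1}{2k}+\big(\tf{k-1}{2}+k\epsilon\big)\eta$. For the second term in \eqref{bigO1} I would use only the trivial bound $|a(n)|\leq n\,d_k(n)\ll n^{1+\epsilon}$ together with the bounded gaps of $\lambda_n$ and the interval length $x^{\prime}-x=O\big(x^{1-\eta-1/k}\big)$ coming from \eqref{bigO2}, which yields $\sum_{x<\lambda_n\leq x^{\prime}}|a(n)|\ll x^{2-\eta-1/k+\epsilon}$, valid precisely when $1-\eta-1/k\geq0$. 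Balancing the two exponents, i.e.\ solving $\tf{3k-1}{2k}+\tf{k-1}{2}\eta=2-\tf1k-\eta$, gives the optimal choice $\eta=\tf{k-1}{k(k+1)}$, which indeed lies in $[0,1-1/k]$, and for it both exponents equal $2-\tf1k-\eta=\tf{2k}{k+1}$; after absorbing the powers of $\epsilon$, this is \eqref{SbigO}.

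I do not expect a serious obstacle here: the analytic input---bounding $d_k(n)$ trivially, choosing $\beta$, and the linear optimization in $\eta$---is routine and literally mirrors the $k=2$ computation of Theorems \ref{3ttt} and \ref{3tt}. The only point requiring care is the bookkeeping: tracking the substitution $s\mapsto s/2$, the accompanying constant rescaling of $x$, and the verification that $Q(x)=0$, so that the correct values $A=\tf12 k$, $\delta=3$ are fed into \eqref{bigO1} and \eqref{bigO3}.
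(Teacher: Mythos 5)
Your proposal is correct and follows essentially the same route as the paper: reduce via \eqref{a1} to the character divisor sums, feed the functional equation \eqref{kfuncequa2} (with $s\mapsto s/2$, so $A=\tf12 k$, $\delta=3$, $Q(x)=0$) into Theorem \ref{bigO} with $\beta=2+\epsilon$, and optimize $\eta=\tf{k-1}{k(k+1)}$ to get the exponent $\tf{2k}{k+1}$. The only cosmetic difference is that you bound the short-interval sum using the pointwise estimate $d_k(n)\ll n^{\epsilon}$, whereas the paper uses the average-order formula \eqref{dkO} plus partial summation; both give the same exponent up to $\epsilon$.
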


\begin{proof}
Note that $\beta=2+\epsilon$ for each $\epsilon>0$.  Then, by \eqref{bigO3},
$$u=2+\epsilon-\df32-\df{1}{2k}=\df12+\epsilon-\df{1}{2k}.$$
Next,  by \eqref{bigO1},  we need to calculate
\begin{equation}\label{2ea}
\df12\delta-\df{1}{4A}+2Au\eta=\df32-\df{1}{2k}+k\left(\df12+\epsilon-\df{1}{2k}\right)\eta=\df32-\df{1}{2k}+\df{k-1}{2}\eta +k\epsilon\eta,
\end{equation}
where $\eta$ is a non-negative number to be determined.

Let $d_k(n)$ denote the number of ways $n$ can be written as a product of $k$ factors.  Then, by \eqref{d(n)bigO} and induction on $k$,
\begin{equation}\label{dkO}
{\sum_{n\leq x}}^{\prime}d_k(n)
=xP_{k-1}(\log x)+O(x^{(k-1)/(k+1)}\log^{k-1}x),
\end{equation}
where  $k\geq 2$ and $P_r(\log x)$ is a polynomial of degree $r$ in
$\log x$.  (See also \cite[p.~133, Equation (10.10)]{annals2}).
Next, use \eqref{dkO} and  partial summation to deduce that
\begin{equation}\label{3ea}
|\sum_{x< \mu_n\leq x+O(x^{1-1/k-\eta})}nd_{\chi_1,\chi_2,\dots,\chi_k}(n)|
\leq \sum_{x< \mu_n\leq  x+O(x^{1-1/k-\eta})}nd_k(n)
=O(x^{2-1/k-\eta}\log^{k-1} x).
\end{equation}
Appealing to Theorem \ref{bigO}, we should find the optimal value of $\eta$ by equating the powers in \eqref{2ea} and \eqref{3ea}.  Thus, we should solve
$$2-\df{1}{k}-\eta=\df32-\df{1}{2k}+\df{k-1}{2}\eta.$$
Hence,
$$\eta=\df{k-1}{k(k+1)},$$
and so the optimal power is, for every $\epsilon>0$,
$$2-\df{k-1}{k(k+1)}+\epsilon=\df{2k}{k+1}+\epsilon.$$
This completes the proof of \eqref{SbigO}.
\end{proof}

Note that if $k=2$, \eqref{SbigO} reduces to Theorem \ref{3tt}.
Note also that
\begin{equation*}\label{kk}
\df{2(k+1)}{k+2}-\df{2k}{k+1}=\df{2}{(k+1)(k+2)}
\end{equation*}
is the difference in the exponents of \eqref{SbigO} for successive values of $k$. Thus, increasing the number of $\sin$'s by 1 in $\mathbb{S}(a_1,a_2,\dots, a_k;p_1,p_2,\dots, p_k;x)$ increases the upper bound for the power in the error term by a ``small'' amount, i.e., $O(1/k^2)$.

Suppose that $$ \df{a_j}{p_j}=\df14,\, 1\leq j\leq k,$$
so that the terms are equal to 0 if one or more of  the $n_j, 1\leq j\leq k$ are even. For odd $n_j$, let $n_j=2m_j+1,\, 1\leq j\leq k.$  Then,
\begin{align*}
&\sideset{}{'}\sum_{1\le n_1,n_2, \dots, n_k\le x} n_1 n_2\cdots n_k\,\sin(2\pi n_1a_1/p_1)\sin(2\pi n_2a_2/p_2)\cdots \sin(2\pi n_ka_k/p_k)\\
=&\sideset{}{'}\sum_{1\le 2m_1+1,2m_2+1,\dots, 2m_k+1\le x}(2m_1+1)(2m_2+1)\cdots(2m_k+1) (-1)^{m_1+m_2+\cdots+m_k}\\
=&\sideset{}{'}\sum_{1\le m_1,m_2,\dots, m_k\le x}(2m_1+1)(2m_2+1)\cdots(2m_k+1) (-1)^{m_1+m_2+\cdots+m_k},
\end{align*}
after replacing $x$ by $2x+1$.
Thus, Theorem \ref{theorem2} gives an identity for the weighted sum of products of positive odd lattice points $ 2m_1+1,2m_2+1,\dots, 2m_k+1$ in $k$-dimensional space weighted by $(-1)^{m_1+m_2+\cdots+m_k}$.  However, instead of applying Theorem \ref{theorem2} directly, if we apply Equation \eqref{a1} instead, we obtain a ``Big O'' bound for this sum weighted by $(-1)^{m_1+m_2+\cdots+m_k}$. Recall that we addressed the case $k=2$ earlier.

     Finding a representation for a sum of $k$ $\cos$-functions appears to be enormously complicated.
    Similarly, finding a representation for a sum with a mixture of $\sin$'s and $\cos$'s appears also to be extremely complicated.  To make any progress, it would appear that the numbers of $\sin$'s and $\cos$'s should be equal.

\end{document}